\documentclass[10pt]{amsart}%
\usepackage{amsmath}
\usepackage{amsfonts}
\usepackage{amsthm}
\usepackage{amssymb}
\usepackage{graphicx}%
\setcounter{MaxMatrixCols}{30}
\newtheorem{theorem}{Theorem}
\newtheorem*{theorem*}{Theorem}
\newtheorem*{acknowledgement*}{Acknowledgement}
\newtheorem*{definition*}{Definition}

\newtheorem{corollary}[theorem]{Corollary}

\newtheorem{lemma}[theorem]{Lemma}

\newtheorem{proposition}[theorem]{Proposition}
\newtheorem{remark}[theorem]{Remark}

\newcommand{\RR}[1]{{\mathbb{R}}^{#1}}
\newcommand{\pd}[2]{\frac{\partial #1}{\partial#2}}

\newcommand{\pdt}[0]{\frac{\partial}{\partial t}}

\newcommand{\mc}[1]{{\mathcal{#1}}}
\newcommand{\gt}[0]{\tilde{g}}

\newcommand{\delb}[0]{\overline{\nabla}}

\newcommand{\Gamb}[0]{\overline{\Gamma}}

\newcommand{\dt}[1]{\frac{\partial^{#1}}{\partial t^{ #1}}}

\newcommand{\Rc}[0]{\operatorname{Rc}}
\newcommand{\Rm}[0]{\operatorname{Rm}}

\newcommand{\R}[2]{R^{(#1, #2)}}
\newcommand{\A}[2]{A^{(#1, #2)}}
\newcommand{\T}[2]{T^{(#1, #2)}}
\newcommand{\h}[2]{h^{(#1, #2)}}
\newcommand{\g}[2]{g^{(#1, #2)}}

\newcommand{\dfn}[0]{\doteqdot}
\newcommand{\lt}[0]{\preceq}
\newcommand{\NN}[0]{\mathbb{N}}
\newcommand{\ZZ}[0]{\mathbb{Z}}

\title[Time-analyticity of the Ricci flow]{Time-analyticity of solutions to the Ricci flow}

\author{Brett Kotschwar}
\address{Arizona State University, Tempe, AZ, USA}
\email{kotschwar@asu.edu}

\thanks{The author was supported in part by NSF grant DMS-1160613.}

\date{March 2012}

\begin{document}
 \begin{abstract}
  In this paper, we prove that if $g(t)$ is a smooth, complete solution to the Ricci flow of uniformly bounded curvature on 
$M\times[0, \Omega]$,  
 then the correspondence $t\mapsto g(t)$ is real-analytic at each $t_0\in (0, \Omega)$.  The analyticity
is a consequence of classical Bernstein-type estimates on the temporal and spatial derivatives of the curvature tensor,
which we further use to show that, under the above global hypotheses, for any $x_0\in M$ and 
$t_0\in (0, \Omega)$, there exist local coordinates $x = x^i$ on a neighborhood $U\subset M$ of $x_0$ in which the representation
$g_{ij}(x, t)$ of the metric is real-analytic in both $x$ and $t$ on some cylinder $U\times (t_0 -\epsilon, t_0 + \epsilon)$.
\end{abstract}
\maketitle

\section{Introduction.} 
Let $M$ be a smooth $n$-dimensional manifold and $g_0$ a Riemannian metric on $M$.  We will consider solutions
$g(t)$ to the Ricci flow
\begin{equation}\label{eq:rf}
  \pdt g = -2\Rc(g), \quad g(0)= g_0,
\end{equation}
on $M \times [0, \Omega]$. 

When $M$ is compact, it is an old result of Bando \cite{Bando} that $(M, g(t_0))$ is a real-analytic manifold for $t_0\in (0, \Omega]$.  His argument extends, essentially without change, 
to non-compact $M$, provided the solution $g(t)$ is complete and of uniformly bounded curvature (cf. \cite{RFV2P2}),
and, with some modification, to \emph{any} smooth solution (see \cite{KotschwarLocalAnalyticity}). 
This is not surprising, given the close analogy of the Ricci flow 
to the linear heat equation, whose solutions possess instantaneous real-analyticity in the spatial variables
 as a consequence of parabolic regularity.

Under suitable global assumptions, solutions to the heat equation will also possess instantaneous analyticity in time,
and it is a natural to ask whether solutions to the Ricci flow share this property.  In this paper we  prove
that they do so, at least in the class of complete solutions of uniformly bounded curvature. 
\begin{theorem}\label{thm:timeanalyticity}
  Suppose $(M, g_0)$ is complete and $g(t)$ is a smooth solution to 
  \eqref{eq:rf} satisfying $\sup_{M\times[0, \Omega]}|\Rm(x, t)| \leq M_0$.
   Then the map $g: (0, \Omega) \to X$ is real-analytic, where
   $X$ denotes the Banach space $BC(T_2(M))$ equipped with the supremum norm $\|\cdot\|_{g(0)}$ relative to $g(0)$.

\end{theorem}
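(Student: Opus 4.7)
The strategy is to prove real-analyticity of $g:(0,\Omega)\to X$ at a fixed $t_0\in(0,\Omega)$ by establishing factorial bounds
\begin{equation*}
  \sup_{t\in[t_0-\delta,t_0+\delta]}\|\partial_t^k g(\cdot,t)\|_{g(0)}\leq C_0A^k k!, \qquad k\geq 0,
\end{equation*}
with $\delta,A,C_0$ depending only on $n$, $M_0$, and $\mathrm{dist}(t_0,\{0,\Omega\})$. Such bounds imply that the Banach-valued Taylor polynomials $T_N(t)\dfn\sum_{k=0}^N (t-t_0)^k\partial_t^k g(\cdot,t_0)/k!$ are Cauchy in $X$ on $|t-t_0|<1/A$, and the integral form of the remainder
\begin{equation*}
  g(\cdot,t)-T_N(t)=\int_{t_0}^t\frac{(t-s)^N}{N!}\,\partial_t^{N+1}g(\cdot,s)\,ds,
\end{equation*}
combined with the same bounds applied at nearby basepoints, forces $T_N(t)\to g(\cdot,t)$ in $X$ on that interval, yielding analyticity at $t_0$. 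Since $\partial_t g=-2\Rc$, the problem reduces to controlling the time derivatives $P_k\dfn\partial_t^k\Rm$ with the desired $k!$-growth.

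For this I would employ a Bernstein-type maximum-principle argument. Iterating Hamilton's identity $(\partial_t-\Delta)\Rm=\Rm*\Rm$ in time, together with the fact that $\partial_t\Gamma$ is (via a Bianchi identity) a contraction of $\nabla\Rm$, shows that each $P_k$ satisfies a parabolic system $(\partial_t-\Delta)P_k=\mathcal{F}_k$, where $\mathcal{F}_k$ is a universal polynomial in the lower-order time derivatives $P_j$ ($j<k$) and in spatial covariant derivatives $\nabla^i\Rm$ ($i\leq 2k$). Because $\mathcal{F}_k$ couples time and spatial derivatives, I would carry out the estimate jointly on all mixed derivatives $\nabla^iP_j$, introducing the weighted Bernstein quantity
\begin{equation*}
  \Phi_N=\sum_{i+2j\leq 2N}\alpha_{i,j}\,t^{i+2j}\,|\nabla^iP_j|^2,
\end{equation*}
and choosing the weights $\alpha_{i,j}$ so that the negative gradient terms in $\heat|\nabla^iP_j|^2$ absorb the positive quadratic cross-terms produced both by the nonlinearity in $\mathcal{F}_k$ and by the commutators $[\Delta,\nabla^i]$. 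This should yield a differential inequality $\heat\Phi_N\leq C(\Phi_N+\Phi_N^2)$ with $C$ independent of $N$. The parabolic maximum principle---extended to the complete, noncompact $M$ via standard spatial cutoffs, using the uniform curvature bound---then produces uniform pointwise bounds on $|\nabla^iP_j|$ with the correct factorial scaling, and specializing $i=0$ recovers the targeted bound on $P_k$.

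The main obstacle is the combinatorial bookkeeping required to preserve the $k!$-growth through this induction. The Leibniz expansion of $\partial_t^{k-1}(\Rm*\Rm)$ in $\mathcal{F}_k$ and the Weitzenb\"ock formula for $\heat|\nabla^iP_j|^2$ both generate sums with $\binom{k}{j}$- and $\binom{i}{m}$-type coefficients that, if estimated crudely, swamp the factorial scaling. Tuning the weights $\alpha_{i,j}$ (equivalently, a sharp comparison function for the maximum principle) so that these combinatorial factors are absorbed into a single geometric constant $B^{i+2j}$ uniformly in the differentiation order, is the crux of the argument; once this is accomplished, the Taylor-series construction of the first paragraph yields the analyticity of $g:(0,\Omega)\to X$.
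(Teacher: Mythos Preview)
Your strategy is the same as the paper's: build a weighted Bernstein sum $\Phi_N$ over all mixed derivatives $\nabla^{(k)}\partial_t^l R$, derive a heat-type differential inequality, and close by induction on $N$ via the maximum principle. However, you explicitly defer ``the crux''---the choice of weights and the absorption argument---without identifying the two specific devices that make it go through, and both are essential rather than routine.

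First, the paper takes $\alpha_{k,l} = a_{k,l}^2 = ((k-2)!(l-2)!)^{-2}$ with a built-in \emph{factorial lag} (attributed to Lax via Kinderlehrer--Nirenberg). This is exactly what allows the Leibniz combinatorics to close: the identity $\sum_{p=0}^m [p]_2^{-1}[m-p]_2^{-1} \leq C[m]_2^{-1}$ lets product estimates on derivatives of $U\ast V$ inherit the same growth as those on $U$ and $V$ (the paper's Lemma on weighted product sums). Without the lag, the binomial sums you mention do swamp the factorial scaling, and no amount of tuning a geometric constant $B^{i+2j}$ will fix this.

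Second, and more structurally, your claim that ``the negative gradient terms in $\heat|\nabla^iP_j|^2$ absorb the positive quadratic cross-terms'' fails outright for pure time derivatives: when $k=0$ there is \emph{no} negative gradient term at all, yet differentiating the weight $t^{k+2l}$ produces a bad $+2l\phi_{0,l}/t$ contribution. The paper's remedy is to use the PDE itself, writing $\partial_t R^{(0,l-1)} = \Delta R^{(0,l-1)} + Q_{0,l-1}$ to trade one time derivative for two spatial ones, thereby pushing $\phi_{k,l}$ (for $k<\sqrt{l}$) onto $\psi_{k+2,l-1}$, which \emph{does} get absorbed. This case split on $k$ versus $\sqrt{l}$ is the heart of the evolution-inequality estimate and is not visible in your outline.
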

We suspect that the general form of the theorem, namely, a conclusion that is \emph{interior} in time
from assumptions that are \emph{global} in space and time, 
is probably as general as might be hoped, as local analyticity in space and time is false even for the linear heat equation.  
We do not know whether the bound on curvature is optimal, although, with the assumption
of completeness, it essentially describes  the most general class of solutions for which short-time existence  and 
forwards and backwards uniqueness are known to hold (cf. \cite{Hamilton3D}, \cite{Shi}, \cite{ChenZhu}, \cite{KotschwarBackwardsUniqueness}, \cite{KotschwarUniqueness}).
This theorem provides an alternative and unified proof of the following unique-continuation
property of solutions to the Ricci flow, which follows from the combined uniqueness 
results in \cite{Hamilton3D}, \cite{ChenZhu}, and \cite{KotschwarBackwardsUniqueness}. 

\begin{corollary}\label{cor:uc}
 Suppose $g(t)$ and $\gt(t)$ are smooth, complete solutions to \eqref{eq:rf} of uniformly bounded curvature 
on $M\times[0, \Omega]$. If $g(t_0) \equiv \gt(t_0)$ for some $t_0\in (0, \Omega)$,
then $g(t) \equiv \gt(t)$ for all $t\in [0, \Omega]$.
\end{corollary}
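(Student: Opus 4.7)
My plan is to combine Theorem~\ref{thm:timeanalyticity} with a standard open-and-closed argument on the set $S \dfn \{\,t\in(0,\Omega) : g(t) = \gt(t)\,\}$.  By hypothesis $t_0 \in S$, and $S$ is closed in $(0,\Omega)$ by continuity of $g$ and $\gt$; the substantive task is to show that $S$ is also open in $(0,\Omega)$, after which connectedness forces $S = (0,\Omega)$, and continuity of the two solutions at the endpoints extends the equality to all of $[0,\Omega]$.

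To establish openness, I would fix $t_1 \in S$ and verify that $\partial_t^k g(t_1) = \partial_t^k \gt(t_1)$ in $X$ for every $k \geq 0$.  The base case is the hypothesis $g(t_1) \equiv \gt(t_1)$.  For the inductive step, the key observation is that, by differentiating the equation $\pdt g = -2\Rc(g)$ in $t$ repeatedly and substituting the flow equation back at each stage, one can express $\partial_t^k g(t_1)$ as a universal polynomial in the components of $g(t_1)$, $g^{-1}(t_1)$, and their iterated spatial partial derivatives in any fixed coordinate chart, with no explicit $t$-dependence.  Since $g(t_1)$ and $\gt(t_1)$ coincide as smooth tensor fields on $M$, all such spatial data agree pointwise, and the higher time derivatives must therefore agree as well.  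Theorem~\ref{thm:timeanalyticity} then produces, for both $g$ and $\gt$, convergent $X$-valued Taylor series centered at $t_1$; these series share all coefficients and a common positive radius of convergence, so $g \equiv \gt$ on a neighborhood of $t_1$.

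The main obstacle I anticipate is the bookkeeping for the inductive claim about $\partial_t^k g(t_1)$.  The cleanest route is to work pointwise on $M$ in a fixed coordinate chart, where $\pdt$ commutes with coordinate derivatives: successive differentiation of the scalar system $\pdt g_{ij} = -2 R_{ij}$ in $t$, followed by substitution of $\pdt g = -2\Rc(g)$ wherever a time derivative of $g$ appears, yields at each stage a purely spatial polynomial expression in $g$ and $g^{-1}$ and their spatial derivatives of bounded order.  A clean formulation of this (perhaps isolated as a small lemma) is the one real technical ingredient; the passage from pointwise equality on $M$ to equality in $X$, and the open-and-closed argument itself, are then routine.
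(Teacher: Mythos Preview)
Your argument is correct and is precisely the intended one: the paper offers no separate proof of the corollary, simply presenting it as an immediate consequence of Theorem~\ref{thm:timeanalyticity}, and the open--closed argument you outline (matching all time-derivatives at $t_1$ via the flow equation, then invoking analyticity) is the standard way to extract unique continuation from real-analyticity of $t\mapsto g(t)$. The only point worth noting explicitly is that $g$ and $\gt$ are a priori analytic as maps into $X$ equipped with norms $\|\cdot\|_{g(0)}$ and $\|\cdot\|_{\gt(0)}$ respectively, but the uniform curvature bounds make $g(0)$, $g(t_0)=\gt(t_0)$, and $\gt(0)$ all uniformly equivalent, so this causes no difficulty.
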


The question of time-analyticity for solutions to parabolic equations is an old and well-studied problem,
with many deep and general results, typically attained by way of either semigroup methods
(see, e.g., \cite{Yosida}, \cite{Komatsu2}, \cite{Massey}, \cite{Lunardi}) or $L^2$-estimates
(see, e.g., \cite{KinderlehrerNirenberg}, \cite{Komatsu}). However, as the Ricci flow is not strictly parabolic, the existing
theory provides no automatic guarantee of the time-analyticity of its solutions. Moreover, although, relative to choice of 
a fixed background metric $\bar{g}$ (or connection), one can associate to a solution $g(t)$ 
of the Ricci flow a solution $h_{\bar{g}}(t)$ to the strictly parabolic
Ricci-DeTurck flow \cite{DeTurck} for which $g(t) = \phi^{*}_th_{\bar{g}}(t)$ for some smooth family $\phi_t \in \operatorname{Diff}(M)$,  
it appears to be somewhat problematic, at the very least, to parlay 
any statement of analyticity for $h_{\bar{g}}(t)$ into a \emph{tensorial}
statement of analyticity for $g(t)$ of the sort in Theorem \ref{thm:timeanalyticity}.
Our approach, much the same as in \cite{Bando}, is to establish
 the time-analyticity of $g(t)$ by way of direct and rather classical Bernstein-type estimates on the curvature tensor.
The nature of our proof requires us to establish bounds
on all mixed covariant and temporal derivatives of the curvature tensor, and thus we obtain the following
estimates which are stronger than what is necessary to imply the time-analyticity of $g(t)$ but may  be themselves of some independent interest.
\begin{theorem}\label{thm:mainest}
  Given $M_0$ and $\Omega > 0$, there exist constants $K_0$ and $L_0$ depending
  only on $n$, $M_0$, and $\Omega^* \dfn \max\{\Omega, 1\}$, such that the curvature tensor
  of any solution $g(t)$ to \eqref{eq:rf} satisfying the assumptions of
  Theorem \ref{thm:timeanalyticity} satisfies 
 \begin{equation}\label{eq:mainest}
   \sup_{M\times [0, \Omega]} 
    t^{k/2 + l}\left|\nabla^{(k)}\dt{l}\Rm\right| \leq K_0L_0^{k/2 + l}(k+l)!
  \end{equation}
for all $k$, $l\in \NN_0 \dfn \NN\cup\{0\}$.
\end{theorem}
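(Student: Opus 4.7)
The approach is a generalized Bernstein--Shi--Bando induction on the combined derivative order $N = k + l$, coupled with careful combinatorial bookkeeping. The base case $k = l = 0$ is simply the hypothesis $\sup\norm{\Rm} \le M_0$, and the case $l = 0$ for arbitrary $k$ reduces to Shi's classical derivative estimates, refined to record constants of the sharp form $K_0 L_0^{k/2} k!$---a form already present in Bando's treatment of the compact case. The new feature is the joint estimate for $k$ and $l$ both positive; the critical point is that a crude iteration of $\dt{1}\Rm = \Delta\Rm + \Rm \ast \Rm$ would produce only $(2l)!$ growth for pure temporal derivatives, giving Gevrey regularity of order $2$ in time, which is insufficient to imply analyticity.

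Under the flow, $\dt{1}\Rm = \Delta \Rm + \Rm\ast \Rm$ schematically, and since $\dt{1}\Gamma \sim \nabla\Rc$ the commutator satisfies $[\dt{1}, \nabla]T = \nabla\Rc \ast T$ for any tensor $T$. Iterating Leibniz's rule and this commutator identity yields, for each $(k, l)$, an evolution equation of the form
\begin{equation*}
  \heat\bigl(\nabla^{(k)} \dt{l} \Rm\bigr) = \sum_{\mathcal I_{k,l}} c_{k_1, l_1, k_2, l_2}^{\,k,l}\, \nabla^{(k_1)}\dt{l_1}\Rm \ast \nabla^{(k_2)}\dt{l_2}\Rm,
\end{equation*}
summed over an index set $\mathcal I_{k,l}$ with $k_i, l_i \ge 0$, $l_1 + l_2 \le l$, and total parabolic weight $k_1 + k_2 + 2(l_1 + l_2) \le k + 2l$, the coefficients $c^{\,k,l}$ being products of the binomial coefficients arising from the Leibniz rule.

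Fix $N$ and assume inductively that \eqref{eq:mainest} holds for all $(k', l')$ with $k' + l' < N$. For $(k, l)$ with $k + l = N$, set $\ph{k}{l} = \normsq{\nabla^{(k)}\dt{l}\Rm}$ and compute its evolution. Combining with suitable lower-order correction terms, following Shi's classical strategy, produces a quantity $\Q{k}{l}$ on $M \times [0, \Omega]$ satisfying an inequality schematically of the form
\begin{equation*}
  \heat \Q{k}{l} \le -2 t^{k+2l}\normsq{\nabla^{(k+1)}\dt{l}\Rm} + F_{k,l},
\end{equation*}
with $F_{k,l}$ bounded via the inductive hypothesis on the lower-order factors. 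The key combinatorial fact is that $\sum_{j=0}^{N} 1/\binom{N}{j}$ is bounded uniformly in $N$, which yields the convolutional factorial inequality $\sum_{N_1 + N_2 = N} N_1!\, N_2! \le C_0 N!$. Since the leading forcing terms take the form $\binom{k}{k_1}\binom{l}{l_1}\, \nabla^{(k_1)}\dt{l_1}\Rm \ast \nabla^{(k-k_1)}\dt{l-l_1}\Rm$ with $(k_1+l_1) + (k-k_1+l-l_1) = k+l$, substituting the IH and applying this inequality yields
\begin{equation*}
  \norm{F_{k,l}} \le C K_0^2 L_0^{k+2l}\, p(k+l)\,(k+l)!\, t^{-(k+2l)}
\end{equation*}
for some polynomial $p$. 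Applying the maximum principle---localized on non-compact $M$ via Shi-type cutoffs, made possible by the global curvature bound---closes the induction, with the polynomial $p(k+l)$ at each step absorbed by choosing $L_0$ sufficiently large.

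The principal difficulty is the combinatorial bookkeeping. A cavalier treatment of the recursion $a_N \lesssim \sum a_{N_1}a_{N_2}$ would compound super-factorially; obtaining the sharp $(k+l)!$ growth is the analytic heart of the theorem and rests jointly on the factorial convolution inequality and on a choice of lower-order correction terms in $\Q{k}{l}$ that allows ``borderline'' cross-derivative error terms in $F_{k,l}$ to be absorbed into the good negative term $-2t^{k+2l}\normsq{\nabla^{(k+1)}\dt{l}\Rm}$ rather than charged to the inductive cost. A secondary difficulty is the coupling between spatial and temporal derivatives through $[\dt{1}, \nabla]$, which generates subleading terms of strictly lower parabolic weight that must nonetheless be tracked explicitly in the combinatorial sums. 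The non-compactness of $M$ introduces no essential novelty beyond Shi's cutoff construction, as all relevant quantities of lower order are already globally bounded.
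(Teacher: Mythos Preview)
Your outline captures the broad strategy, but two gaps prevent the induction from closing as described.

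First, the combinatorics. The Leibniz expansion produces coefficients $\binom{k}{k_1}\binom{l}{l_1}$, and substituting the inductive hypothesis with $(k_i+l_i)!$ gives, by Vandermonde,
\[
\sum_{k_1=0}^{k}\sum_{l_1=0}^{l}\binom{k}{k_1}\binom{l}{l_1}(k_1+l_1)!\,(k-k_1+l-l_1)!
\;=\; \sum_{m=0}^{N}\binom{N}{m}m!\,(N-m)! \;=\; (N+1)!.
\]
The convolution inequality you invoke bounds $\sum_m m!\,(N-m)!$ \emph{without} the binomial weights; with them, one picks up an honest factor of $N+1$. This cannot be absorbed into a fixed $L_0$, since for the leading reaction terms the power of $L_0$ on both sides already matches (there is no parabolic weight deficit to exploit). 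The paper repairs this via the ``factorial lag'' device of Lax and Kinderlehrer--Nirenberg: one proves instead the variant with $(k-2)!\,(l-2)!$ in place of $(k+l)!$, for which the corresponding weighted product-sum is bounded by a constant independent of $k$ and $l$, so nothing accumulates through the induction.

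Second, and more structurally, the term $(k+2l)\,t^{-1}\phi_{k,l}$ arising when $\partial_t$ hits the weight $t^{k+2l}$ is not addressed. For pure spatial derivatives it is absorbed by the good term $-2[k-2]^2\theta^{-1}\phi_{k,l}$ contributed by the Laplacian acting on $\phi_{k-1,l}$, but when $k < \sqrt{l}$ the coefficient $k+2l$ dominates $[k-2]^2$, and for $k=0$ there is no corresponding good term at all. Your reference to ``lower-order correction terms'' does not meet this; it is a structural obstacle, not a borderline error. The paper's fix is, in the small-$k$ regime, to trade one time derivative for two spatial ones via $R^{(k,l)} = \nabla^{(k)}\Delta R^{(0,l-1)} + \nabla^{(k)}Q_{0,l-1}$, bounding $\phi_{k,l}$ by a multiple of $\psi_{k+2,l-1}$ plus controllable error; the point is that $\psi_{k+2,l-1}$ \emph{does} appear among the good terms. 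A further, smaller issue you do not mention: iterated time-differentiation of $\Delta$ spawns growing strings of factors of $g^{-1}$, which the paper controls by a separate inductive bound on the weighted sum of the derivatives $|\nabla^{(k)}\partial_t^{\,l} g^{-1}|^2$.
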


Above, and in what follows, $\nabla \dfn \nabla_{g(t)}$ denotes the Levi-Civita connection of $g(t)$,
$|\cdot| = |\cdot|_{g(t)}$ the norms induced by $g(t)$ on the tensor bundles $T^k_l(M)$, and
$\nabla^{(k)}T$ the $k$-fold covariant derivative of a tensor $T$. We reserve the designation
``the curvature tensor of $g(t)$'' for the $(3, 1)$-curvature tensor, i.e., $R_{ijk}^l$,
which we will often simply denote by $R$. Since $\pdt g_{ij} = -2R^{l}_{lij}$, the above estimates
immediately imply estimates of the same general form for the derivatives of $g(t)$; in view of the assumption of bounded curvature,
the metrics $g(t)$, $t\in [0,\Omega]$, will all be uniformly equivalent, so, at the expense of further enlarging the constants, 
we can replace the norms in these estimates
by those induced by $g(0)$.  The time-analyticity of $g(t)$ for $t > 0$, in the sense of Theorem \ref{thm:timeanalyticity}, then follows. 

\subsection{Remarks on the estimates}

The estimates of Theorem \ref{thm:mainest} generalize the ubiquitous spatial derivative estimates
due originally to Bando \cite{Bando} and Shi \cite{Shi},  
which take the form
\[
      t^{k/2}|\nabla^{(k)} R|\leq C(n, k, M_0, \Omega).
\]
Other variants appear, for example, in \cite{HamiltonSingularities},  \cite{LuTian}, \cite{ShermanWeinkove}, 
and \cite{KotschwarLocalAnalyticity}. The essential
new feature in \eqref{eq:mainest} is the precise dependency of the factorial on the
right-hand side of the equation on the the order, $l$, of the time-derivative. From \cite{Bando}
it is known that the constant 
$C$ can be put into the form $C^{\prime}L^k k!$ for some constants $C^{\prime}$ and $L$ 
depending only on $n$, $M_0$, and $\Omega$.  While
this rate of growth in the order $k$ is sufficient to establish the spatial analyticity of solutions for $t > 0$, any combination
of estimates of this form could, at best, imply an estimate of the form
\[
    t^{l}\left|\frac{\partial^lR}{\partial t^l}\right|\leq CL^{l}(2l)!
\]
on the time-derivatives of $R$. This dependency on $l$ implies that $g(t)$ is of second Gevrey class for $t > 0$, 
but is insufficient to ensure that it is analytic in $t$.

To improve the dependency of the estimates on the order of the time-derivative,
we must first make some adjustments to the Bernstein technique in \cite{Bando};
these high-level alterations are contained in Section \ref{sec:phiev}. Most of the rest
of the issues we need to address then arise more-or-less from the greater complexity of the computations we need to carry out.
 The evolution equation for $\nabla^{(k)}\dt{l}R$ effectively 
compels us to include all mixed derivatives $\nabla^{k^\prime}\dt{l^{\prime}}R$ with $k^{\prime}+l^{\prime}\leq k+l$ in our basic quantity, 
and, generally speaking, the nonlinearity of the equation proves to be somewhat more troublesome in this case than it is for the estimation
of purely spatial derivatives. For example, the repeated time-differentiation of expressions
 involving the inverse of the metric
increases not only the number of terms but the number and variety of factors in each term,
whereas (on account of the compatibility of the metric with the connection)
the iterated covariant differentiation of the same expressions typically only increases the number of terms. 

This threatens to make unmanageable some of the crucial commutator formulas (e.g., $\left[\dt{l}, \Delta\right]$)
that we must consider, but we are able to neutralize a large part of this potential complication by introducing an induction scheme
and treating the inverse independently of $g$ at each stage.  Then, from the identity
$g^{ik}g_{kj} = \delta^i_j$ we can deduce explicit bounds on the derivatives of the inverse
from estimates established on the derivatives of the metric.
This allows us to avoid having to work with explicit expressions for iterated derivatives of the inverse of the metric,
although, conceivably, one could avoid the induction with an application of a suitable variation of Fa\`{a} di Bruno's formula.

We note that, although we have made no effort to work in any generality greater than what is demanded by Theorem \ref{thm:timeanalyticity},
our argument makes no essential use of the Ricci flow equation, and particularly
as it concerns the basic quantity $\Phi_N$, defined in Section \ref{sec:indarg}, and the 
high-level estimates we obtain on its evolution equation in Section \ref{sec:phiev}, it illustrates a general
technique which can be used as well to establish the time analyticity of other parabolic equations.
For example, it can be used to prove the analyticity of solutions to the mean-curvature flow
in Euclidean space (or in 
in an ambient manifold with a curvature tensor 
satisfying appropriate bounds on its covariant derivatives); we intend to detail this particular extension in a paper to follow. 

\subsection{Full analyticity of the solution in local coordinates}
Since Theorem \ref{thm:mainest} provides estimates on the covariant as well as the temporal derivatives  of the curvature tensor,
it is natural to ask whether we might also use them to obtain some statement of local space-time analyticity for the metric.
In the last section of the paper we will we use them to prove that, for any $0 < t_0 < \Omega$,
 there exist coordinates $x$ about any $x_0\in M$ in which the expression $g_{ij}(x, t)$ of the metric
is analytic in $x$ \emph{and} $t$ in some small interior neighborhood of $(x_0, t_0)$. In fact, we will
show that \emph{any} local coordinates $x$ in which the representation $g_{ij}(x, t_0)$ at some $t_0 > 0$
is analytic in space will do the trick if restricted to a sufficiently small neighborhood.
The precise statement is as follows. 
\begin{theorem}\label{thm:spacetimeanalyticity} Let $g(t)$ be a complete solution to \eqref{eq:rf}
satisfying the uniform curvature bound $|\Rm(x, t)|\leq M_0$ on $M\times [0, \Omega]$.  Let $t_0 \in (0, \Omega)$ and suppose that
$x: U \to \RR{n}$ are coordinates
on a neighborhood $U\subset M$ in which the expression $x\mapsto g_{ij}(x, t_0)$ of the metric $g(t_0)$ belongs to $C^{\omega}(U)$.
Then, given any $x_0 \in U$, there exists a neighborhood $V\subset U$ of $x_0$ and $0 < \epsilon < \min\{t_0, \Omega - t_0\}$ 
such that $g_{ij}$ belongs to $C^{\omega}(V\times(t_0-\epsilon, t_0 + \epsilon))$. 
\end{theorem}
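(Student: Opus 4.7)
The plan is to reduce joint analyticity at $(x_0, t_0)$ to the pointwise estimate
\[
\sup_{x \in V}\left|\partial_x^{\alpha}\partial_t^{l} g_{ij}(x, t_0)\right| \leq C M^{|\alpha|+l}(|\alpha|+l)!, \qquad \alpha \in \NN_0^{n},\ l \in \NN_0,
\]
on some small neighborhood $V$ of $x_0$. Such an estimate is sufficient: by Theorem \ref{thm:timeanalyticity} one has $g_{ij}(x, t) = \sum_{l \geq 0}\tfrac{(t-t_0)^l}{l!}\partial_t^l g_{ij}(x, t_0)$ uniformly on $V$ for $|t-t_0|$ small, and the estimate above expresses each coefficient $\partial_t^l g_{ij}(\cdot, t_0)$ as a Taylor series in $x$ whose radius of convergence about $x_0$ is uniform in $l$. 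The resulting double sum then converges absolutely on some polydisc about $(x_0, t_0)$ and, after rearrangement, is recognized as the joint Taylor series of $g_{ij}$ there.

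To derive the target estimate I would proceed in three steps. First, since $g_{ij}(\cdot, t_0) \in C^{\omega}(U)$, I would shrink to a closed ball $\bar V \subset U$ about $x_0$ on which each of $g_{ij}(\cdot, t_0)$, $g^{ij}(\cdot, t_0)$, and $\Gamma^{k}_{ij}(\cdot, t_0)$ satisfies a uniform analyticity bound $\sup_{\bar V}|\partial_x^{\beta} F| \leq K_F A^{|\beta|}\beta!$. Second, Theorem \ref{thm:mainest} supplies
\[
\left|\nabla^{(k)}\partial_t^{l}\Rm\right|_{g(t_0)} \leq K_0 L_0^{k/2+l}(k+l)!/t_0^{k/2+l}
\]
on $M$, which, together with the uniform $C^0$ control of $g(\cdot, t_0)^{\pm 1}$ on $\bar V$, converts to componentwise bounds of the same general form for the coordinate components of $\nabla^{(k)}\partial_t^{l}\Rm(\cdot, t_0)$. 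I would then pass to coordinate partial derivatives by iterating the identity $\partial_j T = \nabla_j T + \Gamma \ast T$, using the analyticity of $\Gamma(\cdot, t_0)$ to absorb the Christoffel factors at each step; by induction this yields $\sup_{\bar V}|\partial_x^{\alpha}\partial_t^{l}\Rm(\cdot, t_0)| \leq C_1 M_1^{|\alpha|+l}(|\alpha|+l)!$ on a perhaps further-shrunken ball. Third, the flow equation $\partial_t g_{ij} = -2 R_{ij}$ presents $\partial_t^l g_{ij}$ as (coordinate components of) a $g^{-1}$-contraction of $\partial_t^{l-1}\Rm$, delivering the required bound for $l \geq 1$; the case $l = 0$ is the hypothesis.

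The principal technical obstacle is ensuring that the passage from covariant to coordinate derivatives in the second step preserves the sharp $(|\alpha|+l)!$ growth rate, rather than producing a coarser one such as $(2|\alpha|+l)!$. The analyticity of the Christoffel symbols gives $\beta!$-growth for their spatial derivatives, and combining this with the $(k+l)!$-growth of the covariant derivatives of $\Rm$ via elementary inequalities like $\binom{k+l}{k}\leq 2^{k+l}$ and $k!\,l! \leq (k+l)!$ does produce a bound of the required form; but the combinatorics of the induction must be tracked with some care, either directly with explicit constants or, more conceptually, by means of a classical majorant-series argument adapted to these mixed covariant--temporal quantities.
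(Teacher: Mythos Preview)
Your approach is correct and follows a genuinely different route from the paper's. The paper establishes the estimate
\[
\sup_{V\times[t_0-\epsilon,\,t_0+\epsilon]}\left|\partial_t^{l}\partial^{(k)}g\right|_{g_E}\leq PQ^{k/2+l}(k+l)!
\]
directly on a full space-time cylinder, and for this it needs \emph{two} connection changes: first from $\nabla_{g(t)}$ to $\delb=\nabla_{g(t_0)}$, which requires an auxiliary proposition (an ODE-type argument bounding $\nabla^{(k)}(\nabla-\delb)$ uniformly on $[t_0-\epsilon,t_0+\epsilon]$), and then from $\delb$ to $\partial$ using the analyticity of $\bar g$ in the given chart. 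You instead work only at the single slice $t=t_0$, where $\nabla_{g(t_0)}=\delb$, so the first connection change collapses and you need only the second; you then invoke Theorem~\ref{thm:timeanalyticity} to expand in $t$ and assemble the double Taylor series. This is more economical in that it dispenses with the time-evolving connection-difference estimate altogether, at the modest cost of appealing to Theorem~\ref{thm:timeanalyticity} as a separate ingredient rather than reproving its content in coordinates. The paper's approach, by contrast, yields the coordinate estimate on an entire cylinder without that appeal.

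Your identification of the ``principal technical obstacle'' is accurate, and the resolution you sketch is essentially the content of the paper's connection-change proposition: writing the covariant estimate in the lagged form $P(l)\,Q^{k}(k-2)!$ with $P(l)=K'\,(L'/t_0)^{l}(l-2)!$, one shows by induction that passing from $\delb^{(k)}$ to $\partial^{(k)}$ changes only $Q$ to some $S=S(n,\tilde P,Q)$ independent of $P(l)$, hence of $l$, so the product structure $(k-2)!(l-2)!$ survives and recombines into $(k+l)!$. A majorant-series argument works equally well here.
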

We emphasize again that, as in the case of strictly parabolic equations,
we require global assumptions on the solution $g(t)$ despite the purely local conclusion of the theorem. 
In view of Bando's theorem \cite{Bando} (cf. Remark 13.32 of \cite{RFV2P2}), we have the following special case.
\begin{corollary}\label{cor:spacetimeanalyticity} Suppose $g(t)$ is a complete solution to \eqref{eq:rf} of uniformly bounded curvature
on $M\times [0, \Omega]$.  Then, for each $(x_0, t_0)\in M\times (0, \Omega)$,
there exist $r > 0$ and $0 < \epsilon < \min\{t_0, \Omega-t_0\}$  such that the expression, $g_{ij}$, of the metric
in $g(t_0)$-geodesic normal coordinates $x = (x^i)$ based at $x_0$, is real-analytic in the variables $x^i$ and $t$
in the open cylinder $B_{g(t_0)}(x_0, r)\times (t_0 -\epsilon, t_0 + \epsilon)$. 
\end{corollary}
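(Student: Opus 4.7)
The strategy is to establish the pointwise factorial estimate
\begin{equation*}
|\partial^\alpha \dt{l} g_{ij}(x_0, t_0)| \leq C\, D^{|\alpha| + l}\, (|\alpha| + l)!
\end{equation*}
for all coordinate multi-indices $\alpha$, all $l \in \NN_0$, and all $i, j$, with $C, D$ independent of $\alpha, l, i, j$. Such an estimate at the single point $(x_0, t_0)$ yields a jointly convergent Taylor series in $(x - x_0, t - t_0)$ on some polydisk, hence joint real-analyticity of $g_{ij}$ on a cylinder $V \times (t_0 - \epsilon, t_0 + \epsilon)$.

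The first ingredient is spatial analyticity input: by hypothesis, $g_{ij}(\cdot, t_0) \in C^\omega(U)$, and after restricting to a small coordinate ball $V \subset U$ about $x_0$ we have $|\partial^\beta g_{ij}(x, t_0)| \leq A B^{|\beta|}\beta!$ on $V$. Using $g^{ij}g_{jk} = \delta^i_k$ and the formula $\Gamma^k_{ij} = \tfrac{1}{2}g^{kl}(\partial_i g_{jl} + \partial_j g_{il} - \partial_l g_{ij})$, analogous bounds
$|\partial^\beta \Gamma^k_{ij}(x, t_0)| \leq A'B'^{|\beta|+1}(|\beta|+1)!$
follow (possibly after shrinking $V$ so that $\det g$ stays bounded below). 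The second ingredient is Theorem \ref{thm:mainest}, which, evaluated at fixed $t = t_0 > 0$, gives global bounds
$|\nabla^{(k)} \dt{l} \Rm(\cdot, t_0)|_{g(t_0)} \leq \tilde C\, \tilde L^{k/2 + l}(k + l)!$
for all $k, l \in \NN_0$. Since $g(t_0)$ is uniformly non-degenerate on $\bar V$, the same bounds (up to multiplicative constants) hold for the coordinate components of $\nabla^{(k)}\dt{l}R_{ij}$ at $x_0$.

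To combine these I iterate $\partial_i T_{jk} = \nabla_i T_{jk} + \Gamma * T$ to obtain, for any $(0,2)$-tensor $T$, a schematic expansion
\begin{equation*}
    \partial^\alpha T_{ij} = \sum \limits_{\substack{0 \leq j' \leq |\alpha|,\; s \geq 0 \\ j' + \sum_{r}(|\beta_r|+1) = |\alpha|}} C^{\alpha}_{j', s, \beta}\, \big(\partial^{\beta_1}\Gamma\big) * \cdots * \big(\partial^{\beta_s}\Gamma\big) * \nabla^{(j')} T_{ij},
\end{equation*}
with coefficients dominated by $|C^{\alpha}_{j', s, \beta}| \leq K^{|\alpha|}\,\tfrac{|\alpha|!}{j'!\prod_r(|\beta_r|+1)!}$ for some $K = K(n)$. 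Applying this with $T = \dt{l}R_{ij}(\cdot, t_0)$ and using the elementary inequality $\tfrac{|\alpha|!(j'+l)!}{j'!} \leq (|\alpha|+l)!$ (which holds because $j' + k \leq |\alpha| + k$ for every $k = 1, \ldots, l$), the multinomial factor, the factorial from $|\nabla^{(j')}\dt{l}R|$, and the factorials from the $|\partial^{\beta_r}\Gamma|$ factors collapse into a single $(|\alpha|+l)!$. The number of summands grows geometrically in $|\alpha|$ and can be absorbed into $D$, yielding
$|\partial^\alpha \dt{l} R_{ij}(x_0, t_0)| \leq C''\, D''^{|\alpha|+l}(|\alpha|+l)!$.
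The estimate on $\partial^\alpha \dt{l} g_{ij}$ then follows from $\dt{l}g_{ij} = -2\dt{l-1}R_{ij}$ for $l \geq 1$, combined with the $l = 0$ case handled directly by the analyticity input.

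The principal obstacle is the combinatorial bookkeeping required to identify precisely the coefficients $C^{\alpha}_{j', s, \beta}$ in the partial-to-covariant expansion and to control the number of summands. A naive, purely-partial-derivative iteration of the Ricci flow equation, treating $\partial_t g = -2\Rc$ as a second-order spatial identity, would encounter the fact that each time-differentiation introduces two extra spatial derivatives, producing a bound proportional to $(|\alpha|+2l)!$ — only Gevrey-$2$, not analytic. The sharp dependence on $(k+l)!$ in Theorem \ref{thm:mainest} for \emph{mixed} covariant-temporal derivatives is essential here; the real-analyticity of $g(\cdot, t_0)$ in $x$ then serves precisely to convert covariant derivatives into coordinate derivatives without disrupting this factorial growth.
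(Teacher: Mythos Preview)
Your strategy is sound and, in essence, a streamlined variant of the paper's route, but there is one genuine logical gap in the argument as you have written it. You assert that the pointwise bound
\[
  |\partial^{\alpha}\partial_t^{l} g_{ij}(x_0, t_0)| \leq C\,D^{|\alpha|+l}(|\alpha|+l)!
\]
at the \emph{single} point $(x_0, t_0)$ implies joint real-analyticity on a neighborhood. This is false: derivative bounds at one point only guarantee that the formal Taylor series converges, not that it converges to the function (recall $e^{-1/x^2}$). What you need instead is the same estimate \emph{uniformly} for $x$ ranging over a neighborhood $V$ of $x_0$ (still at the fixed time $t_0$). Your argument in fact delivers exactly this once you observe that the inputs---the analytic bounds on $\partial^\beta\Gamma(\cdot, t_0)$ and the covariant estimates $|\nabla^{(k)}\partial_t^l R|_{g(t_0)}$ from Theorem~\ref{thm:mainest}---are all uniform over $\bar V$. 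With the uniform slice estimate in hand, time-analyticity (Theorem~\ref{thm:timeanalyticity}) lets you write $g_{ij}(x,t) = \sum_l (t-t_0)^l \partial_t^l g_{ij}(x, t_0)/l!$ with coefficients real-analytic in $x$; your bound on the mixed derivatives then justifies the Fubini rearrangement into a jointly convergent power series equal to $g_{ij}$.

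As for the comparison: the paper derives the corollary from Theorem~\ref{thm:spacetimeanalyticity} together with Bando's theorem (which supplies the $C^\omega$-hypothesis on $g_{ij}(\cdot, t_0)$ in geodesic normal coordinates---this is not literally a ``hypothesis'' of the corollary, so you should cite it). The paper's proof of Theorem~\ref{thm:spacetimeanalyticity} obtains the derivative bounds uniformly on a full space-time cylinder $V\times[t_0-\epsilon, t_0+\epsilon]$, which requires first estimating the connection drift $G(t) = \nabla_{g(t)} - \nabla_{g(t_0)}$ via Proposition~\ref{prop:timeconndiff} and then applying the connection-change estimate (Proposition~\ref{prop:connchange}) twice. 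Your approach, by evaluating at $t = t_0$ where $\nabla = \delb$, bypasses Proposition~\ref{prop:timeconndiff} entirely and needs only the single connection change $\delb \leadsto \partial$; the price is that you get estimates only on the slice $\{t = t_0\}$ and must invoke time-analyticity separately to close the argument. This is a legitimate and somewhat more economical route. Note, however, that your claimed coefficient bound $|C^{\alpha}_{j',s,\beta}| \leq K^{|\alpha|}\,|\alpha|!/\bigl(j'!\prod_r(|\beta_r|+1)!\bigr)$ hides a nontrivial induction (the rank of the intermediate tensors grows with $j'$); the paper packages exactly this induction as Proposition~\ref{prop:connchange}, and you would be well served to appeal to it rather than asserting the combinatorial bound without proof.
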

\begin{remark}
An analogous statement holds true for local $g(t_0)$-harmonic coordinates, 
as it is a result of DeTurck-Kazdan \cite{DeTurckKazdan} that a metric whose representation is of class $C^{\omega}$ 
in some coordinates $x$ will also be of class $C^{\omega}$ in harmonic coordinates.
\end{remark}

\section{Notation and conventions}
\subsection{Derivatives}
For the rather detailed calculations that lie ahead, 
it will be convenient to have a shorthand to represent iterated space and time derivatives.
We will use the notation
\begin{equation}\label{eq:derconvention}
	  \T{k}{l} \dfn \nabla^{(k)}\dt{l} T,
\end{equation}
for a smooth family $T =T(t)$ of tensors on $M$. Thus, in particular, if $T$ is a family of $(a, b)$-tensors on $M$, 
$\T{k}{l}$ represents a family of $(k+a, b)$-tensors. Since the connection $\nabla$ depends on $t$ through $g(t)$, 
the operators $\pdt$ and $\nabla$ typically will not commute. So $\dt{m}\T{k}{l}\neq \T{k}{l+m}$ in general, however,
trivially,
\[
      \nabla^{(m)}\T{k}{l} = \T{k+m}{l},\quad\mbox{and}\quad \dt{m}\T{0}{l} = \T{0}{l+m},
\]
for all $k$, $l$, $m\in \NN_0$.

\subsection{Contracted products of tensors}
As our calculations below are to be motivated by an almost exclusively combinatorial interest, 
it will be useful also to introduce a convention to 
suppress as much of the internal algebraic structure of a given formula
as it is safe to ignore. By
the expression
\begin{equation}\label{eq:ltdef}
      U \lt c V_1\ast V_2\ast \cdots \ast V_J,
\end{equation}
for $c\in \NN_0$ and tensors $U$ and $V_i$, $i=1,\ldots, J$, 
we will mean that $U$ is the sum of no more than $c$ terms of \emph{simple} (i.e., \emph{non-metric}) 
contractions of $V_1\otimes V_2\otimes \cdots \otimes V_J$, and we extend
this notation in the obvious way to sums of such terms.  In particular, an expression
of the form
\begin{equation*}
    U \preceq \sum_{i=1}^N c_i V_{i, 1}\ast V_{i, 2}\ast\cdots \ast V_{i, J_i}
\end{equation*}
where $c_i \geq 0$, implies that $U$ satisfies
\[
    |U|\leq \sum_{i=1}^N c_i |V_{i, 1}||V_{i, 2}|\cdots |V_{i, J_i}|.
\]
\begin{remark} Note that our use of the asterisk notation differs in one important aspect from its typical 
usage in the Ricci flow literature (e.g., as in \cite{Bando}, \cite{ChowKnopf}, \cite{HamiltonSingularities})
in that we do not use it to conceal any metric contractions.  Indeed, since we need to estimate 
temporal as well as covariant derivatives, it is virtually always necessary that we track 
each occurence of the metric and its inverse in the formulas. With this stipulation,
 the ``inequalities''  $U \lt c V_1\ast V_2\ast \cdots \ast V_J$ may be differentiated in both space
and time, that is,
if $U\lt c V\ast W$, then 
\[
\nabla U \lt c \nabla V\ast W + cV\ast \nabla W, \quad\mbox{and}\quad \pd{U}{t} \lt c\pd{V}{t}\ast W + cV\ast \pd{W}{t},
\] 
with obvious generalizations to products of greater numbers of factors.
\end{remark}
 
\subsection{Factorials and combinatorial conventions}
In our coefficients, we will use the convention that $m! \dfn 1$ for all $m\leq 0$ and the notation
\[
  \quad [m] \dfn \max\{m, 1\},\quad \mbox{and}\quad [m]_k \dfn \frac{m!}{(m-k)!} = [m][m-1]\cdots [m-k+1],
\]
for $m$, $k\in \ZZ$. We will further use the rightmost expression to interpret 
$[x]_r$ for $x\in \RR{}$, with $[x] = 1$ when $x\leq 0$ as in the integral case.

We will also use standard multi-index notation, for instance
\[
   |\alpha| \dfn \alpha_1 + \alpha_2 + \ldots + \alpha_l, \quad \binom{|\alpha|}{\alpha} \dfn
\frac{|\alpha|!}{\alpha_1!\alpha_2!\cdots! \alpha_l!},
\]
and 
\[
    [\alpha]_k \dfn [\alpha_1]_k\cdot [\alpha_2]_k\cdots [\alpha_l]_k, 
\]
for multi-indices $\alpha = (\alpha_1, \alpha_2, \ldots, \alpha_{l})$ of length $l$.
Finally, unless specified otherwise, the indices appearing in any set 
are assumed to belong to $\NN_0 \dfn \NN\cup\{0\}$, and any sum over an empty index set is to be interpreted as zero.

\section{Coefficients and combinatorial preliminaries}\label{sec:combin}

We aim to prove \eqref{eq:mainest} ultimately by an induction argument on the combined order, $k+l$, of the derivatives,
however, it turns out that it is much easier to obtain the estimates needed to complete the induction step
if we replace \eqref{eq:mainest} with the apparently stronger variant
\begin{equation}\label{eq:mainestvar}
    t^{k/2+l}|R^{(k, l)}| \leq KL^{k/2+l} (k-2)!(l-2)!.
\end{equation}
Equation \eqref{eq:mainest} (with $L_0 = 2L$) then follows readily
since the inequality $m! n! \leq (m+n)!$, which is valid for all $m$ and $n\geq 0$,
implies that, for $k$, $l\geq 2$,
\[
	 (k-2)!(l-2)! \leq(k+l -4)!,
\]
and, consequently, that
\[
      (k-2)!(l-2)!\leq 2^{k+l}(k + l - 4)!
\]
for all $k$, $l\in \NN_0$.  Of course, there is also
a universal constant $a$ such that
\begin{equation}\label{eq:reversefactineq}
      (m+n)! \leq a^{m+n}m!n!
\end{equation}
for all $m$ and $n\in \NN_0$, so there really is no essential difference in the strength of  
the estimates \eqref{eq:mainest} and \eqref{eq:mainestvar}.
The presence
of the delay (or ``lag'') in the factorials does, however, offer us an indispensable technical advantage
in that it allows us to infer like estimates on the derivatives of some product of tensors from
estimates on the derivatives of its factors.  We learned of this device of ``factorial lag'' 
in the paper of Kinderlehrer-Nirenberg \cite{KinderlehrerNirenberg}, who attribute it to Lax \cite{Lax}
(see also \cite{Friedman} and \cite{Komatsu}).  At its heart is the elementary observation that there exists a (universal) constant
$C$ such that
\[
  \sum_{p = 0}^m [p]_2^{-1}[m-p]_2^{-1} = \frac{1}{m!}\sum_{p=0}^m \binom{m}{p}(p-2)!(m-p-2)! \leq C\frac{(m-2)!}{m!} = C[m]_2^{-1}.
\]
In fact, we have the following more general lemma.
\begin{lemma}\label{lem:comb1} Suppose that $\beta$ is a multi-index of length $l\geq 2$.
If $\max_{1\leq i\leq l} \beta_{i} \geq 2$, then there exists a constant
$C\geq 1$ depending on $l$ and $r \dfn \min_{1\leq i\leq l} \beta_i$ (but not on $m$) such that
\[
  \sum_{|\alpha| = m} \prod_{i=1}^l [\alpha_i]_{\beta_{i}}^{-1} = \sum_{|\alpha|= m} \frac{(\alpha - \beta)!}{\alpha!} 
    \leq C \frac{(m-r)!}{m!} =  C [m]_{r}^{-1}.
\]
\end{lemma}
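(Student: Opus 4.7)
The plan is to establish the lemma by induction on the length $l$ of the multi-index $\beta$, treating the $l = 2$ case directly and using it as the key ingredient in the inductive step. The $l=2$ estimate is a direct generalization of the elementary computation for $\beta = (2,2)$ quoted just before the lemma statement, and the inductive step is essentially a one-variable reduction.

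For the base case $l = 2$, I would arrange the indices so that $\beta_1 \geq \beta_2 = r$; the hypothesis $\max \beta_i \geq 2$ then forces $\beta_1 \geq 2$. I would split the sum $\sum_{p=0}^m [p]_{\beta_1}^{-1}[m-p]_{\beta_2}^{-1}$ at $p = \lfloor m/2 \rfloor$. On $0 \leq p \leq m/2$, the factor $[m-p]_{\beta_2}^{-1}$ is dominated by a constant multiple of $[m]_r^{-1}$ (since $m-p \geq m/2$), while $\sum_{p \geq 0}[p]_{\beta_1}^{-1}$ is finite because $\beta_1 \geq 2$. On the symmetric range $m/2 < p \leq m$ the factor $[p]_{\beta_1}^{-1}$ is bounded by $C[m]_{\beta_1}^{-1} \leq C[m]_r^{-1}$ (using $\beta_1 \geq r$), and the remaining partial sum over $[m-p]_{\beta_2}^{-1}$ is absorbed using the extra decay afforded by the gap $\beta_1 - r \geq 2 - r$.

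For the inductive step $l \geq 3$, I would collapse one variable,
\[
\sum_{|\alpha|=m}\prod_{i=1}^l [\alpha_i]_{\beta_i}^{-1} = \sum_{\alpha_l = 0}^m [\alpha_l]_{\beta_l}^{-1}\sum_{|\alpha'|=m-\alpha_l}\prod_{i=1}^{l-1}[\alpha_i]_{\beta_i}^{-1},
\]
selecting the index to collapse so that the truncated multi-index $\beta' = (\beta_1, \ldots, \beta_{l-1})$ still contains some component $\geq 2$; such an index always exists because $\beta$ itself contains such a component. The inductive hypothesis produces the bound $C_{l-1}[m-\alpha_l]_{r'}^{-1}$ on the inner sum, where $r' = \min_{i < l}\beta_i \geq r$. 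The resulting outer sum is of the same form as the base case, applied now to the two-entry multi-index $(\beta_l, r')$, and therefore delivers the required bound $C_l[m]_r^{-1}$.

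The main obstacle is the base case $l = 2$: this is where all the analytic content of the lemma resides, and it must exploit the factorial lag carefully so that the decay provided by $\beta_1 \geq 2$ dominates any slow growth arising when $\beta_2 = r < 2$. The inductive step is essentially bookkeeping, but does require a modest case analysis in choosing which coordinate to separate, so that both the inductive hypothesis (needing the max-$\geq 2$ condition to persist in $\beta'$) and the base case (needing the max of $(\beta_l, r')$ to be $\geq 2$) remain applicable at each stage.
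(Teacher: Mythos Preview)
Your proposal is correct and follows essentially the same route as the paper: reduce to $l=2$ by peeling off one coordinate and applying induction, then handle $l=2$ by splitting the sum at $p=m/2$ and exploiting the convergence of $\sum_p [p]_{\beta_1}^{-1}$ coming from $\beta_1\ge 2$. The paper's $l=2$ argument proceeds via an explicit case split on $r\in\{0,1,\ge 2\}$ (treating each value of $r$ separately, and in the $r\ge 2$ case using the symmetry $q\ge r\ge 2$ to collapse to a single half), whereas your sketch packages all three cases into the single phrase ``extra decay afforded by the gap $\beta_1-r$''; this is correct in spirit but would need to be unpacked exactly along the paper's lines, since for $r\le 1$ the tail $\sum_{q\le m/2}[q]_r^{-1}$ diverges and must be beaten by the surplus factor $[m-r]_{\beta_1-r}^{-1}$ hidden in $[m]_{\beta_1}^{-1}$.
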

\begin{proof}
Since 
\[
  \sum_{|\alpha| = m} \prod_{i=1}^l [\alpha_i]_{\beta_{i}}^{-1}
  =\sum_{p=0}^m\left\{ \left(\sum_{|\alpha^{\prime}| = m-p}\left(\prod_{i=1}^{l-1}[\alpha^{\prime}_{i}]_{\beta_i}^{-1}\right)\right)\frac{1}{[p]_{\beta_l}} \right\},
\]
it suffices by induction to consider the case $l = 2$. 
By symmetry, we may write $\alpha = (p, m-p)$ and $\beta = (q, r)$ where $q \geq r$.  If $r = 0$, then the sum is simply
\[
 \sum_{p=0}^{m}\frac{1}{[p]_q}\leq \sum_{p=0}^{m}\frac{1}{[p]_2} \leq 2,
\]
and the claim holds trivially.   
If $r \geq 2$, since $q\geq r$, we  have
\begin{align*}
  &\sum_{p=0}^m [p]_{q}^{-1} [m-p]_r^{-1} \leq 2\sum_{0\leq p \leq m/2} [p]_{r}^{-1} [m-p]_r^{-1}\leq \frac{2}{[m/2]_r}\sum_{0\leq p \leq m/2} \frac{1}{[p]_r},
\end{align*}
and, since $[m-2a]/2\leq [m/2 - a]$ and $[m-a]/[m-2a] \leq 1+a$ for any $a\in \NN_0$, we have
\begin{align*}
  \frac{[m]_r}{[m/2]_r}&\leq 2^r\left(\frac{[m]}{[m]}\cdot \frac{[m-1]}{[m-2]}\cdot \frac{[m-2]}{[m-4]} \cdots \frac{[m-(r-1)]}{[m-2(r-1)]}\right)
		      \leq (2r)^r.
\end{align*}
Consequently,
\begin{align*}
  \sum_{p=0}^m [p]_{q}^{-1} [m-p]_r^{-1}
  &\leq \frac{2(2r)^r}{[m]_r}\sum_{p=0}^{\infty}\frac{1}{[p]_2} \leq \frac{6(2r)^{r}}{[m]_r}.
\end{align*}
Similarly, if $r = 1$,   we have, using that $q\geq 2$,
\begin{align*}
 \sum_{p = 0}^m \frac{1}{[p]_q [m-p]} 
  &\leq \sum_{0\leq p \leq m/2} \frac{1}{[p]_2 [m-p]} + \sum_{m/2\leq p \leq m} \frac{1}{[p]_2 [m-p]}\\
  &\leq\frac{2}{[m]}\left(\sum_{0\leq p \leq m/2} \frac{1}{[p]_2}\right) + \frac{2}{[m]}
    \left(\sum_{m/2\leq p \leq m} \frac{1}{[p-1][m-p]}\right)\\
    &\leq \frac{C}{[m]},
\end{align*}
for some universal constant $C$.
\end{proof}

Before we state the next lemma, let us introduce notation for the coefficients we will use as weights
in our estimates throughout the rest of the paper. Define 
\begin{equation}
      a_{k, l} \dfn \frac{1}{(k-2)!(l-2)!}.
\end{equation}
for $k$, $l\in \NN_0$.  Using Lemma \ref{lem:comb1}, we can show that when the norms of the elements of the family
 are weighted with these coefficients, bounds on the derivatives of families of tensors 
extend to estimates of their tensor products (and pure contractions thereof) with the same dependencies on the orders of the derivatives.

\begin{lemma}\label{lem:prodsum}
 Suppose that $U = U(t)$ and $V = V(t)$ are smooth families of tensors on $M$ for $t\in [0, \Omega]$ and
 that $W = W(t) = U\ast V$ is some simple contraction of their product.  Given $\theta > 0$, 
there exists a constant $C= C(n)$ such that, for any $N\in \NN_0$, if we write
       $u_{k, l} \dfn a^2_{k, l}\theta^{k+2l}|U^{(k, l)}|^2$ and $v_{k, l} \dfn a^2_{k, l}\theta^{k+2l}|V^{(k, l)}|^2$,
we have the estimate
\begin{equation}\label{eq:prodsum}
      \sum_{0\leq k+l \leq N} w_{k, l} \leq C\left(\sum_{0\leq k+l \leq N} u_{k, l}\right)
	\left(\sum_{0\leq k+l\leq N} v_{k, l}\right)
\end{equation}
on $w_{k, l} \dfn a^2_{k, l}\theta^{k+2l}|W^{(k, l)}|^2$ for $k$, $l\in \NN_0$.
\end{lemma}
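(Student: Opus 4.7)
The plan is to expand $W^{(k,l)}$ by Leibniz, rescale to the weighted quantities $A_{i,j}\dfn a_{i,j}\theta^{(i+2j)/2}|U^{(i,j)}|$ (so that $A_{i,j}^{2}=u_{i,j}$) and $B_{i,j}$ defined analogously, and then exploit the ``factorial lag'' in $a_{k,l}$. Since $\nabla$ and $\partial_t$ are derivations and commute with the non-metric contractions encoded by $\ast$, iterating the Leibniz rule twice gives
\[
W^{(k,l)}\lt\sum_{i=0}^{k}\sum_{j=0}^{l}\binom{k}{i}\binom{l}{j}\,U^{(i,j)}\ast V^{(k-i,l-j)}.
\]
Taking norms, multiplying by $a_{k,l}\theta^{(k+2l)/2}$, and noting that $\tfrac{k+2l}{2}=\tfrac{i+2j}{2}+\tfrac{(k-i)+2(l-j)}{2}$ makes the $\theta$-weights cancel, I obtain
\[
w_{k,l}^{1/2}\leq C(n)\sum_{i,j}c_{k,l,i,j}\,A_{i,j}\,B_{k-i,l-j},\qquad c_{k,l,i,j}\dfn\binom{k}{i}\binom{l}{j}\frac{a_{k,l}}{a_{i,j}\,a_{k-i,l-j}}.
\]

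A direct calculation using $m!/(m-2)!=[m]_2$ (under the conventions of Section~2) reduces the coefficient to $c_{k,l,i,j}=[k]_2[l]_2/([i]_2[k-i]_2[j]_2[l-j]_2)$. The elementary inequality $i^2+(k-i)^2\geq k^2/2$, together with a brief check of small $k$, yields $[k]_2\leq C_0([i]_2+[k-i]_2)$; applying this in both the $k$- and $l$-variables gives the four-term decomposition
\[
c_{k,l,i,j}\leq C_0^{2}\Bigl(\tfrac{1}{[i]_2[j]_2}+\tfrac{1}{[i]_2[l-j]_2}+\tfrac{1}{[k-i]_2[j]_2}+\tfrac{1}{[k-i]_2[l-j]_2}\Bigr).
\]
In particular $\sum_{i,j}c_{k,l,i,j}$ is bounded by a universal constant $C_1$, since $\sum_{m\geq 0}[m]_2^{-1}<\infty$.

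Squaring $w_{k,l}^{1/2}$ and applying Cauchy-Schwarz with weight $c_{k,l,i,j}$ then gives
\[
w_{k,l}\leq C(n)^{2}\,C_{1}\sum_{i,j}c_{k,l,i,j}\,u_{i,j}\,v_{k-i,l-j}.
\]
Summing over $0\leq k+l\leq N$ and relabelling $(k',l')\dfn(k-i,l-j)$ converts the constraint into $i+j+k'+l'\leq N$ with four independent indices. Inserting the four-term bound on $c$ then decouples the quadruple sum into four products, each of the form $\bigl(\sum u_{i,j}/[\,\cdot\,]_2\bigr)\bigl(\sum v_{k',l'}/[\,\cdot\,]_2\bigr)$, which is $\leq(\sum u_{i,j})(\sum v_{k',l'})$ because $[m]_2\geq 1$. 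Collecting the constants yields \eqref{eq:prodsum}.

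The main technical obstacle is arranging $c_{k,l,i,j}$ into pieces that separate $(i,j)$ from $(k-i,l-j)$ after the swap of summation order; the four-term bound accomplishes exactly this. The possibility of such a bound rests crucially on the factorial lag in $a_{k,l}$ emphasized in the discussion preceding Lemma~\ref{lem:comb1}: if $a_{k,l}$ were $1/(k!l!)$ instead of $1/((k-2)!(l-2)!)$, then $c_{k,l,i,j}$ would reduce to $\binom{k}{i}\binom{l}{j}$, whose sum over $(i,j)$ grows like $2^{k+l}$, and the convolution-type estimate \eqref{eq:prodsum} would fail as $N\to\infty$.
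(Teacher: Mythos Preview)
Your argument is correct. It differs from the paper's proof only in how the Cauchy--Schwarz step is organized and how the decoupling is achieved. The paper applies Cauchy--Schwarz in the form
\[
\Bigl(\sum_{i,j} c_{k,l,i,j}\,A_{i,j}B_{k-i,l-j}\Bigr)^{2}\leq \Bigl(\sum_{i,j} c_{k,l,i,j}^{2}\Bigr)\Bigl(\sum_{i,j} u_{i,j}v_{k-i,l-j}\Bigr),
\]
bounds $\sum c^{2}$ via Lemma~\ref{lem:comb1} (this is where the $[k]_4$, $[l]_4$ appear), and then the residual convolution $\sum_{i,j}u_{i,j}v_{k-i,l-j}$ decouples immediately upon summing over $k+l\leq N$. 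You instead use the weighted form $(\sum c\,AB)^{2}\leq(\sum c)(\sum c\,uv)$, bound $\sum c$ directly from the convergence of $\sum_{m}[m]_{2}^{-1}$, and then must still decouple $\sum c\,uv$; your additive inequality $[k]_{2}\leq C_{0}([i]_{2}+[k-i]_{2})$ accomplishes exactly this. The paper's route is a bit cleaner once Lemma~\ref{lem:comb1} is in hand (and is the template reused later for the $\mathcal{V}_{k,l}$ and $\mathcal{W}_{k,l}$ terms), while yours is more self-contained, avoiding Lemma~\ref{lem:comb1} altogether at the cost of the extra splitting step.
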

\begin{proof} Throughout the proof, $C$ will denote a series of constants that depend only on $n$.
For any $k$ and $l\in \NN_{0}$, from the Leibniz rule, we have
\[
      |W^{(k, l)}| \leq  C\sum_{p=0}^k\sum_{q=0}^l\binom{k}{p}\binom{l}{q} |U^{(p, q)}||V^{(k-p, l-q)}|
\]
and so, by Cauchy-Schwarz,
\begin{align*}
      w_{k, l} &\leq C\left(\sum_{p=0}^k\sum_{q=0}^l\frac{\theta^{k/2+ l}[k]_2 [l]_2}{p!(k-p)!q!(l-q)!}|U^{(p, q)}||V^{(k-p, l-q)}|\right)^2\\
	       &\leq C\left(\sum_{p=0}^k\sum_{q=0}^l\frac{[k]_2 [l]_2}{[p]_2[k-p]_2[q]_2[l-q]_2}u_{p, q}^{1/2}v_{k-p, l-q}^{1/2}\right)^2\\
 	       &\leq C\left(\sum_{p=0}^k\sum_{q=0}^l\left(\frac{[k]_2 [l]_2}{[p]_2[k-p]_2[q]_2[l-q]_2}\right)^2\right)
 	         \left(\sum_{p=0}^k\sum_{q=0}^lu_{p, q}v_{k-p, l-q}\right).
\end{align*}
Now,
\[
    \sum_{p=0}^k\sum_{q=0}^l\left(\frac{[k]_2 [l]_2}{[p]_2[k-p]_2[q]_2[l-q]_2}\right)^2 \leq 
	  C\left(\sum_{p=0}^k \frac{[k]_4}{[p]_4[k-p]_4}\right)\left(\sum_{q=0}^l\frac{[l]_4}{[q]_{4}[l-q]_4}\right) \leq C,
\]
by Lemma \ref{lem:comb1}, so, for any $N\in \NN_0$,
\begin{align*}
    \sum_{0\leq k + l\leq N} w_{k, l} &\leq C \sum_{0\leq k + l \leq N}\left(\sum_{\stackrel{p+q=k}{r+s=l}}
    u_{p, r}v_{q, s}\right)\\
      &\leq C \left(\sum_{0\leq k+l\leq N} u_{k, l}\right) \left(\sum_{0 \leq k+l\leq N} v_{k, l}\right),
\end{align*}
for some $C = C(n)$ as claimed.
\end{proof}

\begin{remark} By induction, the above result can be extended to contractions of products of arbitrary length.
 In particular, \eqref{eq:prodsum} implies that if $V$ is a simple contraction of the $j$-fold tensor product of $U$ with itself, i.e.,
\[
      V = \underbrace{U\ast U \ast \cdots \ast U}_{j},
\]
then there exists a constant $C= C(n)$ such that, for any $N\in \NN_0$,
\[
     \sum_{0\leq k + l\leq N}\theta^{k+2l}a_{k, l}^2|V^{(k, l)}|^2 \leq C^{j} 
    \left(\sum_{0\leq k + l \leq N}\theta^{k+2l}a_{k, l}^2|U^{(k, l)}|^2\right)^{j}.
\]
\end{remark}

\section{The induction argument}\label{sec:indarg}

Now we specialize to the case of the Ricci flow, and henceforth suppose that $g(t)$ is a solution to \eqref{eq:rf} on 
$M\times [0, \Omega]$ satisfying the assumptions of Theorem \ref{thm:timeanalyticity}.
We will write $h(t) = g^{-1}(t)$ for the metric induced on $T^*(M)$ by $g(t)$, and let $L \geq 1$ denote a large positive constant
whose value we will eventually prescribe in terms of the external parameters $n$, $M_0$, and $\Omega^* = \max\{\Omega, 1\}$. 
Many of the expressions to follow will depend on $L$ through the function
\[
    \theta(t) \dfn\theta_L(t) \dfn \frac{t}{L},
\]
 but, except when the dependency needs particular emphasis, we will suppress the subscript $L$ in our notation.

Next, for any $k$, $l\in \NN_0$ (and fixed $L$), we define 
\begin{equation}\label{eq:phichidef}
   \phi_{k, l} \dfn \phi_{k, l; L} \dfn a^2_{k, l} \theta^{k+2l}|\R{k}{l}|^2,  \quad\mbox{and}\quad 
  \chi_{k, l} \dfn \chi_{k, l; L} \dfn a^2_{k, l}\theta^{k+2l}|\h{k}{l}|^2,
\end{equation}
and, for any $N\in \NN_0$,
\begin{equation}\label{eq:PhiChidef}
  \Phi_{N} \dfn \Phi_{N, L} \dfn \sum_{0\leq k + l\leq N}\phi_{k, l},\quad\mbox{and}\quad X_{N} \dfn X_{N; L}\dfn \sum_{0\leq k + l \leq N} \chi_{k, l}.
\end{equation}
Similarly, for $k \in \NN$ and $l\in \NN_0$, we define
\begin{equation}\label{eq:psidef}
    \psi_{k, l} \dfn \psi_{k, l; L} \dfn a^2_{k-1, l}\theta^{k +2l -1}|\R{k}{l}|^2,
\end{equation}
and, for any $N\in \NN$,
\begin{equation}\label{eq:Psidef}
 \Psi_N \dfn \Psi_{N; L} \dfn \sum_{\stackrel{1\leq k+l\leq N+1}{k\geq 1}}\psi_{k, l} = \sum_{m= 1}^{N+1}\sum_{k=1}^m \psi_{k, m-k}.
\end{equation}
Note that, for $t > 0$ and $k\in \NN$, we have
\begin{equation}\label{eq:psiphirel}
    \psi_{k, l}  = \frac{[k-2]^2}{\theta}\phi_{k, l}.
\end{equation}

Our goal will be to prove that, for an appropriate choice of the parameter $L$, $\Phi_{N, L}$ will 
be bounded on $M\times[0, \Omega]$ independently of $N$. Then \eqref{eq:mainest} will follow from
a comparison of the individual terms with this bound. The bound itself will be a 
consequence of iterating the following estimate to cover the interval $[0, \Omega]$.

\begin{theorem}\label{thm:phibound}
For any $M_0 > 0$ and $0 < \Omega_0 \leq 1$  there exist $L_1 = L_1(n, M_0) \geq 1$ and $\Omega_1 = \Omega_1(n, M_0)$ with $0 <\Omega_1\leq \Omega_0$, 
such that, if $g(t)$ is a solution to \eqref{eq:rf} on $M\times [0, \Omega_0]$ satisfying
\[
  \sup_{M\times [0, \Omega_0]}|R(x,t)|^2\leq M_0^2,
\]
then, for all $L \geq L_1$, 
\begin{equation}\label{eq:phibound}
\sup_{M\times [0, \Omega_1]} \Phi_{N; L}(x, t) \leq 2M_0^2
\end{equation}
for any $N\in \NN_0$.
\end{theorem}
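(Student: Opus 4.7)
The plan is to combine an induction on $N$ with a bootstrap in time: the weights $a_{k,l}$ and the factors $\theta^{k+2l}$ in $\phi_{k,l}$ are tailored so that the evolution of $\Phi_N$ closes with constants independent of $N$. For the base case $N=0$, $\Phi_0 = |R|^2 \leq M_0^2 < 2M_0^2$ holds globally. For general $N$, since $\theta(0)=0$ and $k+2l \geq 1$ whenever $k+l \geq 1$, the initial value is $\Phi_N(\cdot,0) = |R(\cdot,0)|^2 \leq M_0^2$, leaving a margin of $M_0^2$ for the bootstrap to consume.

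The core calculation is a master differential inequality for $\Phi_N$. First I would expand $(\partial_t - \Delta)R^{(k,l)}$ by iterating the commutators $[\partial_t,\nabla]$ against the Ricci-type identity $(\partial_t - \Delta) R = R*R$; each time-differentiation introduces factors of the inverse metric, whose derivatives must be tracked independently through $X_N$. Combined with the Bochner identity for $|R^{(k,l)}|^2$ and the time-derivative of the weight, this should yield
\begin{equation*}
(\partial_t - \Delta)\phi_{k,l} \leq -2\psi_{k+1,l} + \frac{k+2l}{Lt}\phi_{k,l} + 2 a_{k,l}^2 \theta^{k+2l}\langle Q_{k,l}, R^{(k,l)}\rangle,
\end{equation*}
where $Q_{k,l}$ schematically collects the nonlinear reaction. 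Summing over $k+l \leq N$, applying Cauchy--Schwarz, and invoking Lemma \ref{lem:prodsum} (and its multi-factor extension) to control the reaction sum, I expect to obtain an estimate of the shape
\begin{equation*}
(\partial_t - \Delta)\Phi_N \leq -\frac{c}{L}\Psi_N + C\bigl(1 + \Phi_N + X_N\bigr)^{p}\Phi_N,
\end{equation*}
with $c > 0$, $C = C(n, M_0)$, and $p$ fixed, all independent of $N$. Identity \eqref{eq:psiphirel} is used to absorb the weight-derivative term $(k+2l)/(Lt)\phi_{k,l}$ into $-(c/L)\Psi_N$, up to finitely many low-order exceptions that are moved into the $C\Phi_N$ remainder.

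To close the bootstrap, I would control $X_N$ in parallel by applying the same scheme to $\partial_t h = h*h*R$, expressing $h^{(k,l)}$ inductively as contracted products of $h$'s and lower-order $R^{(k',l')}$'s; the inductive hypothesis $\Phi_{N-1} \leq 2M_0^2$ together with Lemma \ref{lem:prodsum} then yields $X_N \leq C(n, M_0)$ on the relevant time interval. Choosing $L \geq L_1(n, M_0)$ large enough that $-(c/L)\Psi_N$ dominates every reaction term proportional to $\Psi_N$, the master inequality restricted to $\{\Phi_N \leq 2M_0^2\}$ reduces to $(\partial_t - \Delta)\Phi_N \leq C_1(n, M_0)$. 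Shi's estimates and the bounded-curvature hypothesis ensure that $\Phi_N$ is a priori bounded on $M \times [0,\Omega_0]$ for each fixed $N$, so a noncompact maximum principle (implemented with a cutoff adapted to the background metric) applied to this inequality gives $\Phi_N \leq M_0^2 + C_1 t \leq 2M_0^2$ on $[0, \Omega_1]$ whenever $\Omega_1 \leq \min\{\Omega_0, M_0^2/C_1\}$. A continuity argument in the first time the bound would be violated completes the induction step.

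The hardest part will be the combinatorial bookkeeping needed to ensure that the constants in the master differential inequality are genuinely independent of $N$. The iterated time-differentiation of the Ricci flow equation generates a proliferating collection of terms mixing derivatives of $R$ and $h$, and shepherding all of them into the factorial-lag scaling $a_{k,l}^2 = ((k-2)!(l-2)!)^{-2}$ --- particularly the terms arising from $[\partial_t^l, \Delta]$ that carry nearly top-order derivatives in both space and time --- is delicate. The combinatorial apparatus of Section \ref{sec:combin}, and especially Lemma \ref{lem:prodsum}, is precisely what makes this tracking possible, but verifying that the commutator-generated top-order derivatives are fully absorbed into the good sum $\Psi_N$, with no stray terms left outside, is the most subtle part of the argument.
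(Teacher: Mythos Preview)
Your high-level plan---induction on $N$, a bootstrap in time closed by a master differential inequality and the maximum principle---is exactly the paper's route, and your use of Shi's estimates to justify the noncompact maximum principle and of Lemma~\ref{lem:prodsum} to control $X_N$ matches the paper's argument. But there is a genuine gap at the step where you absorb the weight-derivative term.

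You claim that identity~\eqref{eq:psiphirel} lets you absorb $(k+2l)/t\,\phi_{k,l}$ into the good sum $\Psi_N$, ``up to finitely many low-order exceptions.'' This fails when $k$ is small relative to $\sqrt{l}$, and the exceptions are not finitely many. Identity~\eqref{eq:psiphirel} gives $\psi_{k,l} = [k-2]^2\phi_{k,l}/\theta$, so the good term available to absorb $\phi_{k,l}$ scales like $k^2/\theta$, whereas the weight-derivative term scales like $l/\theta$. For $k=0$ in particular, the terms $\phi_{0,l}$ do not appear in $\Psi_N$ at all, so the bad contributions $2l/t\cdot\phi_{0,l}$ for \emph{every} $l\ge 1$ have nothing to be absorbed into; they also cannot be pushed into a $C\Phi_N$ remainder because of the extra factor $l/t$. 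The paper handles this with a separate and essential device (Lemma~\ref{lem:phi2dbd} and the case-split in Lemma~\ref{lem:phikl2ndest}): using the equation itself to write $R^{(k,l)} = \Delta R^{(k,l-1)} + Q_{k,l-1}$, one trades a time derivative for two space derivatives, converting $\phi_{k,l}$ into a term controlled by $\psi_{k+2,l-1}$ with a gain of $[l-2]^{-2}$. This time-to-space exchange is what makes the small-$k$ terms absorbable, and it is missing from your sketch.

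A secondary point: your good term is stated as $-(c/L)\Psi_N$, with the claim that taking $L$ large makes it dominate the reaction terms proportional to $\Psi_N$. The scaling is backwards. In the paper the good term is $-\tfrac{3}{2}\Psi_N$ (order one in $L$), and it is the \emph{bad} terms that carry factors of $\theta = t/L$ or $\theta^2$ and therefore become small when $L$ is large; see Proposition~\ref{prop:phiev}. Likewise the final ODE comparison is quadratic, $(\partial_t-\Delta)\Phi_N \le A(\Phi_N^2+1)$, rather than the linear bound you write, though this difference is cosmetic once the inequality is correctly derived.
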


We will prove Theorem \ref{thm:phibound} by an induction argument, the key to which is the following proposition.
Its proof will occupy the bulk of the sequel.
\begin{proposition}\label{prop:phiev}
  Given any $\Omega > 0$, and a complete solution to \eqref{eq:rf} with 
\[
    \sup_{M\times [0, \Omega]} |R(x,t)| \leq M_0,
\]
there exist constants $C_1 = C_1(n)$ 
  and $L_2 = L_2(n, M_0, \Omega^*)$ such that, whenever $L \geq L_2$
 and $\Phi_N = \Phi_{N; L}$ satisfies
\[
    \sup_{M\times [0, \Omega]}\Phi_{N}(x, t) \leq 2M_0^2
\]
for some $N \in \NN_0$,
then
   \begin{equation}\label{eq:phiev}
    \left(\pdt - \Delta\right)\Phi_{N+1} \leq -\left(1 - C_1\theta^2(\Phi_{N+1}^2+1)\right)\Psi_{N+1}
	+ C_1(M_0^2+1)(\Phi_{N+1}^2 + 1)
\end{equation}
on $M\times[0, \Omega]$.
\end{proposition}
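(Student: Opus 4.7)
The plan is to apply the heat operator term-by-term to $\phi_{k,l} = a_{k,l}^2\theta^{k+2l}|\R{k}{l}|^2$ and sum over $k+l\le N+1$. The Bochner identity gives
\[
\heat|\R{k}{l}|^2 = 2\langle\R{k}{l},\heat \R{k}{l}\rangle - 2|\R{k+1}{l}|^2 + 2\Rc(\R{k}{l},\R{k}{l}),
\]
and after multiplication by $a_{k,l}^2\theta^{k+2l}$ the gradient term becomes exactly $-2\psi_{k+1,l}$. Summation produces the clean principal contribution $-2\Psi_{N+1}$, and the task reduces to showing that every other contribution either absorbs into $C_1\theta^2(\Phi_{N+1}^2+1)\Psi_{N+1}$ or fits into the additive $C_1(M_0^2+1)(\Phi_{N+1}^2+1)$ term.

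Next I would derive a schematic form for $\heat \R{k}{l}$. The Ricci flow identity $\pdt R\lt c\,h\ast R\ast R + \Delta R$, combined with the commutator identities $[\pdt,\grad]U\lt c\,\grad R\ast U$ and $[\Delta,\grad]U\lt c\,R\ast\grad U + c\,\grad R\ast U$, yields a bound
\[
\heat \R{k}{l}\lt\sum c\cdot \h{k_0}{l_0}\ast \R{k_1}{l_1}\ast\cdots\ast\R{k_s}{l_s},
\]
in which the total combined order of the factors is controlled by $k+l$, and any factor of top order is necessarily a $\grad R$-type term that matches a $\psi$-weight. The correction from $\pdt g = -2\Rc$ acting on the norm produces similar schematic products with one extra factor of $\R{0}{0}$.

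The combinatorial heart of the proof is to pair each such product with $\R{k}{l}$, multiply by $a_{k,l}^2\theta^{k+2l}$, sum over $k+l\le N+1$, and apply Lemma \ref{lem:prodsum} (and the multi-factor remark after it) to bound the result by a product of finite powers of $\Phi_{N+1}$ and $X_{N+1}$. The factorial lag built into $a_{k,l}$ is precisely calibrated for this step; it is what allows derivatives of products of tensor families to inherit the same-shape estimates as their factors. Extra powers of $\theta$ accumulated either from the commutators or from $\pdt\theta^{k+2l} = (k+2l)L^{-1}\theta^{k+2l-1}$ furnish either small constants $L^{-1}$, absorbable by choosing $L_2$ large in terms of $n$, $M_0$, $\Omega^*$, or the quadratic factors $\theta^2$ on the right-hand side of \eqref{eq:phiev}.

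The only remaining input is control of $X_{N+1}$. Compatibility of $\grad$ with $h$ gives $\h{k}{0}=0$ for $k\ge 1$, and for $l\ge 1$ the relation $\pdt h\lt c\,h\ast h\ast R$, together with iterated differentiation, expresses $\h{k}{l}$ in terms of $\h{p}{q}$ and $\R{p'}{q'}$ with $p+q$, $p'+q'<k+l\le N+1$, i.e., strictly below the order of the quantities controlled by the hypothesis $\Phi_N\le 2M_0^2$. A short parallel induction, again using Lemma \ref{lem:prodsum}, yields $X_{N+1}\le C(n,M_0)$, so every $X_{N+1}$ factor in the pre-final estimate collapses into the allowed $C_1(M_0^2+1)$. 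The main obstacle I anticipate is the combinatorics of $[\pdt^l,\Delta]$ and $[\pdt^l,\grad^{(k)}]$: each iterated time derivative both multiplies the number of $h$-factors and generates terms containing $\pdt\Gamma\lt\grad R$, so the factorial-weighted sums must be arranged with some care to ensure that Lemma \ref{lem:prodsum} applies cleanly to each resulting product.
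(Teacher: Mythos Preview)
Your overall structure---Bochner identity, commutator decomposition, Lemma \ref{lem:prodsum} for the products, and the inductive bound on $X_{N+1}$---matches the paper's. But there is a real gap in your treatment of the term arising from $\pdt\theta^{k+2l}$.

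You write that this term ``furnishes small constants $L^{-1}$, absorbable by choosing $L_2$ large.'' That is not true uniformly in $k$ and $l$. The contribution is
\[
   a_{k,l}^2\big(\pdt\theta^{k+2l}\big)\,|\R{k}{l}|^2 \;=\; \frac{k+2l}{L\theta}\,\phi_{k,l},
\]
and the only available negative term of the same index is $\psi_{k,l} = [k-2]^2\phi_{k,l}/\theta$ (produced by $(\pdt-\Delta)\phi_{k-1,l}$). The ratio $(k+2l)/(L[k-2]^2)$ is \emph{not} small when $k$ is small relative to $l$: for $k\in\{0,1,2\}$ it equals $(k+2l)/L$, which is unbounded as $l\to\infty$ regardless of $L$. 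In particular, for $k=0$ there is no $\psi_{0,l}$ at all to absorb it into. This is exactly the obstacle the paper flags after Lemma \ref{lem:phiklevol}: the ``good'' gradient terms carry a coefficient proportional to $k^2$, not to $l$, so pure (or nearly pure) time derivatives have no matching negative term.

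The paper's remedy, which your outline is missing, is a case split according to whether $k\ge\sqrt{l}$ or $k<\sqrt{l}$ (Lemma \ref{lem:phikl2ndest}). When $k<\sqrt{l}$ one uses the evolution equation once more in the form $\R{k}{l} = \nabla^{(k)}\Delta\R{0}{l-1} + \nabla^{(k)}Q_{0,l-1}$ to trade one time derivative for two spatial derivatives (Lemma \ref{lem:phi2dbd}), yielding
\[
   \phi_{k,l}\;\le\; \frac{2n\theta[k-1]^2}{[l-2]^2}\,\psi_{k+2,\,l-1} + \frac{4\theta^2}{[l-2]^2}\,\mc{S}_{k,\,l-1}.
\]
After this substitution the troublesome coefficient acquires an extra $[l-2]^{-2}$, and the bad term is then bounded by a small multiple of $\psi_{k+2,l-1}$, which \emph{does} appear among the good terms. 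Without this exchange step the induction cannot close.
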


\subsection{Proof of Theorem \ref{thm:phibound}}\label{sec:proofthmphibd}
We postpone the proof of Proposition \ref{prop:phiev} for the time-being and give first the argument
for Theorem \ref{thm:phibound}.

\begin{proof}[Proof of Theorem \ref{thm:phibound}, assuming Proposition \ref{prop:phiev}] We first claim
 that we may assume that, for all $L$ and $N$, we have a preliminary bound of the form
\[
  \sup_{M\times [0, \Omega_0]} \Phi_{N; L} \leq M_1(N, L). 
\]
To see this, note that, for any $0 < \epsilon <\Omega$, we can replace $g(t)$ with $g_{\epsilon}(t) = g(t+\epsilon)$
on $M\times [0, \Omega_\epsilon]$, where $\Omega_{\epsilon}\dfn \Omega -\epsilon$, and consider the corresponding expression
 $\Phi_{N; L}^{(\epsilon)}$ for this solution.  We will have
 $|R_{\epsilon}(x, t)|^2 \leq M_0^2$ on $M\times [0, \Omega_{\epsilon}]$ and 
\[
  \Phi^{(\epsilon)}_{N; L}(x, 0) = |R_{\epsilon}(x, 0)|^2 = |R(0, \epsilon)|^2 \leq M_0^2
\]
on $M$,
just as in the argument below, but, in addition, Shi's estimates (cf. \cite{Shi}) for the derivatives of the curvature tensor of $g(t)$ will imply uniform bounds of the form
\[
   \sup_{M\times [0, \Omega_{\epsilon}]} |\R{k}{l}_{\epsilon}| \leq C(\epsilon, k, l, n, M_0)
\]
 for the derivatives of the curvature tensor $R_{\epsilon}$ of $g_{\epsilon}$, and hence a bound of the form $\Phi^{(\epsilon)}_{N; L} \leq M_1(N, L)$
 for any fixed $L$ and $N$.  Although these bounds are $O(\epsilon^{-N})$ as $\epsilon\searrow 0$, 
using $\Phi^{(\epsilon)}_N$ in the argument below,
 we will be able to apply the maximum principle for any \emph{fixed} $\epsilon$ to obtain the $\epsilon$-independent bound 
 $\Phi^{(\epsilon)}_{N; L} \leq 2M_0^2$ on $M\times [0, \Omega_{\epsilon}]$. Sending $\epsilon \searrow 0$ then yields the same bound for $\Phi_{N; L}$ on 
 $M\times [0, \Omega]$. Note that the lower bound $L_2$ on $L$ from Proposition \ref{prop:phiev} we use here is independent of $\epsilon$, since we assume
$\Omega \leq 1$ and so have 
\[	
  \Omega_{\epsilon}^* = \max\{\Omega-\epsilon, 1\} = 1 = \Omega^*.
\]
Thus we assume below that each $\Phi_{N; L}$ is uniformly bounded on $M\times [0, \Omega_0]$.

 Let the constants $C_1$ and $L_2$ be as in Proposition \ref{prop:phiev} and $L \geq L_2$. 
We argue by induction on $N$.
  The case $N=0$ is trivial, as we have $\Phi_{0; L} = |R|^2 \leq M_0^2$ by assumption. Assume, then, that, for some $N\geq 1$,
  we have $\Phi_{N-1} = \Phi_{N-1; L} \leq 2M_0^2$ on $M\times [0, \Omega_0]$.  By Proposition \ref{prop:phiev}, we have
\begin{equation*}
   \left(\pdt - \Delta\right) \Phi_{N} \leq -(1-C_1\theta^2(\Phi_{N}^2+1))\Psi_N + C_1(M_0^2+1)(\Phi_{N}^2 + 1)
\end{equation*}
 on $M\times [0, \Omega]$. Now, $\Phi_N(x, 0) = |R(x, 0)|^2 \leq M_0^2$, and, as we have noted above, we may assume that
$\Phi_{N}$ is uniformly bounded, say, by $M_1$, on $M\times [0, \Omega_0]$.  If we define
\[
   \tau_N \dfn \tau_{N, L} \dfn \sup\bigg\{\,t\in [0, \Omega_0]\,\bigg|\, \sup_{x\in M} \frac{C_1t^2}{L^2}\left(\Phi_{N}^2(x, t)+ 1\right)\leq 1\,\bigg\} 
\]
then $\tau_{N}\geq L/\sqrt{C_1(M_1^2+1)} > 0$, and we have
\[
 \left(\pdt - \Delta\right)\Phi_N \leq A(\Phi_{N}^2 + 1)
\]
on $M\times [0, \tau_N]$, where $A \dfn C_1(M_0^2 + 1)$.  Therefore, if 
if $F(t)$ is the solution to $F^{\prime} = A(F^2 + 1)$ with $F(0) = M_0^2$,
  i.e., $F(t) = \tan(\delta + At)$
where $\delta \dfn \arctan(M_0^2) < \pi/2$, then, by the maximum principle, 
we have $\Phi_N(x, t) \leq F(t)$ on $M\times [0, \min\{\tau_N, \tau^*\}]$, where $\tau^* \dfn A^{-1}(\pi/2-\delta)$ is the upper bound
of the interval of existence for $F(t)$. (If $M$ is not compact, the use
of the maximum principle is justified by the uniform bounds on $\Phi_N$ and $R$;  see, e.g., Theorem 12.14 of \cite{RFV2P2} 
for a precise statement of the maximum principle we invoke.)  Since
\[
    \arctan{(2M_0^2)} - \arctan{(M_0^2)} \leq \frac{M_0^2}{1+M_0^2},
\]
we can  ensure $F(t) \leq 2M_0^2$ for $t \leq \Omega_1^{\prime} \dfn C_1^{-1}M_0^2/(1+M_0^2)^2$,
and hence that $\Phi_N \leq 2M_0^2$ on $M\times [0, \min\{\tau_N, \Omega^{\prime}\}]$.

We are still free to adjust $L$, and claim that, if $L > 4/\sqrt{C_1}$, then $\tau_{N} > \Omega^{\prime}$,
and, consequently, $\Phi_{N, L} \leq 2M_0^2$ on $M\times [0, \Omega^{\prime}]$. 
For, if $\tau_{N} \leq \Omega^{\prime}$, there would exist $(x_0, t_0) \in M\times (0, \tau_N)$ at which
\begin{align}\label{eq:taubd}
      \frac{C_1t_0^2}{L^2}(\Phi_{N}^2(x_0, t_0)+1)\geq \frac{1}{2}.
\end{align}
But, on $M\times (0, \tau_N) \subset M\times [0, \Omega^{\prime}]$, we have $\Phi_N(x, t)\leq 2M_0^2$, so
\begin{align*}
  \frac{C_1t_0^2}{L^2}(\Phi_{N}^2(x_0, t_0)+1) &\leq \frac{C_1^2(\Omega^{\prime})^2(4M_0^4+1)}{16}\leq \frac{M_0^4(M_0^4+1)}{4(M_0^2+1)^4} \leq \frac{1}{4},
\end{align*}
contradicting \eqref{eq:taubd}.
(Note that, for any $(x, t)$, $\Phi_{N; L}(x, t)$ is monotone-decreasing in $L$.)
Thus, for such $L$, $\tau_N = \tau_{N, L} > \Omega^{\prime}$, and we may take $\Omega_1 \dfn \Omega^{\prime}$ to obtain estimate \eqref{eq:phibound}.
\end{proof}

It remains to prove Proposition \ref{prop:phiev}. We carry this out over the next three sections.

\section{A preliminary estimate on $\left(\pdt - \Delta\right)\Phi_N$.}\label{sec:phiev}
As a first step toward the proof of Proposition \ref{prop:phiev}, we perform some rather high-level manipulations on the evolution
equation for $\Phi_N$, leaving a detailed consideration of the commutation and reaction terms for the next section. 
For now, we introduce notation for these terms and bundle them together. For any $k$, $l\in \NN_0$, we define
\[
     Q_{k, l}\dfn \left(\pdt - \Delta\right)\R{k}{l},
\]
and further split $Q_{k, l}$ into three terms 
\[
    Q_{k, l} = U_{k, l} + V_{k, l} + W_{k, l}
\]
where
\begin{align}\begin{split}\label{eq:uvwdef}
  U_{k, l} &\dfn \nabla^{(k)}\dt{l}\left(\pdt -\Delta\right) R =  \nabla^{(k)}\dt{l}Q_{0, 0},\\
  V_{k, l} &\dfn \left[ \left(\pdt - \Delta\right), \nabla^{(k)}\right]\R{0}{l},\quad\mbox{and} \\
   W_{k, l} &\dfn \nabla^{(k)}\left[\left(\pdt - \Delta\right), \dt{l}\right] R = \nabla^{(k)}\left[\dt{l}, \Delta\right]R.
\end{split}
\end{align}
It will be helpful also to set aside notation for the scaled norms of the above quantities; we will write
\begin{align}\label{eq:mcuvwdef}
\begin{split}
\mc{Q}_{k, l} &\dfn a_{k, l}^2\theta^{k+2l}|Q_{k, l}|^2, \quad \mc{U}_{k, l} \dfn a_{k, l}^2\theta^{k+2l}|U_{k, l}|^2,\\
  \mc{V}_{k, l} &\dfn a_{k, l}^2\theta^{k+2l}|V_{k, l}|^2, 
    \quad\mc{W}_{k, l}\dfn  a_{k, l}^2\theta^{k+2l}|W_{k, l}|^2,
\end{split}
\end{align}
and
\[
      \mc{S}_{k, l} = \mc{U}_{k, l} + \mc{V}_{k, l} + \mc{W}_{k, l},
\]
and observe now the elementary inequality
\begin{equation}\label{eq:qsineq}
    \mc{Q}_{k, l} \leq 3\mc{S}_{k, l}
\end{equation}
for later use.

Our goal in this section is to establish the following result. 
\begin{proposition}\label{prop:prelimphibound}
  Under the assumptions of Theorem \ref{thm:mainest}, there exist constants $C_3 = C_3(n)$ and $L_4 = L_4(n, M_0, \Omega^*)$
such that, whenever $L \geq L_4$, for any $N\in \NN_0$, the evolution of the quantity $\Phi_N = \Phi_{N; L}$ satisfies
\begin{equation}\label{eq:prelimphibound}
 \left(\pdt-\Delta\right)\Phi_N \leq -\frac{3}{2}\Psi_{N} + \sum_{1\leq k+l\leq N}\left(\frac{C_3\theta}{[k-1]^2}\mc{S}_{k, l}\right)
      + C_3M_0\phi_{0, 0}
\end{equation}
on $M\times [0, \Omega]$.
\end{proposition}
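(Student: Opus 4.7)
The plan is to compute $\heat\phi_{k,l}$ for each $(k,l)$ via a parabolic Bochner identity, apply Cauchy--Schwarz with carefully chosen weights to produce the $\mc{S}$-terms on the right-hand side of \eqref{eq:prelimphibound}, sum over $0\leq k+l\leq N$, and absorb all positive $\phi$-residuals into $-2\Psi_N$ to yield the target $-\frac{3}{2}\Psi_N$. Two algebraic identities do most of the bookkeeping: $\psi_{k+1,l} = a_{k,l}^2\theta^{k+2l}|\R{k+1}{l}|^2$, which identifies the Bochner gradient contribution with a single $\psi$ and (via re-indexing $k'=k+1$) contributes exactly $-2\Psi_N$ upon summation; and $\phi_{k,l}/\theta = \psi_{k,l}/[k-2]^2$ for $k\geq 1$, which allows $\phi$-reactions to be rewritten as pieces of $\Psi_N$.

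First, for each $(k,l)$ with $0\leq k+l\leq N$, I would derive the pointwise inequality
\[
 \heat\phi_{k,l} \leq 2a_{k,l}^2\theta^{k+2l}\langle \Q{k}{l}, \R{k}{l}\rangle - 2\psi_{k+1,l} + C_n(k+4)M_0\phi_{k,l} + \frac{k+2l}{L\theta}\phi_{k,l},
\]
obtained by applying $\heat|T|^2 = 2\langle \heat T, T\rangle - 2|\nabla T|^2 + (\partial_t g_\ast)\ast T\ast T$ with $T = \R{k}{l}$ and multiplying through by $a_{k,l}^2\theta^{k+2l}$. The third summand absorbs the $\partial_t g$ terms acting through the norm (the factor $(k+4)$ counting the metric contractions in $|\R{k}{l}|^2$); the fourth comes from $\partial_t\theta^{k+2l} = (k+2l)\theta^{k+2l-1}/L$. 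To the Bochner inner product I would apply Cauchy--Schwarz with weight $\lambda_{k,l} = C_3\theta/(3[k-1]^2)$, producing (via $\mc{Q}_{k,l}\leq 3\mc{S}_{k,l}$) the bound $\tfrac{C_3\theta}{[k-1]^2}\mc{S}_{k,l} + \tfrac{3[k-1]^2}{C_3\theta}\phi_{k,l}$.

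The base case $(k,l)=(0,0)$ requires special handling, since both the $\mc{S}$-sum and the factor $1/\theta$ are degenerate there. Instead, using that $Q_{0,0} = \heat R \sim R\ast R$ under Ricci flow, so $|Q_{0,0}|\leq C_n|R|^2 \leq C_n M_0|R|$, I would bound the inner product directly by $2|\langle Q_{0,0},R\rangle|\leq C_n M_0 \phi_{0,0}$, yielding the $C_3 M_0\phi_{0,0}$ term with no $\mc{S}_{0,0}$ contribution. For the remaining problematic indices $(0,l)$ with $l\geq 1$, where $\phi_{0,l}/\theta$ admits no direct $\psi$-conversion, I would use the parabolic identity $\R{0}{l} = \Q{0}{l-1} + \Delta\R{0}{l-1}$ with $|\Delta\R{0}{l-1}|\leq C_n|\R{2}{l-1}|$ to obtain
\[
 \frac{\phi_{0,l}}{\theta} \leq \frac{C_n}{[l-2]^2}\psi_{2,l-1} + \frac{2\theta}{[l-2]^2}\mc{Q}_{0,l-1},
\]
which dispatches the residuals at $k=0$ into $\Psi_N$ (via $\psi_{2,l-1}$) and into the $\mc{S}$-sum at $(0,l-1)$ respectively, with coefficient controlled by $l/[l-2]^2\leq 3$.

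Finally, for $k\geq 1$ the conversion $\phi_{k,l} = \theta\psi_{k,l}/[k-2]^2$ reduces the combined $\phi$-residuals to a multiple of $\psi_{k,l}$ with schematic coefficient
\[
 \frac{1}{[k-2]^2}\Bigl(\frac{3[k-1]^2}{C_3} + C_n(k+4)M_0\theta + \frac{k+2l}{L}\Bigr).
\]
The first two summands are bounded by universal constants via $[k-1]^2/[k-2]^2\leq 4$ and $(k+4)/[k-2]^2\leq 7$, and become arbitrarily small once $C_3$ and $L$ are taken large (so $\theta\leq \Omega^*/L$ is small). The hard part will be the last summand $(k+2l)/(L[k-2]^2)$, which is not uniformly bounded pointwise in $(k,l)$: for $k\in\{1,2,3\}$ it reduces to $(k+2l)/L$, which is $O(N/L)$ when $l$ is close to $N$. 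Overcoming this requires a sum-level accounting that exploits both the extra gradient surplus at $k+l=N+1$ (which contributes negatively to $-2\Psi_N$ with no matching positive counterpart) and the factorial-lag estimates from Lemma \ref{lem:comb1}, so that the pointwise-unbounded coefficient becomes uniformly absorbable upon summation. With $C_3$ a universal constant and $L$ depending only on $n$, $M_0$, and $\Omega^*$, this yields the target inequality \eqref{eq:prelimphibound}.
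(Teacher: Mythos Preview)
Your overall scheme is right, but the final paragraph names a real obstacle and the remedy you sketch does not work. For $k\geq 1$ with $k^2 < l$, the residual $\frac{k+2l}{L[k-2]^2}\,\psi_{k,l}$ coming from $\partial_t\theta^{k+2l}$ has coefficient of order $l/L$ (the case $k\in\{1,2,3\}$ is only the worst instance). No sum-level rearrangement repairs this: the terms $\psi_{k,l}$ for distinct $(k,l)$ are norms of distinct tensors, so there is no cancellation, and the surplus negative terms in $-2\Psi_N$ at level $k+l=N+1$ do not carry the indices $(1,l)$, $(2,l)$, $(3,l)$ for $l\leq N-1$ where the trouble actually lives. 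Lemma~\ref{lem:comb1} concerns convolution sums of reciprocal falling factorials and has no bearing here. Concretely, to absorb $\sum_{l=0}^{N-1}\frac{1+2l}{L}\psi_{1,l}$ into $\frac{1}{2}\sum_{l}\psi_{1,l}$ you would need $L\gtrsim N$, destroying the $N$-independence of $L$.

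The fix the paper uses is precisely the idea you already deploy at $k=0$, extended to \emph{all} $k<\sqrt{l}$. Writing $\R{k}{l}=\nabla^{(k)}\bigl(\Delta\R{0}{l-1}+Q_{0,l-1}\bigr)$ and observing that $\nabla^{(k)}Q_{0,l-1}=U_{k,l-1}+W_{k,l-1}$ (the $V$-piece being by definition the commutator with $\nabla^{(k)}$), one obtains
\[
  \phi_{k,l}\;\leq\;\frac{2n\theta[k-1]^2}{[l-2]^2}\,\psi_{k+2,l-1}\;+\;\frac{4\theta^2}{[l-2]^2}\,\mc{S}_{k,l-1}.
\]
The point is that this conversion sends $\phi_{k,l}$ to $\psi_{k+2,l-1}$ rather than to $\psi_{k,l}$, and the dangerous coefficient becomes $\frac{(k+2l)[k-1]^2}{L[l-2]^2}$, which \emph{is} bounded by $C/L$ whenever $k<\sqrt{l}$. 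For $k\geq\sqrt{l}$ your direct conversion $\phi_{k,l}=\theta\psi_{k,l}/[k-2]^2$ already works, since then $(k+2l)/[k-2]^2\leq 3k^2/[k-2]^2$ is uniformly bounded. Splitting the sum over $(k,l)$ along the curve $k=\sqrt{l}$ and applying the appropriate inequality on each side yields \eqref{eq:prelimphibound} with $C_3$ and $L_4$ independent of $N$.
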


\subsection{The evolution of $\phi_{k, l}$.}

We begin with two simple computations.
\begin{lemma}
 For any $k$, $l\in \NN_0$,  we have
\begin{align}\begin{split}\label{eq:rklevol}
 \left(\pdt - \Delta\right)|\R{k}{l}|^2 &\leq -2|\R{k+1}{l}|^2  + 4\sqrt{n}(k+4)|R||\R{k}{l}|^2 + 2\left\langle \R{k}{l}, Q_{k, l}\right\rangle
\end{split}
\end{align}
on $M\times [0, \Omega]$.
\end{lemma}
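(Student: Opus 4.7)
The statement is a standard Bochner-type computation, so my plan would be to derive it directly rather than invoke any deep machinery. The key observation is that $\R{k}{l}$ is a tensor of type $(1, k+3)$, so its pointwise norm square has the form
\[
    |\R{k}{l}|^2 = g_{a b} g^{i_1 j_1} g^{i_2 j_2}\cdots g^{i_{k+3} j_{k+3}}(\R{k}{l})^a{}_{i_1\cdots i_{k+3}}(\R{k}{l})^b{}_{j_1\cdots j_{k+3}},
\]
involving exactly $k + 4$ copies of $g$ or $g^{-1}$. Once this shape is fixed, both $\pdt |\R{k}{l}|^2$ and $\Delta |\R{k}{l}|^2$ can be computed by Leibniz, and the two standard identities combine to give the formula.

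For the Laplacian, metric compatibility of $\nabla$ means $\nabla g = \nabla g^{-1} = 0$, so the usual Bochner identity for tensors produces
\[
    \Delta|\R{k}{l}|^2 = 2\langle \R{k}{l}, \Delta \R{k}{l}\rangle + 2|\nabla \R{k}{l}|^2 = 2\langle \R{k}{l}, \Delta \R{k}{l}\rangle + 2|\R{k+1}{l}|^2,
\]
using the definition $\nabla \R{k}{l} = \R{k+1}{l}$. For the time derivative, the Leibniz expansion produces the main term $2\langle \R{k}{l}, \pdt \R{k}{l}\rangle$ plus a correction arising from hitting each of the $k+4$ metric factors with $\pdt$. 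Under the Ricci flow, $\pdt g_{ab} = -2R_{ab}$ and $\pdt g^{ab} = 2R^{ab}$, so each such correction term is a contraction of a Ricci tensor with two copies of $\R{k}{l}$.

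Subtracting, the $\Delta R^{(k,l)}$ and $\pdt R^{(k,l)}$ terms combine into $2\langle \R{k}{l}, Q_{k,l}\rangle$, the $-2|\R{k+1}{l}|^2$ term appears as claimed, and the remaining correction $E$ is a sum of at most $k+4$ terms, each bounded, via the Cauchy–Schwarz inequality and the standard bound $|\mathrm{Rc}|\leq \sqrt{n}|R|$, by $4\sqrt{n}|R|\,|\R{k}{l}|^2$ (the factor of $4$ covering the coefficient $2$ in $\pdt g^{\pm 1} = \pm 2\mathrm{Rc}$ together with the generic bound $|R^{ij}A_iB_j|\leq |\mathrm{Rc}||A||B|$ applied to the two partial contractions of $\R{k}{l}$ sharing the freed indices). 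Summing gives the stated coefficient $4\sqrt{n}(k+4)|R||\R{k}{l}|^2$.

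There is no real obstacle here: the computation is routine and the only attention required is to count metric factors correctly in $|\R{k}{l}|^2$ (giving the factor $k+4$) and to apply the Ricci bound uniformly to each correction. The reason the author records this explicit constant — rather than an anonymous $C(n,k)$ — is that the linear-in-$k$ dependence will eventually be absorbed cleanly by the weights $a_{k,l}^2\theta^{k+2l}$ in the subsequent combinatorial estimates; the precise form of the coefficient is not essential for the truth of the inequality, only for the bookkeeping in the induction that follows.
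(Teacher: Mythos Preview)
Your proposal is correct and follows essentially the same route as the paper: both arguments recognize $\R{k}{l}\in T^1_{k+3}(M)$, count the resulting $k+4$ metric factors in $|\R{k}{l}|^2$, apply the Bochner identity for $\Delta|T|^2$, and bound the time-derivative corrections via $\pdt g^{\pm 1}=\mp 2\Rc$ together with $|\Rc|\leq \sqrt{n}|R|$. Your accounting for the constant $4$ is a little loose (a sharper version of the same estimate actually gives $2\sqrt{n}(k+4)$), but the stated inequality with $4$ is certainly valid, and the paper is equally casual on this point.
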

\begin{proof}
  For any $k$ and $l$, the tensor $\R{k}{l} = \nabla^{(k)}\dt{l} R$ is a family of sections of $T_{k+3}^{1}(M)$, and
  so
  \[
	|\R{k}{l}|^2 \lt g\ast \underbrace{h\ast h \ast \cdots \ast h}_{k+3}\ast \R{k}{l}\ast \R{k}{l},
  \]
  which implies
  \begin{align*}
  \begin{split}
        \left(\pdt-\Delta\right) |\R{k}{l}|^2 &\leq -2\left|\nabla \R{k}{l}\right|^2+  4(k+4)|\Rc||\R{k}{l}|^2 \\
	    &\phantom{\leq}+ 2\left\langle \left(\pdt-\Delta\right) \R{k}{l}, \R{k}{l} \right\rangle, 
  \end{split}
  \end{align*}
  and \eqref{eq:rklevol} follows.
\end{proof}

\begin{lemma}\label{lem:phiklevol}
For any $L > 0$ and any $k$, $l\in \NN_0$, the evolution of 
$\phi_{k, l} = \phi_{k, l; L}$ can be estimated as
\begin{align}
\begin{split}\label{eq:phiklevol}
 \left(\pdt - \Delta\right)\phi_{k, l} &\leq -2\psi_{k+1, l} + \left(4\sqrt{n}(k+4)|R| + \frac{(k+2l)}{L\theta}\right)\phi_{k, l}
  \\
   &\phantom{\leq} + 2\sqrt{\mc{Q}_{k, l}\phi_{k, l}}
\end{split}
\end{align}
on $M\times (0, \Omega]$.
\end{lemma}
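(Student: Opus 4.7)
The plan is to apply the heat operator to $\phi_{k,l} = a_{k,l}^2\theta^{k+2l}|\R{k}{l}|^2$ via the Leibniz rule and then invoke the preceding estimate \eqref{eq:rklevol}. Because the weight $a_{k,l}^2\theta^{k+2l}$ depends only on $t$, its spatial gradient vanishes, so
$$\left(\pdt-\Delta\right)\phi_{k,l} = \frac{d}{dt}\!\left(a_{k,l}^2\theta^{k+2l}\right)|\R{k}{l}|^2 + a_{k,l}^2\theta^{k+2l}\left(\pdt-\Delta\right)|\R{k}{l}|^2.$$
With $\theta'(t) = 1/L$, the first summand simplifies directly to $\frac{k+2l}{L\theta}\phi_{k,l}$, which accounts for one piece of the middle contribution on the right-hand side of \eqref{eq:phiklevol}.

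For the second summand I substitute the three pieces provided by \eqref{eq:rklevol}. The reaction term $4\sqrt{n}(k+4)|R||\R{k}{l}|^2$ multiplied by the weight produces $4\sqrt{n}(k+4)|R|\phi_{k,l}$, combining with the previous step to yield the full coefficient in parentheses. The gradient term $-2|\R{k+1}{l}|^2$ multiplied by $a_{k,l}^2\theta^{k+2l}$ matches precisely the definition $\psi_{k+1,l} = a_{(k+1)-1,l}^2\theta^{(k+1)+2l-1}|\R{k+1}{l}|^2 = a_{k,l}^2\theta^{k+2l}|\R{k+1}{l}|^2$, delivering the leading $-2\psi_{k+1,l}$. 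This is exactly where the ``factorial lag'' encoded in the definition of $\psi_{k,l}$ from \eqref{eq:psidef} pays off: the shift of one in the first index of $a$ and the exponent of $\theta$ is engineered so that the dissipative term coming from the Bochner-type formula acquires precisely the weight appearing in $\psi_{k+1,l}$, with no spurious constants.

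For the inner-product term I apply Cauchy--Schwarz pointwise and split the weight symmetrically:
$$2a_{k,l}^2\theta^{k+2l}\langle \R{k}{l}, Q_{k,l}\rangle \leq 2\bigl(a_{k,l}\theta^{(k+2l)/2}|\R{k}{l}|\bigr)\bigl(a_{k,l}\theta^{(k+2l)/2}|Q_{k,l}|\bigr) = 2\sqrt{\phi_{k,l}\,\mc{Q}_{k,l}},$$
which produces the last summand of \eqref{eq:phiklevol}. There is no essential obstacle here — the lemma is a bookkeeping identity that records how the chosen time weight interacts with the inequality \eqref{eq:rklevol}; the only point requiring care is the index arithmetic that lines the weighted $|\R{k+1}{l}|^2$ term up with $\psi_{k+1,l}$. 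All the genuine difficulty in the proof of Proposition \ref{prop:prelimphibound} is deferred to the estimation of $\mc{Q}_{k,l}$, which is carried out in the subsequent sections.
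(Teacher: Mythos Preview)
Your proof is correct and essentially identical to the paper's own argument: both apply the heat operator via the Leibniz rule to $\phi_{k,l} = a_{k,l}^2\theta^{k+2l}|\R{k}{l}|^2$, invoke \eqref{eq:rklevol}, and then identify each weighted term with the corresponding piece of \eqref{eq:phiklevol}. The only cosmetic difference is that the paper records the intermediate equality $a_{k,l}^2\theta^{k+2l}|\R{k+1}{l}|^2 = \frac{[k-1]^2}{\theta}\phi_{k+1,l} = \psi_{k+1,l}$, whereas you appeal directly to the definition \eqref{eq:psidef}.
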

\begin{proof}
For any $0 < t \leq \Omega$, we have
\begin{align*}
    \left(\pdt - \Delta\right)\phi_{k, l} &= \frac{k+2l}{t}\phi_{k, l} + a^2_{k, l}\theta^{k+2l}\left(\pdt -\Delta\right)|\R{k}{l}|^2\\
	&\leq  -2a_{k, l}^2\theta^{k+2l}|\R{k+1}{l}|^2 + \left(4\sqrt{n}(k+4)|R| + \frac{k +2l}{L\theta}\right)\phi_{k, l}\\
	 &\phantom{\leq}+ 2a_{k, l}^2\theta^{k+2l}\langle \R{k}{l}, Q_{k, l}\rangle,
\end{align*}
from which \eqref{eq:phiklevol} follows, since 
\[
a_{k, l}^2\theta^{k+2l}|\R{k+1}{l}|^2 = \frac{[k-1]^2}{\theta}\phi_{k+1, l} =\psi_{k+1, l},
\]
and
\begin{align*}
    2a_{k, l}^2\theta^{k+2l}|\R{k}{l}||Q_{k, l}| = 2\sqrt{\mc{Q}_{k, l}\phi_{k, l}}.
\end{align*}
\end{proof}

Although the overall strategy in this section is much the same as in \cite{Bando}, in that, in the evolution equation for 
$\Phi_N$, we seek to exploit
the ``good'' negative  $\psi_{k, l} = [k-1]^2\phi_{k+1, l}/\theta$ terms, arising as in the right-hand side of \eqref{eq:phiklevol}, to control
all of the other ``bad'' positive terms whose coefficients contain additional factors of $k$ and $l$, and are not simply proportional
by a constant to some term $\phi_{k, l}$.  From our perspective, of the bad terms
that are presently visible in \eqref{eq:phiklevol} (and not concealed in $\mc{Q}_{k, l}$)
the most problematic is perhaps $(k+2l)\phi_{k, l}/t$,
which arises when the operator $\pdt$ falls on the factor of $\theta^{k + 2l}$.  When 
$k$ is small relative to $\sqrt l$, this term cannot
be controlled by a corresponding good term in $(\pdt -\Delta)\Phi_N$,  as the  coefficients of this good term will
be only proportional to  $k^2$.  Moreover, for $k=0$ (i.e., for the pure time-derivatives), 
there are \emph{no} corresponding good (negative) terms to balance these contributions. In
Bernstein-type arguments, such good terms arise from the application of the operator $-\Delta$ to the square of (the norm of) the solution,
and always involve at least one spatial derivative.  Fortunately, there is a 
workaround, but we will need to use two different estimates
for the evolutions of the $\phi_{k, l}$, depending on whether $k$ is large or small relative to $\sqrt{l}$. 

In the case that $k$ is small relative to $\sqrt{l}$, the idea will be to exchange one time-derivative for
 two space derivatives while playing one coefficient off the other; the basis of the estimate in this case is the following
simple inequality.
\begin{lemma}\label{lem:phi2dbd} For any $k\in \NN_{0}$, $l\in \NN$, and $L > 0$, $\phi_{k, l} = \phi_{k, l; L}$
satisfies
\begin{equation}\label{eq:phi2dbd}
 \phi_{k, l} \leq \frac{2n\theta[k-1]^2}{[l-2]^2}\psi_{k+2, l-1} + \frac{4\theta^2}{[l-2]^2}\mc{S}_{k, l-1}
\end{equation}
on $M\times [0, \Omega]$.
\end{lemma}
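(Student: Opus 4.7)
The plan is to trade one time derivative for two spatial derivatives using the heat-type equation satisfied by $R^{(0, l-1)}$; this is what produces the single factor of $\theta$ in the first term, since $\psi_{k+2, l-1}$ carries one fewer power of $\theta$ than the $\phi$-weighting that has the same total number of derivatives of $R$. By definition, $Q_{0, l-1} \dfn (\pdt - \Delta) R^{(0, l-1)}$, and since $V_{0, l-1} = 0$ (as $\nabla^{(0)}$ is the identity), we have $Q_{0, l-1} = U_{0, l-1} + W_{0, l-1}$. Applying $\nabla^{(k)}$ to the identity $R^{(0, l)} = \pdt R^{(0, l-1)} = \Delta R^{(0, l-1)} + Q_{0, l-1}$ and noting that $\nabla^{(k)} U_{0, l-1} = U_{k, l-1}$ and $\nabla^{(k)} W_{0, l-1} = W_{k, l-1}$ directly from the definitions, I would obtain
\[
R^{(k, l)} = \nabla^{(k)} \Delta R^{(0, l-1)} + U_{k, l-1} + W_{k, l-1}.
\]

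Next, since $\nabla h = 0$, the operator $\nabla^{(k)}$ passes through the trace defining $\Delta$, so $\nabla^{(k)} \Delta R^{(0, l-1)}$ is a simple contraction of $h$ with $R^{(k+2, l-1)}$. Because $|h|^2 = n$, this yields the pointwise bound $|\nabla^{(k)} \Delta R^{(0, l-1)}|^2 \leq n |R^{(k+2, l-1)}|^2$. Combining this with the elementary inequality $|A+B+C|^2 \leq 2|A|^2 + 4|B|^2 + 4|C|^2$ (obtained by splitting into $A$ and $B+C$), I would conclude
\[
|R^{(k, l)}|^2 \leq 2n |R^{(k+2, l-1)}|^2 + 4|U_{k, l-1}|^2 + 4|W_{k, l-1}|^2.
\]

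What remains is purely bookkeeping: multiply through by $a_{k, l}^2 \theta^{k+2l}$ and compute the ratios of weights. A short case analysis based on the convention $m! = 1$ for $m \leq 0$ shows that $a_{k, l}^2/a_{k+1, l-1}^2 = [k-1]^2/[l-2]^2$ and $a_{k, l}^2/a_{k, l-1}^2 = 1/[l-2]^2$, while the $\theta$-exponent differences are $\theta^{k+2l}/\theta^{(k+2)+2(l-1)-1} = \theta$ and $\theta^{k+2l}/\theta^{k+2(l-1)} = \theta^2$. These conversions turn the first term on the right-hand side of the pointwise estimate into exactly $(2n \theta [k-1]^2/[l-2]^2)\, \psi_{k+2, l-1}$ and the latter two into $(4\theta^2/[l-2]^2)(\mathcal{U}_{k, l-1} + \mathcal{W}_{k, l-1})$, which is bounded by $(4\theta^2/[l-2]^2)\mathcal{S}_{k, l-1}$. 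The only step requiring any real insight is the initial decomposition of $R^{(k, l)}$ isolating a single $\Delta$ on $R^{(0, l-1)}$; the rest is mechanical weight-chasing, and—unlike the induction estimate of the previous section—the lemma imposes no restriction on $L$.
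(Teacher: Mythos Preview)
Your proof is correct and follows essentially the same approach as the paper: both trade a time derivative for the Laplacian via $R^{(0,l)} = \Delta R^{(0,l-1)} + Q_{0,l-1}$, identify $\nabla^{(k)}Q_{0,l-1} = U_{k,l-1} + W_{k,l-1}$, and then carry out the identical weight-chasing. The only cosmetic difference is that the paper reaches that identification by writing $\nabla^{(k)}Q_{0,l-1} = -V_{k,l-1} + Q_{k,l-1}$ via the commutator, whereas you observe directly that $V_{0,l-1}=0$ and that $\nabla^{(k)}U_{0,l-1}=U_{k,l-1}$, $\nabla^{(k)}W_{0,l-1}=W_{k,l-1}$; both routes are immediate from the definitions.
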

\begin{proof}
    To begin with, we have 
\begin{equation}\label{eq:phi2dbd1}
      \R{k}{l} = \nabla^{(k)}\pdt\left(\R{0}{l-1}\right) = \nabla^{(k)}\left(\Delta\R{0}{l-1} + Q_{0, l-1}\right)
\end{equation}
for any for $l > 0$. Now, recalling \eqref{eq:uvwdef}, we can write
\begin{align*}
  \nabla^{(k)}Q_{0, l-1} &=\nabla^{(k)}\left(\left(\pdt - \Delta\right)\R{0}{l-1}\right)\\
      &= \left[\nabla^{(k)}, \left(\pdt-\Delta\right)\right]\R{0}{l-1} + \left(\pdt - \Delta\right)\nabla^{(k)}\R{0}{l-1}\\
      &= - V_{k, l-1} + Q_{k, l-1}\\
      &= U_{k, l-1} + W_{k, l-1},
\end{align*}
so, together with \eqref{eq:phi2dbd1}, we have 
\begin{align*}
    |\R{k}{l}|^2 &\leq 2|\nabla^{(k)}\Delta\R{0}{l-1}|^2 + 2|\nabla^{(k)}Q_{0, l-1}|^2\\
		 &\leq 2n|\R{k+2}{l-1}|^2 + 4|U_{k, l-1}|^2 + 4|W_{k, l-1}|^2,
\end{align*}
and, adding the weights $a_{k, l}$,
\begin{align*}
  \phi_{k, l} &\leq \frac{2n[k]_2^2}{[l-2]^2}\phi_{k+2, l-1} + \frac{4\theta^2}{[l-2]^2}\left(\mc{U}_{k, l-1} + \mc{W}_{k, l-1}\right)\\
	      &\leq \frac{2n\theta[k-1]^2}{[l-2]^2}\psi_{k+2, l-1} + \frac{4\theta^2}{[l-2]^2}\mc{S}_{k, l-1}.
\end{align*}
\end{proof}

We now combine the above two lemmas and give estimates for $\phi_{k, l}$ specialized to the relative sizes of
$k$ and $l$.
\begin{lemma}\label{lem:phikl2ndest}
Given any $M_0$ and $\Omega > 0$ and a solution $g(t)$ to \eqref{eq:rf} on $M\times [0, \Omega]$
satisfying 
\[
  \sup_{M\times [0, \Omega]}|R(x, t)|\leq M_0,
\]
there exist constants $C_3 = C_3(n)$ and  $L_4 = L_4(n, M_0, \Omega^*)$ such that for any $L \geq L_4$,
the evolution of $\phi_{k, l} = \phi_{k, l; L}$ can be estimated as follows: For $k =0$ and $l = 0$, 
\begin{equation}\label{eq:phi00est}
  \left(\pdt- \Delta\right)\phi_{0, 0} \leq -2\psi_{1, 0} + C_3M_0\phi_{0, 0}.
\end{equation}
 For $l > 0$ and $0 \leq k < \sqrt l$,
\begin{align}
\begin{split}\label{eq:phiestksmall}                 
           \left(\pdt - \Delta\right)\phi_{k, l}&\leq -2\psi_{k+1, l} + \frac{1}{4}\psi_{k+2, l-1}
  + \frac{C_3\theta}{[k-1]^2}\left(\mc{S}_{k, l} + \mc{S}_{k, l-1}\right).
\end{split}
\end{align}
For $k > 0$ and $0 \leq \sqrt{l} \leq k$,
\begin{equation}\label{eq:phiestklarge}
  \left(\pdt - \Delta\right)\phi_{k, l} \leq -2\psi_{k+1, l} + \frac{1}{4}\psi_{k, l} + \frac{C_3\theta}{[k-1]^2}\mc{S}_{k, l}.
\end{equation}
\end{lemma}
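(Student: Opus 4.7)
The plan is to argue separately in each of the three regimes by starting from Lemma \ref{lem:phiklevol} and absorbing the three ``bad'' terms on the right-hand side, namely
\[
  (A) := 4\sqrt{n}(k+4)|R|\phi_{k,l}, \quad (B) := \frac{(k+2l)}{L\theta}\phi_{k,l}, \quad (C) := 2\sqrt{\mc{Q}_{k,l}\phi_{k,l}},
\]
into the good negative term $-2\psi_{k+1,l}$ together with an appropriate ``deficit'' term ($\psi_{k+2,l-1}$ or $\psi_{k,l}$) and small $\mc{S}$-terms of the prescribed form. The case $k=l=0$ is then immediate: under Ricci flow $Q_{0,0}=(\partial_t-\Delta)R$ is a contracted product of $R$ with itself, so $|Q_{0,0}| \le c(n)|R|^2 \le c(n)M_0|R|$ and thus $(C) \le 2c(n)M_0\phi_{0,0}$; combined with $(A)$ and noting $(B)=0$, this gives \eqref{eq:phi00est}.

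For the small-$k$ regime $k<\sqrt{l}$, the obstacle is that $(B)$ contains a factor of $2l/(L\theta)$ which grows with $l$ and admits no counterbalancing coefficient in $\psi_{k+1,l}=[k-1]^2\phi_{k+1,l}/\theta$ when $k$ is small. The remedy is to ``trade a time derivative for two space derivatives'' via Lemma \ref{lem:phi2dbd}, which bounds $\phi_{k,l}$ in terms of $\psi_{k+2,l-1}$ and $\mc{S}_{k,l-1}$ with denominator $[l-2]^2$. Applied to $(A)$ and $(B)$ directly, and to the $\phi_{k,l}$-side of $(C)$ after a Cauchy-Schwarz/AM-GM split $2\sqrt{\mc{Q}_{k,l}\phi_{k,l}} \le \lambda^{-1}\mc{Q}_{k,l} + \lambda\phi_{k,l}$ with $\lambda = 3[k-1]^2/(C_3\theta)$ (and $\mc{Q}_{k,l}\le 3\mc{S}_{k,l}$), every resulting coefficient reduces to a combination of the ratios $(k+2l)[k-1]^2/[l-2]^2$, $[k-1]^4/[l-2]^2$, and $(k+4)[k-1]^2/[l-2]^2$. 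The combinatorial fact that each of these is bounded by a universal constant for $k<\sqrt{l}$ (since $[k-1]^2\le l$ and $[l-2]^2\gtrsim l^2$ for $l\ge 4$, with the low-$l$ cases checked by hand) delivers the estimate, after choosing the splitting parameter and then $L\ge L_4(n,M_0,\Omega^*)$ large enough to make each $\psi_{k+2,l-1}$-contribution from $(A),(B),(C)$ at most $\tfrac{1}{12}$.

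For the large-$k$ regime $\sqrt{l}\le k$, the situation is cleaner: since $k^2\ge l$, both $(k+2l)/[k-2]^2$ and $(k+4)/[k-2]^2$ are uniformly bounded, so $(A)$ and $(B)$ can be written directly as constant multiples of $\theta\psi_{k,l}$ (using $\psi_{k,l} = [k-2]^2\phi_{k,l}/\theta$) and each made $\le\tfrac{1}{12}\psi_{k,l}$ by taking $L$ large. For $(C)$, apply the analogous AM-GM split $2\sqrt{\mc{Q}_{k,l}\phi_{k,l}}\le \gamma\psi_{k,l} + (3\theta/(\gamma[k-2]^2))\mc{S}_{k,l}$ with a fixed $\gamma\le \tfrac{1}{12}$; the resulting $\mc{S}_{k,l}$ coefficient is then converted to the stated $C_3\theta/[k-1]^2$ form by means of the elementary comparison $[k-1]^2\le 4[k-2]^2$, valid for all $k\ge 0$.

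The main obstacle is purely combinatorial bookkeeping: the entire argument hinges on verifying that, in the asymmetric regime $k<\sqrt{l}$, the ratios of the various $[\,\cdot\,]$-weights that arise after applying Lemma \ref{lem:phi2dbd} are bounded by universal constants, with separate attention to the boundary cases $l\in\{1,2,3\}$ (where $[l-2]=1$) and $k\in\{0,1,2\}$ (where $[k-1],[k-2]$ collapse to $1$). Once these ratios are under control, all choices of the AM-GM parameter and the threshold $L_4$ can be made depending only on $n$, $M_0$, and $\Omega^*$, and $C_3$ depends only on $n$.
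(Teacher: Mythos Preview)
Your proposal is correct and follows essentially the same route as the paper: start from Lemma~\ref{lem:phiklevol}, use Lemma~\ref{lem:phi2dbd} to trade a time derivative for two space derivatives in the regime $k<\sqrt{l}$, absorb directly into $\psi_{k,l}$ when $\sqrt{l}\le k$, and control the cross term $2\sqrt{\mc{Q}_{k,l}\phi_{k,l}}$ by an AM--GM split together with $\mc{Q}_{k,l}\le 3\mc{S}_{k,l}$. The only cosmetic difference is that the paper fixes the AM--GM parameter $\epsilon$ first (independent of $C_3$) and then reads off $C_3$, whereas you parametrize $\lambda$ in terms of $C_3$ and solve self-consistently; both work, though the paper's order avoids the mild circularity. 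One small imprecision: in your small-$k$ paragraph you say $L$ is chosen to make \emph{each} of the $(A),(B),(C)$ contributions to $\psi_{k+2,l-1}$ small, but the $(C)$ contribution is governed by the splitting parameter (equivalently $C_3$), not by $L$---your earlier clause ``after choosing the splitting parameter'' already handles this, so just tighten the wording.
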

\begin{proof}
    We take $C_3 \dfn 10000n^2$, $L_4 \dfn 8 C_3(M_0\Omega^* + 1)$, assume $L \geq L_4$, and consider each case in turn.  

\noindent{\it The case $k = l=0$:} Using the standard equation for the squared norm of the Riemann curvature tensor under the Ricci flow (see, e.g., Section 7.2 of \cite{ChowKnopf}),
we have
\[
    \left(\pdt - \Delta\right)\phi_{0, 0} \leq -2|\nabla R|^2 + 16M_0\phi_{0, 0} = -2\psi_{1, 0} + 16 M_0\phi_{0, 0},
\]
which is \eqref{eq:phi00est}.

\noindent{\it The case $l > 0$ and $0 \leq k < \sqrt{l}$:} Observe that we only need to consider the last two terms
 of \eqref{eq:phiklevol} to obtain \eqref{eq:phiestksmall}.  We start with the second term.  By Lemma \ref{lem:phi2dbd}, we have
\begin{equation}\label{eq:phismallk}
  \phi_{k, l} \leq 4n\frac{\theta[k-1]^2}{[l-2]^2}\left(\psi_{k+2, l-1} + \frac{\theta}{[k-1]^2}\mc{S}_{k, l-1}\right),
\end{equation}
and since, when $k < \sqrt{l}$,
\[
    \frac{(k+4)[k-1]^2}{[l-2]^2} \leq \left(\frac{l+4}{[l-2]}\right)^2 \leq 49, \quad\mbox{and}\quad 
  \frac{[k-1]^2(k+2l)}{[l-2]^2}\leq 3\left(\frac{l+1}{[l-2]}\right)^2 \leq 48,
\]
 we therefore have
\begin{align}
\nonumber   &\left(4\sqrt{n}(k+4)|R| + \frac{(k+2l)}{L\theta}\right)\phi_{k, l}\\
\nonumber  &\qquad\leq\frac{16n^2\theta[k-1]^2}{[l-2]^2}\left((k+4)|R| + \frac{(k+2l)}{L\theta}\right)
      \left(\psi_{k+2, l-1} + \frac{\theta}{[k-1]^2}\mc{S}_{k, l-1}\right)\\
\nonumber     &\qquad \leq \frac{800n^2}{L}\left(M_0\Omega^* + 1\right)\left(\psi_{k+2, l-1} + \frac{\theta}{[k-1]^2}\mc{S}_{k, l-1}\right)\\
\label{eq:ksmallterm1}     &\qquad\leq \frac{1}{8}\psi_{k+2, l-1} + \frac{\theta}{8[k-1]^2}\mc{S}_{k, l-1},
\end{align}
using our assumption on $L_4$. 

For the last term in \eqref{eq:phiklevol}, using \eqref{eq:qsineq}, we can estimate
\begin{align}\label{eq:ipterm}
  2\sqrt{\phi_{k, l}\mc{Q}_{k, l}}\leq 
  4\sqrt{\phi_{k, l}\mc{S}_{k, l}} 
      &\leq \epsilon\frac{[k-1]^2}{\theta}\phi_{k, l} + \frac{4\theta}{\epsilon[k-1]^2}\mc{S}_{k, l},      
\end{align}
for any $\epsilon > 0$, 
and use \eqref{eq:phismallk} as before to see that 
\begin{align*}
    \frac{[k-1]^2}{\theta}\phi_{k, l} &\leq 4n\left(\frac{[k-1]^2}{[l-2]}\right)^2\left(\psi_{k+2, l-1} + \frac{\theta}{[k-1]^2}\mc{S}_{k, l-1}\right)\\
				     &\leq 64n\left(\psi_{k+2, l-1} + \frac{\theta}{[k-1]^2}\mc{S}_{k, l-1}\right),
\end{align*}
as
\[
    \left(\frac{[k-1]^2}{[l-2]}\right)^2 \leq \left(\frac{l+1}{[l-2]}\right)^2 \leq 16
\]
when $k < \sqrt{l}$.  Thus, if $\epsilon < 1/512n$, we have
\[
  \frac{\epsilon[k-1]^2}{\theta}\phi_{k, l} \leq \frac{1}{8}\psi_{k+2, l-1} + \frac{\theta}{8[k-1]^2}\mc{S}_{k, l-1},
\]
and hence, from \eqref{eq:ipterm},
\begin{align}
\label{eq:ksmallterm2}
    2\sqrt{\phi_{k, l}\mc{Q}_{k, l}} &\leq
\frac{1}{8}\psi_{k+2, l-1} + \frac{\theta}{8[k-1]^2}\mc{S}_{k, l-1} +
		\frac{2048n\theta}{[k-1]^2}\mc{S}_{k, l}.
\end{align}
Taking \eqref{eq:phiklevol} with \eqref{eq:ksmallterm1} and \eqref{eq:ksmallterm2}, we see that, for $l > 0$ and $0 \leq k < \sqrt{l}$, we have
\[
  \left(\pdt - \Delta\right)\phi_{k, l} \leq -2\phi_{k+1, l} + \frac{1}{4}\psi_{k+2, l-1}  +\frac{C_3\theta}{[k-1]^2}\left(\mc{S}_{k, l} + \mc{S}_{k, l-1}\right),
\]
which is \eqref{eq:phiestksmall}.

\noindent{\it The case $k > 0$ and $0\leq \sqrt{l} \leq k$:} The estimate is easier for $k$ in this range. As before, we just
need to estimate the last two terms in \eqref{eq:phiklevol}.  The second term on the right-hand side of \eqref{eq:phiklevol}
can be estimated by
\begin{align}
\nonumber  &\left(4n(k+4)|R| + \frac{(k+2l)}{L\theta}\right)\phi_{k, l} \\
\nonumber  &\qquad\qquad\leq 4n\left(\frac{(k+4)}{[k-2]^2}|R|\theta + \frac{k^2}{L[k-2]^2}\right)\frac{[k-2]^2}{\theta}\phi_{k, l}\\
\nonumber  &\qquad\qquad\leq \frac{36n}{L}\left(M_0\Omega^* + 1\right)\psi_{k, l}\\
\label{eq:klargeterm1}  &\qquad\qquad\leq \frac{1}{8}\psi_{k, l}
\end{align}
for $t >0$, provided $L \geq L_4$, and the last term from the same equation can be estimated simply by
\begin{equation}\label{eq:klargeterm2}
  2\sqrt{\phi_{k, l}\mc{Q}_{k, l}} \leq  \frac{[k-2]^2}{8\theta}\phi_{k, l} + \frac{8\theta}{[k-2]^2}\mc{Q}_{k, l}
\leq  \frac{1}{8}\psi_{k, l} + \frac{24\theta}{[k-1]^2}\mc{S}_{k, l},
\end{equation}
using \eqref{eq:psiphirel} and \eqref{eq:qsineq} again.
Taken together, \eqref{eq:phiklevol}, \eqref{eq:klargeterm1} and \eqref{eq:klargeterm2}  yield
\[
  \left(\pdt - \Delta\right)\phi_{k, l} \leq -2\phi_{k+1, l} + \frac{1}{4}\psi_{k, l} + \frac{C_3\theta}{[k-1]^2}\mc{S}_{k, l},
\]
which is \eqref{eq:phiestklarge}.
\end{proof}

\subsection{Proof of Proposition \ref{prop:prelimphibound}}

Now we put everything together we have so far to obtain a preliminary estimate on the evolution of $\Phi_{N}$.

\begin{proof}  Let $C_3$ and $L_4$ be as in Lemma \ref{lem:phikl2ndest}. We choose $C_2 \geq 2C_3$, $L_3\geq L_4$, 
and assume that $L \geq L_3$.The case $N = 0$ follows immediately from $\eqref{eq:phi00est}$, so we assume that $N \geq 1$. 
We write $\Phi_N = \Phi_{N, L}$ as
\[
\Phi_{N} = \phi_{0, 0} + \sum_{\stackrel{0 < k + l\leq N}{0\leq k < \sqrt{l}}}\phi_{k, l} 
      +\sum_{\stackrel{0< k + l\leq N}{0\leq \sqrt{l} \leq k}} \phi_{k, l}
\]
and then combine the estimates in Lemma \ref{lem:phikl2ndest} to obtain
\begin{align*}
\begin{split}
 &\left(\pdt - \Delta\right)\Phi_N\\
&\qquad\leq -2\sum_{0\leq k+l\leq N} \psi_{k+1, l} + \sum_{\stackrel{0 < k + l\leq N}{0\leq k < \sqrt{l}}}\left(\frac{1}{4}\psi_{k+2, l-1}
	+ \frac{C_3\theta}{[k-1]^2}\left(\mc{S}_{k, l} + \mc{S}_{k, l-1}\right)\right)\\
 &\qquad\phantom{\leq} + \sum_{\stackrel{0< k + l\leq N}{0\leq \sqrt{l} \leq k}}\left( \frac{1}{4}\psi_{k, l} 
	+ \frac{C_3\theta}{[k-1]^2}\mc{S}_{k, l}\right)+C_3M_0\phi_{0, 0}.
\end{split}
\end{align*}
Now, by re-indexing (and using, for example, that $k\leq \sqrt{l}$ and $1\leq l \leq N$ implies $k+2\leq N +1$), we see that
\[
  \left(\pdt - \Delta\right)\Phi_N \leq -\frac{3}{2}\Psi_N + 2C_3\theta\sum_{m=1}^{N}\sum_{k=0}^{m}\frac{\mc{S}_{k, m-k}}{[k-1]^2} +  C_3M_0\phi_{0, 0}, 
\]
which is \eqref{eq:prelimphibound}.
\end{proof}

\section{Estimation of the reaction and commutator terms.}

To prove Proposition \ref{prop:phiev}, it remains to estimate the term
\[
  \sum_{m=1}^{N}\sum_{k=0}^{m}\frac{\theta\mc{S}_{k, m-k}}{[k-1]^2}
\]
from \eqref{eq:prelimphibound}. We split this sum into the three terms
corresponding to $\mc{U}_{k, l}$, $\mc{V}_{k, l}$, and $\mc{W}_{k, l}$, and consider each in turn.
First, we use the induction hypothesis to obtain bounds on the derivatives of the inverse metric
whose factors crop up in the formulas for these terms.
In view of Lemma \ref{lem:prodsum}, we only need to bound the weighted sum of these derivatives. 
\subsection{Inductive bounds on the derivatives of the inverse of the metric.}

The next lemma is nothing more than a quantitative statement of the following two basic observations:
first, any iterated covariant derivative of the $l$-th time derivative of $g$ 
can be controlled by the same covariant derivative of the $l-1$-th time derivative of $R$ in view of \eqref{eq:rf},
and, second, bounds on the mixed derivatives of $g$ up to a total order $N$ derived from \eqref{eq:mainestvar}
 imply bounds of an identical form on the same derivatives of $h$ 
via the identity $h^{ik}g_{jk} = \delta_{j}^i$.  That the implied estimates in this latter observation are of same form (i.e., same order of magnitude)
as those on $g$, is made possible by the ``factorial lag'' built into the coefficients $a_{k, l}$.

\begin{lemma}\label{lem:invmetbounds} For any $M$, $\Omega$, and $\delta > 0$, there exists
a constant
$L_5 = L_5(n, M, \Omega^*, \delta)$ such that whenever $L \geq L_5$ and 
\[
  \sup_{M\times [0, \Omega]} \Phi_{N; L}(x, t) \leq M,
\]
for some $N\in \NN_{0}$, then
\[
    \sup_{M\times [0, \Omega]} X_{N+1; L}(x, t) \leq (1+\delta)n.
\]
\end{lemma}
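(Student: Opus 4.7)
The plan is to exploit the matrix identity $hg = I$ together with the flow equation $\pdt g = -2\Rc$ to reduce derivatives of $h$ to derivatives of $R$, and to close the argument by induction on $N$.

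First, I would bound the weighted sum of derivatives of $g$.  Since $\nabla g = 0$, $\g{k}{0} = 0$ for $k\geq 1$ and $|\g{0}{0}|^2 = n$; for $l\geq 1$, $\g{k}{l} = -2\,\Rc^{(k,l-1)} \lt C\,\R{k}{l-1}$ because $\Rc$ is a non-metric contraction of $R$.  The factorial-lag ratio $a_{k,l}^2\theta^{k+2l}/(a_{k,l-1}^2\theta^{k+2(l-1)}) = \theta^2/[l-2]^2 \leq \theta^2$ then yields
\[
Y_{N+1;L} \dfn \sum_{0\leq k+l\leq N+1} a_{k,l}^2\theta^{k+2l}|\g{k}{l}|^2 \leq n + C\theta^2 \Phi_{N;L} \leq n + \frac{CM(\Omega^*)^2}{L^2}
\]
whenever $\Phi_{N;L}\leq M$.

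Next, I would set up a recursion for $\h{k}{l}$.  Applying $\nabla^{(k)}\pdt^l$ to $h^{ab}g_{bc} = \delta^a_c$ gives, via Leibniz, $\sum_{(k_1,l_1)}\binom{k}{k_1}\binom{l}{l_1}\h{k_1}{l_1}*\g{k-k_1}{l-l_1} = 0$ for $k+l\geq 1$.  Isolating the $(k_1,l_1)=(k,l)$ term and multiplying by $h$ (using $h*g = I$) yields, for $k+l\geq 1$,
\[
\h{k}{l} = -h*h*\g{k}{l} - h*\sum_{0 < k_1+l_1 < k+l}\binom{k}{k_1}\binom{l}{l_1}\,\h{k_1}{l_1}*\g{k-k_1}{l-l_1},
\]
expressing $\h{k}{l}$ through $\g{k}{l}$ and strictly-lower-order $\h{k'}{l'}$.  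Squaring, $\chi_{k,l} \leq 2n^2\tilde v_{k,l} + 2n\cdot a_{k,l}^2\theta^{k+2l}|S_{k,l}|^2$, where $\tilde v_{k,l} \dfn a_{k,l}^2\theta^{k+2l}|\g{k}{l}|^2$ and $S_{k,l}$ denotes the pointwise absolute-value version of the sum.

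I would then induct on $N$.  The base case $N=0$ is direct: $X_{1;L} = n + \chi_{1,0} + \chi_{0,1}$ with $\chi_{1,0}=0$ (since $\nabla h = 0$) and $\chi_{0,1} = 4\theta^2|h\,\Rc\,h|^2 \leq Cn^2M\theta^2 \leq \delta n$ for $L$ large.  For the inductive step, $\Phi_{N;L}\leq M$ implies $\Phi_{N-1;L}\leq M$ and so $X_{N;L}\leq (1+\delta)n$ by the inductive hypothesis.  The Cauchy--Schwarz computation underlying Lemma \ref{lem:prodsum}, applied to $S_{k,l}$---which notably excludes the $(k_1,l_1)=(0,0)$ Leibniz term (already extracted as the source $h*h*\g{k}{l}$)---yields
\[
\sum_{1\leq k+l\leq N+1} a_{k,l}^2\theta^{k+2l}|S_{k,l}|^2 \leq C\, X_{N;L}\,\bigl(Y_{N+1;L} - n\bigr),
\]
and therefore $X_{N+1;L}-n \leq 2n^2(Y_{N+1;L}-n) + 2nCX_{N;L}(Y_{N+1;L}-n) \leq C''n^2 M(\Omega^*)^2/L^2 \leq \delta n$ for $L\geq L_5 = L_5(n,M,\Omega^*,\delta)$ chosen sufficiently large.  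The main obstacle is arranging for the $O(\theta^2)$ factor $(Y_{N+1;L}-n)$ to appear from the Cauchy--Schwarz step rather than $Y_{N+1;L} = O(1)$, which is precisely what lets the $\theta^2$-smallness propagate into $X_{N+1;L}-n$ and defeat the self-referential structure of the recursion.
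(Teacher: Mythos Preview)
Your argument is correct and follows essentially the same route as the paper's: both proofs induct on the order, differentiate the identity $h^{ik}g_{kj}=\delta^i_j$ via Leibniz, convert the $g$-derivatives to $R$-derivatives through $\pdt g = -2\Rc$ (picking up the crucial $\theta^2$ factor), and close with the Cauchy--Schwarz/factorial-lag estimate underlying Lemma~\ref{lem:prodsum} to obtain $X_{P+1}-n \leq C\theta^2 X_P\Phi_P$. Your packaging via the auxiliary quantity $Y_{N+1;L}$ is a harmless variant of the paper's direct conversion to $\phi$; one small expository slip is that the factor $(Y_{N+1;L}-n)$ in your product bound actually arises because $S_{k,l}$ excludes the endpoint $(k_1,l_1)=(k,l)$ (so $g^{(0,0)}$ never appears), not the endpoint $(k_1,l_1)=(0,0)$ --- but since your definition of $S_{k,l}$ correctly excludes both, the stated inequality holds.
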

\begin{proof} For now, we will take $L_5$ to be a large positive constant whose value (to depend only on $n$, $\Omega^*$, and $M$)
 we will specify later. We assume that $L\geq L_5$ and argue by induction.  Note that $X_{0; L}(x, t) = |h|^2(x, t) \equiv n$.
Suppose then that we have 
\[
    \sup_{M\times [0, \Omega]}X_{P; L} \leq (1+\delta)n
\]
for some $0\leq P \leq N$.
Applying the operator $\nabla^{(k)}\dt{l}$ to the identity $h^{ik}g_{kj} = \delta^i_j$, and using that
$\h{p}{0} = \nabla^{(p)}h \equiv 0$ and $g^{(p, 0)} = \nabla^{(p)}g \equiv 0$ when $p > 0$, we obtain that
\begin{align*}
  |\h{k}{l}| &\leq |g^{(k, l)}| + \frac{1}{\sqrt{n}}\sum_{p=0}^k\sum_{q=1}^{l-1}\binom{k}{p}\binom{l}{q}|\h{p}{q}||g^{(k-p, l-q)}|\\
	     &\leq 2\sqrt{n}|\R{k}{l-1}| + 2\sum_{p=0}^k\sum_{q=1}^{l-1}\binom{k}{p}\binom{l}{q}|\h{p}{q}||R^{(k-p, l-q-1)}|
\end{align*}
for $k\in \NN_0$ and $l \in \NN$. So we have $\chi_{0, 0} = |h| = \sqrt n$, and
\begin{align}
\nonumber \chi_{k, l}^{1/2} &\leq a_{k, l}\theta^{k/2+l}|\h{k}{l}|\\
\nonumber	  &\leq \frac{2\sqrt{n}\theta}{[l-2]}\phi^{1/2}_{k, l-1} +
		      2\theta \sum_{p=0}^k\sum_{q=1}^{l-1}\frac{[k]_2[l]_2}{[p]_2[k-p]_2[q]_2[l-q]_3}\chi_{p, q}^{1/2}\phi^{1/2}_{k-p,l-q-1}\\
\nonumber	  &\leq C\theta\sum_{p=0}^k\sum_{q=0}^{l-1}\frac{[k]_2[l]_2}{[p]_2[k-p]_2[q]_2[l-q]_2}\chi_{p, q}^{1/2}\phi^{1/2}_{k-p,l-q-1},
\end{align}
for some $C = C(n)$. Since, as usual,
\[
      \sum_{p=0}^k\sum_{q=0}^{l-1}\left(\frac{[k]_2[l]_2}{[p]_2[k-p]_2[q]_2[l-q]_2}\right)^2 \leq
C \sum_{p=0}^k\sum_{q=0}^{l}\frac{[k]_4[l]_4}{[p]_4[k-p]_4[q]_4[l-q]_4}\leq C 
\]
by Lemma \ref{lem:comb1}, we have
\begin{align}\label{eq:chiest}	  
    \chi_{k, l} &\leq C\theta^2\sum_{\stackrel{p+q = k}{r+s = l-1}}\chi_{p, r}\phi_{q,s},
\end{align}
for another constant $C = C(n)$.

Fixing any $(x, t)\in M\times [0, \Omega]$,  
summing over all $k$ and $l$ such that $0\leq k + l \leq P+1$, and using again that $\chi_{0, 0} \equiv n$ and $\chi_{k, 0} \equiv 0$ for $k > 0$,
we have
\begin{align*}
 X_{P+1} &= n +\sum_{0\leq k + l \leq P}\chi_{k, l+1},
\end{align*}
and so, by \eqref{eq:chiest}, that
\begin{align*}
      X_{P+1}  &\leq n + C\theta^2\sum_{0\leq k+l \leq P}\sum_{\stackrel{p+q = k}{r+s = l}}\chi_{p, r}\phi_{q, s}\\
       &\leq n + C\theta^2X_{P}\Phi_{P}.
\end{align*}
Thus, by the induction hypothesis,
\[
       X_{P+1} \leq n\left(1 + \frac{(1+\delta)Ct^2M}{L^2}\right),
\]
from which we obtain the desired inequality, for example, with the choice 
\[
  L_5 \dfn \Omega^*\sqrt{\frac{(1+\delta)CM}{\delta}}.
\]
\end{proof}

\subsection{The reaction term $\mc{U}_{k, l}$}

We begin by recalling the evolution equation for the $(3, 1)$ curvature tensor $R$ under the Ricci flow (cf., e.g.,
\cite{ChowKnopf}, p. 177):
\begin{align}
\begin{split}\label{eq:revol}
  \left(\pdt-\Delta\right) R_{ijk}^l &= h^{rs}\left(R_{ijr}^pR_{psk}^l - 2R_{rik}^pR_{jsp}^l + 2R_{rip}^lR_{jsk}^p\right)\\
		&\phantom{=} -h^{rs}\left(R^p_{irs}R_{pjk}^l + R^p_{jrs}R_{ipk}^l + R_{krs}^pR_{ijp}^l - R_{prs}^lR_{ijk}^p\right).
              \end{split}
\end{align}
According to our convention, we may express this as
\begin{equation}\label{eq:revol2}
  \left(\pdt-\Delta\right)R = Q_{0, 0} \lt  (4n+5) h\ast R\ast R.
\end{equation}

\begin{proposition}
 There exists $C_4 = C_4(n)$ such that, for any $N\in \NN_0$ and any $L > 0$,
$X_N  = X_{N; L}$, $\Phi_N = \Phi_{N; L}$, and the quantity $\mc{U}_{k, l}$ associated
to the evolution of $\Phi_N$ satisfy
\begin{equation}\label{eq:uklest}
    \sum_{0\leq k+l\leq N}\mc{U}_{k, l} \leq C_4 X_N \Phi_N^2
\end{equation}
on $M\times [0, \Omega]$.
\end{proposition}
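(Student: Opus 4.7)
The plan is to unwind $U_{k,l}$ via the evolution equation \eqref{eq:revol2} for $Q_{0,0}$ and then apply Lemma \ref{lem:prodsum} in its $j$-fold form, as stated in the Remark following it, to the triple product $h \ast R \ast R$. This reduces the left-hand side of \eqref{eq:uklest} to products of sums that are by definition $X_N$ and $\Phi_N$.

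To begin, \eqref{eq:revol2} gives $Q_{0,0} \preceq (4n+5)\, h \ast R \ast R$, and since, by the Remark in Section 2.2, such ``$\preceq$''-expressions may be differentiated term by term via the Leibniz rule in both space and time, I would apply $\nabla^{(k)}\dt{l}$ to both sides to obtain
\[
U_{k,l} = \nabla^{(k)}\dt{l} Q_{0,0} \preceq (4n+5)\, \nabla^{(k)}\dt{l}(h \ast R \ast R).
\]
Taking norms (recalling that, by design, the asterisk notation here conceals no metric contractions, so $|h \ast R \ast R| \leq |h||R||R|$ up to the number-of-terms factor), squaring, and multiplying by $a_{k,l}^2 \theta^{k+2l}$, one arrives at
\[
\mc{U}_{k,l} \leq (4n+5)^2\, a_{k,l}^2 \theta^{k+2l}\, \bigl|\nabla^{(k)}\dt{l}(h \ast R \ast R)\bigr|^2.
\]

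Next, I would sum over $0 \leq k + l \leq N$ and invoke the iterated version of Lemma \ref{lem:prodsum}. Applied first to the contraction $h \ast R$, the product lemma bounds the weighted sum by $C(n)\, X_N \Phi_N$; a second application to $(h \ast R) \ast R$ then bounds the resulting sum by $C(n)^2\, X_N \Phi_N^2$. Equivalently, one may quote the $j=3$ case of the Remark directly. The crucial point is that the weights $a_{p,q}^2 \theta^{p+2q}$ that Lemma \ref{lem:prodsum} attaches to $h$ and to each factor of $R$ align exactly with the definitions \eqref{eq:phichidef} of $\chi_{p,q}$ and $\phi_{p,q}$, so the three sums produced by iterating the product lemma really \emph{are} $X_N$, $\Phi_N$, and $\Phi_N$. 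Setting $C_4 \dfn (4n+5)^2 C(n)^2$ then yields \eqref{eq:uklest}.

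I do not foresee any serious obstacle here: the proposition is essentially an immediate payoff from the factorial-lag coefficients $a_{k,l}$ and from Lemma \ref{lem:prodsum}. The only mildly delicate bookkeeping is verifying that the combinatorial bound from Lemma \ref{lem:comb1} survives the threefold rather than the twofold Leibniz expansion, but this is precisely what the $j$-fold form of the Remark handles. Note, in particular, that no use of the induction hypothesis on $\Phi_N$ is made at this stage; $U_{k,l}$ is the most straightforward of the three reaction and commutator terms, and the bounds on $\Phi_N$ and $X_N$ will only be invoked later, when the present estimate is combined with the forthcoming estimates on $\mc{V}_{k,l}$ and $\mc{W}_{k,l}$ and fed back into Proposition \ref{prop:prelimphibound}.
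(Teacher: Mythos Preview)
Your proposal is correct and mirrors the paper's own proof essentially step for step: set $A = h\ast R\ast R$, use \eqref{eq:revol2} to bound $|U_{k,l}|^2 \leq (4n+5)^2 |A^{(k,l)}|^2$, then iterate Lemma \ref{lem:prodsum} to get $\sum a_{k,l}^2\theta^{k+2l}|A^{(k,l)}|^2 \leq C X_N \Phi_N^2$ and take $C_4 = (4n+5)^2 C$. Your additional commentary about why no induction hypothesis is needed here is accurate and matches the paper's organization.
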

\begin{proof}
   Put $A = h\ast R\ast R$. For any $k$, $l\in \NN_0$, we have 
\[
    |U_{k, l}|^2 \leq \left|\nabla^{(k)}\dt{l}Q_{0, 0}\right|^2 \leq (4n + 5)^2 |A^{(k, l)}|^2,
\]
by \eqref{eq:revol2}. Iterating the result of Lemma \ref{lem:prodsum} implies that there exists a constant $C = C(n)$
such that
\begin{align*}
  \sum_{0\leq k+l\leq N} a_{k, l}^2\theta^{k+2l}|\A{k}{l}|^2 
      &\leq C\left(\sum_{0\leq k+l \leq N} \chi_{k, l}\right)\left(\sum_{0\leq k+l \leq N}\phi_{k, l}\right)^2\\
      &\leq CX_N \Phi_N^2.
\end{align*}
Thus \eqref{eq:uklest} follows with $C_4 \geq (4n+5)^2C$.
\end{proof}

\subsection{The commutator term $\mc{V}_{k, l}$}

Here we borrow a formula from \cite{Bando} and \cite{RFV2P2}
 for the commutator of the $k$-fold covariant derivative with the heat operator associated
to a solution of the Ricci flow.

\begin{lemma}[\cite{Bando}; \cite{RFV2P2}, Corollary 13.27]\label{lem:nablakheatcomm}
If $T$ is a tensor of rank $\rho$, and $g(t)$ is a solution to the Ricci flow, then
\begin{equation*}
  \left|\left[\nabla^{(k)}, \left(\pdt-\Delta\right)\right] T\right| \leq 3n(\rho+1)\sum_{i=0}^k(i+k+4)\frac{(k+1)!}{(i+2)!(k-i)!}|\R{i}{0}||\T{k-i}{0}|.
\end{equation*}
\end{lemma}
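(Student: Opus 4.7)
The plan is to prove the bound by induction on $k$, exploiting the operator recursion
$$\bigl[\nabla^{(k+1)}, (\pdt - \Delta)\bigr]T \;=\; \nabla\bigl(\bigl[\nabla^{(k)}, (\pdt - \Delta)\bigr]T\bigr) \;+\; \bigl[\nabla, (\pdt - \Delta)\bigr]\bigl(\nabla^{(k)} T\bigr).$$
For the base case $k = 1$, I would compute the two elementary commutators directly. Under \eqref{eq:rf} the variation of the Christoffel symbols satisfies $\pdt \Gamma \lt c\,h \ast \nabla R$ (using the contracted second Bianchi identity to replace $\nabla\Rc$ by a trace of $\nabla R$), so $[\nabla, \pdt]T$ is a sum, one term per slot of $T$, of contractions of $\nabla R$ with $T$. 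The classical identity $[\nabla, \Delta]T \lt c(h \ast R \ast \nabla T + h \ast \nabla R \ast T)$ with constants linear in $\rho$ supplies the remaining terms, and collecting them verifies the $k=1$ case with the claimed constant $3n(\rho+1)$ and the correct coefficients $(i+5)\cdot 2/((i+2)!(1-i)!)$ for $i \in \{0,1\}$.

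For the inductive step, differentiating the level-$k$ bound term by term (which is valid for $\lt$-inequalities, as observed in the remark following the definition of $\lt$), each parent $|\R{i}{0}||\T{k-i}{0}|$ produces two descendants at level $k+1$: $|\R{i+1}{0}||\T{k-i}{0}|$ and $|\R{i}{0}||\T{k-i+1}{0}|$, each inheriting the parent's numerical coefficient. The second piece of the recursion is handled by applying the base case to the rank-$(\rho+k)$ tensor $\nabla^{(k)}T$, producing two further contributions supported at $i \in \{0,1\}$. Reorganizing by the index $i$ of $|\R{i}{0}||\T{k+1-i}{0}|$, the level-$(k+1)$ coefficient is the sum of the level-$k$ coefficients at indices $i$ and $i-1$, together with boundary contributions near $i = 0$.

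The main obstacle is matching this sum to the claimed closed form $3n(\rho+1)(i+k+5)(k+2)!/((i+2)!(k+1-i)!)$. This reduces to a Pascal-type identity involving the factorials together with the linear prefactors $i+k+4$ and $i+k+3$, which I would check by direct algebra. Two features of the bound are essential for the induction to close. First, the factorial lag $(i+2)!$ (rather than $i!$) in the denominator is what allows the two parent coefficients at indices $i$ and $i-1$ to combine without generating a spurious overall factor of $k$; this is the same \emph{Lax--Kinderlehrer--Nirenberg} mechanism already exploited in Lemma \ref{lem:comb1}. Second, the prefactor $\rho+1$ (rather than $\rho + k + 1$) is preserved because each base-case application to $\nabla^{(k)}T$ generates a factor $\rho + k + 1 \leq (\rho+1)(k+1)$, whose $k+1$ is absorbed harmlessly into the growing linear factor $i+k+4$. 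Verifying that these two compensations match exactly — and choosing the universal constant $3n$ so that both the base case and the inductive step are covered — is the delicate numerical step; the argument, originally worked out in \cite{Bando}, is reproduced essentially verbatim in Corollary 13.27 of \cite{RFV2P2}, to which we defer for full details.
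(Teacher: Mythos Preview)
The paper does not supply a proof of this lemma at all: it is simply quoted from \cite{Bando} and \cite{RFV2P2}, Corollary~13.27, as a ready-made formula. Your proposal is therefore not comparable to a ``paper's own proof'' --- there isn't one --- but your sketch of the inductive argument (base case $k=1$ from the explicit commutators $[\nabla,\pdt]$ and $[\nabla,\Delta]$, then the operator recursion $[\nabla^{(k+1)},(\pdt-\Delta)] = \nabla[\nabla^{(k)},(\pdt-\Delta)] + [\nabla,(\pdt-\Delta)]\nabla^{(k)}$) is exactly the route taken in the cited references, and since you ultimately defer to those same sources for the numerical verification, your treatment is entirely consistent with the paper's.
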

Thus, for any $k$, $l\in \NN_0$, we can apply the above to $T = \R{0}{l}$ to obtain
\begin{align}
\begin{split}\label{eq:vkl}
     |V_{k, l}| \leq 30n\sum_{i=0}^k\binom{k+2}{i+2}|\R{i}{0}||\R{k-i}{l}|.
\end{split}
\end{align}

\begin{proposition}\label{prop:vklest}
  There exists a constant $C_5= C_5(n)$ such that, for any $N\in \NN_0$ and $L > 0$, 
\begin{equation}\label{eq:vklest}
  \sum_{1\leq k + l \leq N}\frac{\mc{V}_{k, l}}{[k-1]^2} \leq C_5\theta\Phi_{N}\Psi_N.
\end{equation}
\end{proposition}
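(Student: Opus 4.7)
The plan is to follow the template of Lemma \ref{lem:prodsum} applied to the pointwise bound \eqref{eq:vkl}. Since $\nabla^{(0)}$ is the identity we have $V_{0,l}\equiv 0$, so I would restrict to $k\geq 1$ throughout. Multiplying \eqref{eq:vkl} by $a_{k,l}\theta^{k/2+l}/[k-1]$ and using the identities $|\R{i}{0}|=\phi_{i,0}^{1/2}/(a_{i,0}\theta^{i/2})$ and $|\R{k-i}{l}|=\psi_{k-i,l}^{1/2}/(a_{k-i-1,l}\theta^{(k-i-1)/2+l})$, I would rewrite each summand with $0\leq i\leq k-1$ as a scalar multiple of $\theta^{1/2}\phi_{i,0}^{1/2}\psi_{k-i,l}^{1/2}$. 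The boundary index $i=k$, where $\psi_{0,l}$ is not defined, I would handle by the reverse allocation, associating $|\R{k}{0}|$ with $\psi_{k,0}^{1/2}$ and $|\R{0}{l}|$ with $\phi_{0,l}^{1/2}$ (both well-defined since $k\geq 1$). The resulting coefficients are
\[
\gamma_{k,l,i}\dfn \frac{a_{k,l}\binom{k+2}{i+2}}{[k-1]\,a_{i,0}\,a_{k-i-1,l}},\qquad \tilde\gamma_{k,l}\dfn \frac{a_{k,l}}{[k-1]\,a_{k-1,0}\,a_{0,l}},
\]
and the factor of $(l-2)!$ cancels in both.

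Squaring the resulting bound and applying Cauchy-Schwarz in the form $(\sum\gamma_i a_i^{1/2}b_i^{1/2})^2\leq (\sum\gamma_i)(\sum\gamma_i a_ib_i)$, I would obtain
\[
\frac{\mc V_{k,l}}{[k-1]^2}\leq C\,\theta\,\Big(\sum_{i=0}^{k-1}\gamma_{k,l,i}+\tilde\gamma_{k,l}\Big)\Big(\sum_{i=0}^{k-1}\gamma_{k,l,i}\,\phi_{i,0}\psi_{k-i,l}+\tilde\gamma_{k,l}\,\phi_{0,l}\psi_{k,0}\Big).
\]
A direct factorial manipulation (using $[k-1](k-2)!=(k-1)!$ for $k\geq 2$ and $m!=1$ for $m\leq 0$) shows that, up to universal constants, $\gamma_{k,l,i}\leq C\,[k]_3/([i+2]_4[k-i]_3)$, so by Lemma \ref{lem:comb1} applied to $\sum_i 1/([i+2]_4[k-i]_3)$, both $\sum_i\gamma_{k,l,i}$ and each individual $\gamma_{k,l,i}$ (and likewise $\tilde\gamma_{k,l}$) are bounded by a constant depending only on $n$. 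This reduces the previous inequality to
\[
\frac{\mc V_{k,l}}{[k-1]^2}\leq C\,\theta\sum_{i=0}^{k-1}\phi_{i,0}\psi_{k-i,l}+C\,\theta\,\phi_{0,l}\psi_{k,0}.
\]

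Finally, I would sum over $1\leq k+l\leq N$ and reindex by $p=i$, $q=k-i$. In the main contribution the $\phi$-indices $(p,0)$ satisfy $p\leq N$ while the $\psi$-indices $(q,l)$ satisfy $q\geq 1$ and $q+l\leq N$, placing them in the ranges summed by $\Phi_N$ and $\Psi_N$ respectively; the boundary contribution, involving $\phi_{0,l}$ and $\psi_{k,0}$, is treated symmetrically. Decoupling the double sum then produces the desired estimate $C_5\theta\,\Phi_N\Psi_N$ for some $C_5=C_5(n)$.

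The main obstacle is not conceptual but combinatorial: one must verify that the quotients $\gamma_{k,l,i}$ are both pointwise and summably bounded, which requires careful handling of the edge cases $i\in\{0,1,2\}$ and $k-i\in\{1,2,3\}$ where the naively appearing factorials of negative integers must be read via the $m!=1$ convention. Roughly speaking, the $(k+2)(k+1)k$ in the numerator of $\binom{k+2}{i+2}$ is outweighed by the $(i+2)(i+1)i(i-1)(k-i)(k-i-1)(k-i-2)$ that appears in the denominator after absorbing the factorial-lag of $a_{k,l}$, and it is this cancellation that is quantified by Lemma \ref{lem:comb1}.
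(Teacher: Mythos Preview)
Your proposal is correct and follows essentially the same approach as the paper: both split off the boundary index $i=k$ (pairing $\psi_{k,0}$ with $\phi_{0,l}$), convert the remaining terms with $0\le i\le k-1$ into products $\phi_{i,0}\psi_{k-i,l}$, apply Cauchy--Schwarz together with Lemma~\ref{lem:comb1} to control the factorial coefficients, and then decouple the sum over $k+l\le N$ into $\Phi_N\Psi_N$. Your organization --- dividing by $[k-1]$ before squaring and dropping the extraneous $1/[l]_2^2$ factor that the paper carries through --- is slightly cleaner in the final summation step, but the argument is otherwise the same.
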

\begin{proof} Observe that $V_{0, l} = 0$, so we may assume $k\geq 1$ in what follows.
Starting with \eqref{eq:vkl}, and using  $C$ to denote a series of constants depending only on $n$, 
we have
\begin{align*}
  \mc{V}_{k, l} 
	        &\leq Ca_{k, l}^2\theta^{k+2l}\left(\sum_{i=0}^{k}\binom{k+2}{i+2}|\R{i}{0}|
|\R{k-i}{l}| \right)^2\\
    &\leq \frac{C}{[l]_2^2}
	\left(\sum_{i=0}^{k}\frac{[k+2]_4}{[i+2]_4[k-i]_2}\sqrt{\phi_{i, 0}\phi_{k-i, l}}\right)^2\\
    &\leq \frac{C}{[l]_2^2}\phi_{k, 0}\phi_{0, l} + \frac{C}{[l]_2^2}
	\left(\sum_{i=0}^{k-1}\frac{[k+2]_4}{[i+2]_4[k-i]_2}\sqrt{\phi_{i, 0}\phi_{k-i, l}}\right)^2\\
    &\leq \frac{C\theta}{[k-2]^2[l]^2_2}\psi_{k, 0}\phi_{0, l} + 
	\frac{C\theta}{[l]_2^2}\left(\sum_{i=0}^{k-1}\frac{[k+2]_4}{[i+2]_4[k-i]_2[k-i-2]}\sqrt{\phi_{i, 0}\psi_{k-i, l}}\right)^2.
\end{align*}
Using Cauchy-Schwarz and Lemma \ref{lem:comb1}, and arguing as in Lemma \ref{lem:prodsum},
the square in the second term can be estimated as
\begin{align*}
 &\left(\sum_{i=0}^{k-1}\frac{[k+2]_4}{[i+2]_4[k-i]_2[k-i-2]}\sqrt{\phi_{i, 0}\psi_{k-i, l}}\right)^2\\
      &\qquad\qquad\leq \left(\sum_{i=0}^{k-1}\frac{[k+2]_4^2}{[i+2]_4^2[k-i]_2^2[k-i-2]^2}\right)
	\left(\sum_{i=1}^{k}\phi_{k-i, 0}\psi_{i, l}\right)\\
      &\qquad\qquad\leq C[k]_8\left(\sum_{i=0}^{k-1}\frac{1}{[i]_8[k-i]_6}\right)\left(\sum_{i=1}^{k}\phi_{k-i, 0}\psi_{i, l}\right)\\
      &\qquad\qquad\leq C[k-1]^2\left(\sum_{i=1}^{k}\phi_{k-i, 0}\psi_{i, l}\right).
  \end{align*}
Thus, using $\mc{V}_{0, l} \equiv  0$, we have
\begin{align*}
   &\sum_{1\leq k + l\leq N}\frac{\mc{V}_{k, l}}{[k-1]^2} =\sum_{m=1}^N\sum_{k=1}^m\frac{\mc{V}_{k, m-k}}{[k-1]^2}\\ 
&\quad \leq C\theta
    \sum_{m=1}^N\sum_{k=1}^m\left(\psi_{k, 0}\phi_{0, m-k} 
      +  \sum_{i=1}^{k}\frac{\phi_{k-i, 0}\psi_{i, m-k}}{[m-k]_2^2}\right)\\
 &\quad\leq C\theta\left(\sum_{m=1}^N\psi_{m, 0}\right)\left(\sum_{m=0}^{N-1}\phi_{0, m}\right) 
       + C\theta\left(\sum_{m=1}^N\sum_{k=0}^{m-1}\frac{\phi_{k, 0}}{[m-k]_2^2}\right)
     \left(\sum_{m=1}^N\sum_{k=1}^m \psi_{k, m-k}\right)\\
 &\quad \leq C\theta\Psi_{N-1}\Phi_{N-1} 
  + C\theta\left(\sum_{m=0}^{N-1}\phi_{m, 0}\right)\left(\sum_{m=1}^{N}\frac{1}{m^4}\right)\Psi_{N-1}\\
 &\quad\leq C\theta\Psi_{N-1}\Phi_{N-1},
\end{align*}
 for some appropriately large $C = C(n)$, which we may take for our $C_5$. Since $\Phi_{N-1} \leq \Phi_N$, and 
$\Psi_{N-1} \leq \Psi_N$,
this completes the proof.
\end{proof}

\subsection{The commutator term $\mc{W}_{k, l}$}
Now we consider the most complicated component of $\mc{Q}_{k, l}$. We will first need a formula
for the commutator of an $l$-fold time-derivative with the tensor Laplacian associated to $g(t)$.
\begin{lemma}\label{lem:dtldeltacomm}
If $T$ is a smooth family of tensors of rank $\rho$, then, there exists
a constant $C= C(\rho, n)$ such that, for any $l\in \NN_0$, we have
\begin{align*}\begin{split}
    \left[\dt{l}, \Delta\right]T &\lt 
      C\bigg\{\sum_{\stackrel{|\gamma| = l}{\gamma_2 \geq 1}}
	\binom{l}{\gamma}\left(h^2\right)^{(0, \gamma_1)}\ast \R{1}{\gamma_2-1} \ast \T{1}{\gamma_3}\\
  &\phantom{\lt}+\sum_{\stackrel{|\gamma|=l}{\gamma_2, \gamma_3 \geq 1}}
	  \binom{l}{\gamma} \left(h^3\right)^{(0, \gamma_1)}\ast \R{1}{\gamma_2-1} \ast\R{1}{\gamma_3-1}
		\ast \T{0}{\gamma_4} \\
  &\phantom{\lt}+ \sum_{\stackrel{|\gamma| = l}{\gamma_2\geq 1}}\binom{l}{\gamma}\left(h^2\right)^{(0, \gamma_1)}
	  \ast \R{2}{\gamma_2-1}\ast \T{0}{\gamma_3}\\
  &\phantom{\lt}+ \sum_{|\gamma|=l-1}\binom{l}{\gamma}\h{0}{\gamma_1+1}\ast \T{2}{\gamma_2}\bigg\},
\end{split}
\end{align*}
where $h^2$ and $h^3$ denote factors of the form $h\ast h$ and $h\ast h\ast h$, respectively.
\end{lemma}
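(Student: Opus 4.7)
My plan is to argue by induction on $l$, starting from a direct computation of the base case $l=1$ and then propagating the structural form of the commutator using the identity
\[
    [\dt{l+1},\Delta]T = \pdt\bigl([\dt{l},\Delta]T\bigr) + [\pdt,\Delta]\dt{l}T,
\]
which follows at once from $\dt{l+1}\Delta=\pdt(\Delta\dt{l}+[\dt{l},\Delta])$ and $\Delta\dt{l+1}=(\pdt\Delta-[\pdt,\Delta])\dt{l}$.

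For the base case, since $\Delta T = h^{ij}\nabla_i\nabla_j T$, I first compute
\[
    [\pdt,\Delta]T = (\pdt h)\ast\nabla^{(2)}T + h\ast[\pdt,\nabla^{(2)}]T.
\]
Two elementary inputs from the Ricci flow then take care of the rest. Differentiating $h^{ik}g_{kj}=\delta^i_j$ gives $\pdt h\lt h\ast h\ast R$; and differentiating the Christoffel symbols yields $\pdt\Gamma\lt h\ast\nabla R$, hence $[\pdt,\nabla]S\lt h\ast\nabla R\ast S$ for any tensor $S$. Expanding $[\pdt,\nabla^{(2)}]T$ via these two observations produces exactly the three terms of types (1), (3), and (4) at $l=1$ (type (2) does not appear since $\gamma_2,\gamma_3\geq 1$ is impossible when $|\gamma|=1$).

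For the inductive step, I assume the formula holds at level $l$ and apply $\pdt$ term-by-term to each of the four sums. When $\pdt$ falls on a factor of the form $(h^r)^{(0,\alpha)}$ (with $r=1,2,3$), it simply increases $\alpha$ by one, remaining within the same type. When $\pdt$ falls on a factor $\R{1}{\gamma_i-1}$ or $\R{2}{\gamma_i-1}$, I write $\pdt\nabla^{(k)}\R{0}{\gamma_i-1} = \nabla^{(k)}\R{0}{\gamma_i} + [\pdt,\nabla^{(k)}]\R{0}{\gamma_i-1}$: the first piece advances $\gamma_i$ and stays within the same type, while the commutator, by the $l=1$ analysis applied to $\nabla$, produces a new factor of the form $h\ast\nabla R$ — precisely the mechanism that converts type (3) into type (2) and explains the appearance of the $h^3$ factor. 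When $\pdt$ falls on $\T{a}{\gamma_i}$ with $a\geq 1$, I similarly separate a $\T{a}{\gamma_i+1}$ piece (which stays in type) from a commutator piece $h\ast\nabla R\ast \T{a-1}{\gamma_i}$ (which passes type (1) into type (3), or type (4) into type (3)). The second summand $[\pdt,\Delta]\dt{l}T$ in the identity above is then handled by the $l=1$ formula applied to $\T{0}{l}$, contributing three more terms that fit cleanly into types (1), (3), and (4).

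The main bookkeeping obstacle — and the one I expect to be the most delicate — is verifying that the combinatorial weights $\binom{l}{\gamma}$ transform correctly into $\binom{l+1}{\gamma'}$ under these operations. This is a standard Pascal-type identity: each time $\pdt$ increments one component $\gamma_i$ by one, summing the resulting contributions over the choice of which component absorbed the derivative reproduces $\binom{l+1}{\gamma'}$. A subsidiary nuisance is that the $(h^r)^{(0,\alpha)}$ factors are themselves Leibniz-expanded products, so applying $\pdt$ redistributes the time-derivatives among the $r$ copies of $h$; because the $\lt$ notation absorbs the resulting multinomial reorganization, and because a factor $h\ast (h^2)^{(0,\alpha)}$ can always be rewritten as (part of) $(h^3)^{(0,\alpha)}$ after summing over the redistribution, the four types are genuinely closed under $\pdt$. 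Once this bookkeeping is carried out and all binomial coefficients are bounded by $\binom{l+1}{\gamma'}$ times an $n$- and $\rho$-dependent constant, the induction closes and the lemma follows.
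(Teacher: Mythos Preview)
Your inductive scheme has a structural gap: the four ``types'' are \emph{not} closed under the operation you describe. Consider what happens in type~(2) or type~(3) when $\pdt$ lands on a factor $\R{1}{\gamma_i-1}$ or $\R{2}{\gamma_i-1}$. The commutator $[\pdt,\nabla^{(k)}]\R{0}{\gamma_i-1}$ introduces a new factor $h\ast\R{1}{0}$ \emph{multiplied against} the existing $\R{0}{\gamma_i-1}$ (or, for $k=2$, also a term $h\ast\R{2}{0}\ast\R{0}{\gamma_i-1}$). Combined with the other factors already present, you obtain products with \emph{three} curvature factors, e.g.
\[
   (h^3)^{(0,\gamma_1)}\ast h\ast \R{1}{0}\ast \R{0}{\gamma_2-1}\ast \R{1}{\gamma_3-1}\ast \T{0}{\gamma_4},
\]
and these do not fit any of the four templates (which carry at most two curvature factors). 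Your proposed absorptions --- rewriting $h\ast(h^2)^{(0,\alpha)}$ as part of $(h^3)^{(0,\alpha)}$ --- work for the metric-inverse factors, but there is no analogous mechanism to fold an extra $\R{0}{m}$ back into an $(h^r)^{(0,\cdot)}$ factor within the $\lt$ formalism, since $(h^r)^{(0,\cdot)}$ is a \emph{specific} tensor and a generic contraction of $h\otimes R$ is not one of its contractions. So the induction does not close on these four types; at best it would close on a larger family (all products $h^{r}\ast \R{1}{\cdot}^{\ast p}\ast \R{2}{\cdot}^{\ast q}\ast \R{0}{\cdot}^{\ast s}\ast T^{(\cdot,\cdot)}$), which is not the statement.

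The paper sidesteps this entirely with a different device: it fixes $t_0$, freezes the connection $\delb \dfn \nabla_{g(t_0)}$, and writes $\Delta T$ in terms of the time-independent operator $\overline{\Box}=h^{ij}\delb_i\delb_j$ plus correction terms built from $\delb g$. Since $\delb$ is independent of $t$, applying $\dt{l}$ is a pure Leibniz expansion with \emph{no} commutator corrections, producing exactly four kinds of terms. Evaluating at $t=t_0$ then uses $\delb g|_{t_0}=0$ (which forces the constraints $\gamma_i\geq 1$) and $\delb^{(p)}\dt{q}g|_{t_0}=-2\R{p}{q-1}$ to convert the $\delb g$-factors into curvature factors, landing precisely on the four types with a constant $C(\rho,n)$ fixed once and for all.
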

\begin{proof} To compute this commutator, we fix $t_0 \in [0, \Omega]$
and ``freeze'' the connection at time $t_0$, using $\overline{\nabla}$ to 
denote the Levi-Civita connection of the metric $\bar{g} \dfn g(t_0)$.  Then we obtain a tensorial
expression for the difference of the second covariant derivatives applied to our tensor $T$.
Namely, we have
\[
  \Gamma_{ij}^k -\overline{\Gamma}_{ij}^k 
= \frac{1}{2}h^{mk}\left(\overline{\nabla}_i g_{jm} + \overline{\nabla}_j g_{im} - \overline{\nabla}_m g_{ij}\right),
\]
so
\[
 \nabla - \delb \lt 3h\ast\delb g, \quad \nabla T \lt \overline{\nabla} T + 3\rho h\ast \overline{\nabla}g\ast T,  
\]
and, consequently,
\begin{align*}
\begin{split}
 \nabla\nabla T &\lt \delb \nabla T + 3(\rho +1)h\ast\delb g\ast \nabla T\\
  &\lt \delb\left(\delb T + 3\rho h\ast\delb g\ast T\right) +3(\rho+1)h\ast \delb g \ast \left(\delb T + 3\rho h\ast\delb g\ast T\right)\\
&\lt \overline{\nabla}\overline{\nabla}T 
  + 3\rho(3\rho+4)h \ast h\ast \overline{\nabla}g \ast \overline{\nabla} g \ast T+3\rho h\ast \overline{\nabla}\overline{\nabla}g \ast T \\
  &\phantom{\lt}
  + 3(2\rho + 1) h\ast \overline{\nabla}g \ast \overline{\nabla}T,
\end{split}
\end{align*}
as $\delb h \lt h\ast h\ast \delb g$.
Defining the operator $\overline{\Box}\dfn h^{ij}\delb_i\delb_j$,
we therefore have
\begin{align*}
 \begin{split}
 \Delta T &\lt \overline{\Box}T 
  + 3\rho(3\rho+4) h^3\ast \overline{\nabla}g \ast \overline{\nabla} g \ast T\\
  &\qquad\phantom{\lt}+3\rho h^2\ast \overline{\nabla}\overline{\nabla}g \ast T 
  + 3(2\rho + 1) h^2\ast \overline{\nabla}g \ast \overline{\nabla}T.
\end{split}
\end{align*}
Since the connection $\overline{\nabla}$ is independent of time, 
differentiating the expression on the right-hand side of the above equation is merely a matter
of applying the Leibniz rule, and doing so yields
\begin{align*}
\begin{split}
\dt{l}\Delta T &\lt \overline{\Box}\T{0}{l} + \sum_{|\gamma| = l-1}\binom{l}{\gamma}\h{0}{\gamma_1+1}\ast\delb\delb\T{0}{\gamma_2}\\
 &\phantom{\lt}
  + 3(3\rho+1)(\rho+1)\sum_{|\gamma|=l}\binom{l}{\gamma} \left(h^3\right)^{(0, \gamma_1)}\ast \overline{\nabla}\g{0}{\gamma_2} 
  \ast \overline{\nabla}\g{0}{\gamma_3} \ast \T{0}{\gamma_4}\\
  &\phantom{\lt}+3\rho \sum_{|\gamma|=l}\binom{l}{\gamma}\left(h^2\right)^{(0, \gamma_1)}\ast 
	  \overline{\nabla}\overline{\nabla}\g{0}{\gamma_2} \ast \T{0}{\gamma_3}\\
  &\phantom{\lt}
  + 3(2\rho + 1) \sum_{|\gamma|=l}\binom{l}{\gamma} \left(h^2\right)^{(0, \gamma_1)}\ast \overline{\nabla}\g{0}{\gamma_2} \ast \overline{\nabla}\T{0}{\gamma_3}.
\end{split}
\end{align*}

Now we evaluate this expression at $t= t_0$.  We have $\nabla = \delb$, $\Delta =\overline{\Box}$, $\delb g = 0$, and
also that, for any $p\in \NN_0$ and $q \in\NN$, 
\[
\delb^{(p)}\g{0}{q} = \ -2\nabla^{(p)}\dt{q-1}\Rc \lt 2n \R{p}{q-1}.
\]
So we have
\begin{align*}
 \begin{split}
\left[\dt{l}, \Delta\right] T&\lt C(\rho, n)\bigg\{\sum_{\stackrel{|\gamma|=l}{\gamma_2 > 0}}\binom{l}{\gamma} \left(h^2\right)^{(0, \gamma_1)}\ast \R{1}{\gamma_2-1} \ast \T{1}{\gamma_3}\\
 &\phantom{\lt C(\rho, n)\bigg\{}
  + \sum_{\stackrel{|\gamma|=l}{\gamma_2, \gamma_3 > 0}}\binom{l}{\gamma} \left(h^3\right)^{(0, \gamma_1)}\ast \R{1}{\gamma_2-1} 
  \ast \R{1}{\gamma_3-1} \ast \T{0}{\gamma_4}\\
  &\phantom{\lt C(\rho, n)\bigg\{}+\sum_{\stackrel{|\gamma|=l}{\gamma_2 > 0}}\binom{l}{\gamma}\left(h^2\right)^{(0, \gamma_1)}\ast 
	  \R{2}{\gamma_2-1} \ast \T{0}{\gamma_3}\\
  &\phantom{\lt C(\rho, n)\bigg\{}
  +\sum_{|\gamma| = l-1}\binom{l}{\gamma}\h{0}{\gamma_1+1}\ast\T{2}{\gamma_2}
\bigg\},
\end{split}
\end{align*}
which is the desired formula.
\end{proof}

Specializing to the case $T = R$, and applying a $k$-fold covariant differentiation to the result of Lemma \ref{lem:dtldeltacomm},
we obtain the following expression for   $W_{k, l} = \nabla^{(k)}[\dt{l}, \Delta]R$.
\begin{lemma}\label{lem:wkl} There exists a constant $C_6 = C_6(n)$ such that, for any
$k$, $l\in \NN_0$, we have $W_{k, l} \lt C_6(W_{k, l}^1 + W_{k, l}^2 + W_{k, l}^3 + W_{k, l}^4)$ 
where
\begin{align}\label{eq:wkl1}
W_{k, l}^1 &\dfn \sum_{\stackrel{|\beta| =k, |\gamma| = l}{\gamma_2 \geq 1}}\binom{k}{\beta}\binom{l}{\gamma}\left(h^2\right)^{(\beta_1, \gamma_1)}\ast \R{\beta_2 + 1}{\gamma_2-1} \ast 
      \R{\beta_3 + 1}{\gamma_3},\\
\label{eq:wkl2}
W_{k, l}^2  &\dfn\sum_{\stackrel{|\beta|= k, |\gamma|=l}{\gamma_2, \gamma_3 \geq 1}}
	  \binom{k}{\beta}\binom{l}{\gamma} \left(h^3\right)^{(\beta_1, \gamma_1)}\ast \R{\beta_2+1}{\gamma_2-1} 
      \ast\R{\beta_3+ 1}{\gamma_3-1}
		\ast \R{\beta_4}{\gamma_4}, \\
\label{eq:wkl3}
 W_{k, l}^3 &\dfn \sum_{\stackrel{|\beta|= k, |\gamma| = l}{\gamma_2\geq 1}}\binom{k}{\beta}\binom{l}{\gamma}
      \left(h^2\right)^{(\beta_1, \gamma_1)}
	  \ast \R{\beta_2+2}{\gamma_2-1}\ast \R{\beta_3}{\gamma_3}, \quad\mbox{and}\\
\label{eq:wkl4}
W_{k, l}^4  &\dfn \sum_{|\beta|= k, |\gamma|=l-1}
	    \binom{k}{\beta}\binom{l}{\gamma}\h{\beta_1}{\gamma_1+1}\ast \R{\beta_2+2}{\gamma_2}.
\end{align}
\end{lemma}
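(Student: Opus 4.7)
The plan is to deduce the lemma by directly applying $\nabla^{(k)}$ to the formula for $[\partial_t^l, \Delta]R$ provided by Lemma \ref{lem:dtldeltacomm} (specialized to $T = R$, so $\rho = 3$). The four summands in that lemma are contracted tensor products whose factors are of the form $(h^2)^{(0,\gamma_1)}$, $(h^3)^{(0,\gamma_1)}$, $h^{(0,\gamma_1+1)}$, $R^{(1,\gamma_i-1)}$, $R^{(2,\gamma_i-1)}$, and $R^{(0,\gamma_i)}$. The identity in Lemma \ref{lem:dtldeltacomm} was obtained by freezing the connection at an arbitrary $t_0$ and then evaluating at $t = t_0$, so in its final form it is a tensorial inequality in the $\preceq$ sense which holds at every time $t \in [0,\Omega]$; in particular it may be covariantly differentiated.

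First I would invoke the differentiation rule for $\preceq$ from the remark following \eqref{eq:ltdef}, which states that covariant differentiation distributes across $\preceq$-expressions by the Leibniz rule. Iterating this rule $k$ times on a contracted product of $p$ factors $F_1 \ast F_2 \ast \cdots \ast F_p$ yields
\[
\nabla^{(k)}(F_1 \ast F_2 \ast \cdots \ast F_p) \preceq \sum_{|\beta|=k} \binom{k}{\beta}\, \nabla^{(\beta_1)}F_1 \ast \nabla^{(\beta_2)}F_2 \ast \cdots \ast \nabla^{(\beta_p)}F_p,
\]
where $\beta$ is a multi-index of length $p$. I would then apply this formula term-by-term to each of the four sums in Lemma \ref{lem:dtldeltacomm}.

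For each factor, the $\beta_i$ new covariant derivatives simply shift its first index: $(h^2)^{(0,\gamma_1)}$ becomes $(h^2)^{(\beta_1,\gamma_1)}$, $R^{(1,\gamma_2-1)}$ becomes $R^{(\beta_2+1,\gamma_2-1)}$, and so on. Gathering the resulting sums gives exactly $W_{k,l}^1, W_{k,l}^2, W_{k,l}^3, W_{k,l}^4$ as defined in \eqref{eq:wkl1}--\eqref{eq:wkl4}, with multinomial coefficient $\binom{k}{\beta}$ of length $3$, $4$, $3$, and $2$, respectively (matching the number of factors in each original summand). The constant $C_6$ then absorbs the constant $C(\rho,n) = C(3,n)$ from Lemma \ref{lem:dtldeltacomm}.

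The only real obstacle is bookkeeping: one must verify that the length of $\beta$ in each sum matches the number of factors in the corresponding term of Lemma \ref{lem:dtldeltacomm}, and that no constraint on $\gamma$ (such as $\gamma_2 \geq 1$ or $\gamma_2, \gamma_3 \geq 1$) is lost in the differentiation step. Since $\nabla^{(k)}$ commutes with the summation over $\gamma$ and does not affect the $\gamma$-constraints, this is a purely notational check, and the proof is complete.
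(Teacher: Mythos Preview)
Your proposal is correct and matches the paper's approach exactly: the paper simply states that the lemma follows by specializing Lemma~\ref{lem:dtldeltacomm} to $T=R$ and applying $\nabla^{(k)}$ via the Leibniz rule, which is precisely what you spell out. One inconsequential slip: the $(3,1)$-curvature tensor $R$ has rank $\rho = 4$, not $3$, but since $C(\rho,n)$ is absorbed into $C_6(n)$ this has no effect on the argument.
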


We now proceed to estimate $\mc{W}^i_{k, l} \dfn a_{k, l}^2\theta^{k+2l}|W_{k, l}^i|^2$ for $i =1, 2, 3, 4$;
in what follows, we may assume $l \geq 1$ as $W_{k, 0} \equiv 0$.  We will use the temporary shorthand
$\chi_{k, l}^{(i)} \dfn a_{k, l}^2\theta^{k+2l}|(h^i)^{(k, l)}|^2$, for $i = 2$, $3$.

\subsubsection{Estimate of $\mc{W}_{k, l}^1$}
 We begin with equation \eqref{eq:wkl1},
and argue as in Lemma \ref{lem:prodsum}.  First, we have
\begin{align*}
\begin{split}
\sqrt{\mc{W}^1_{k, l}} &\leq
\sum_{\stackrel{|\beta| =k, |\gamma| = l}{\gamma_2 \geq 1}}\frac{[k]_2 [l]_2[\beta_2-1][\beta_3-1]}{[\beta]_2[\gamma]_2[\gamma_2-2]}
	\sqrt{\chi^{(2)}_{\beta_1, \gamma_1} \phi_{\beta_2 + 1, \gamma_2-1}
	  \phi_{\beta_3 + 1, \gamma_3}}.
\end{split}
\end{align*}
Since $[\beta_2 - 1] \leq [k-1]$, and $[\beta_3- 1]\phi^{1/2}_{\beta_3+1, \gamma_3} = \theta^{1/2}\psi^{1/2}_{\beta_3+1, \gamma_3}$, this becomes
\begin{align*}
\begin{split}
\sqrt{\mc{W}^1_{k, l}}
&\leq \theta^{1/2}[k-1]\sum_{\stackrel{|\beta| =k, |\gamma| = l}{\gamma_2 \geq 1}}\frac{[k]_2 [l]_2}{[\beta]_2[\gamma]_2}
	\sqrt{\chi^{(2)}_{\beta_1, \gamma_1} \phi_{\beta_2 + 1, \gamma_2-1}
	  \psi_{\beta_3 + 1, \gamma_3}}.
\end{split}
\end{align*}
So, by Lemma \ref{lem:comb1}, we have
\begin{align*}
\mc{W}^1_{k, l}
	      &\leq C\theta[k-1]^2\left(\sum_{\stackrel{|\beta| =k, |\gamma| = l}{\gamma_2 \geq 1}}\frac{[k]_4 [l]_4}{[\beta]_4[\gamma]_4}\right)
	\left(\sum_{\stackrel{|\beta| =k, |\gamma| = l}{\gamma_2 \geq 1}}
	\chi^{(2)}_{\beta_1, \gamma_1} \phi_{\beta_2 + 1, \gamma_2-1}\psi_{\beta_3 + 1, \gamma_3}\right)\\
 &\leq C_7 \theta [k-1]^2\sum_{\stackrel{|\beta| =k, |\gamma| = l-1}{\gamma_2 \geq 1}}
	\chi^{(2)}_{\beta_1, \gamma_1} \phi_{\beta_2 + 1, \gamma_2}
	  \psi_{\beta_3 + 1, \gamma_3},
\end{align*}
for some universal constant $C_7$.  Consequently, we obtain
\begin{align}\nonumber
 \sum_{1\leq k + l \leq N}\frac{\mc{W}^1_{k, l}}{[k-1]^2}
	&\leq C_7 \theta \sum_{\stackrel{1\leq k + l \leq N}{|\beta| =k, |\gamma| = l-1}}
	\chi^{(2)}_{\beta_1, \gamma_1} \phi_{\beta_2 + 1, \gamma_2}
	  \psi_{\beta_3 + 1, \gamma_3}\\
\nonumber
      &\leq C_7\theta \left(\sum_{m=0}^{N-1}\sum_{k=0}^{m}\chi^{(2)}_{k, m-k}\right)
		   \left(\sum_{m=1}^{N}\sum_{k=1}^{m}\phi_{k, m-k}\right)
		   \left(\sum_{m=1}^{N}\sum_{k=1}^{m}\psi_{k, m-k}\right)\\
\label{eq:mcwkl1}
      &\leq C_7\theta\Phi_N\Psi_{N-1}\left(\sum_{m=0}^N\sum_{k=0}^{m}\chi^{(2)}_{k, m-k}\right).
\end{align}

\subsubsection{Estimate of $\mc{W}_{k, l}^2$}
Next, starting from  \eqref{eq:wkl2} and arguing as for the previous term, we find initially that
\begin{align*}
\begin{split}
\sqrt{\mc{W}_{k, l}^2}  &\leq\sum_{\stackrel{|\beta|= k, |\gamma|=l}{\gamma_2, \gamma_3 \geq 1}}
	  \frac{\theta[k]_2[l]_2[\beta_2-1][\beta_3-1]}{[\beta]_2[\gamma]_2[\gamma_2-2][\gamma_3-2]}\sqrt{\chi^{(3)}_{\beta_1, \gamma_1}\phi_{\beta_2+1, \gamma_2-1} 
      \phi_{\beta_3+ 1, \gamma_3-1}\phi_{\beta_4, \gamma_4}}
\end{split}\\
&\leq [k-1]\theta^{3/2}\sum_{\stackrel{|\beta|= k, |\gamma|=l}{\gamma_2, \gamma_3 \geq 1}}
	  \frac{[k]_2[l]_2}{[\beta]_2[\gamma]_2}\sqrt{\chi^{(3)}_{\beta_1, \gamma_1}\phi_{\beta_2+1, \gamma_2-1} 
      \psi_{\beta_3+ 1, \gamma_3-1}\phi_{\beta_4, \gamma_4}}.
\end{align*}
Then, using Lemma \ref{lem:comb1} again, we have
\[
 \sum_{\stackrel{|\beta|= k, |\gamma|=l}{\gamma_2, \gamma_3 \geq 1}}
	  \left(\frac{[k]_2[l]_2}{[\beta]_2[\gamma]_2}\right)^2 
  \leq C\sum_{|\beta|= k, |\gamma|=l}\frac{[k]_4[l]_4}{[\beta]_4[\gamma]_4} \leq C_8,
\]
for some universal $C_8$, and so
\begin{align*}
 \mc{W}_{k, l}^2 &\leq C_8[k-1]^2\theta^3\sum_{|\beta|= k, |\gamma|=l-2}
	 \chi^{(3)}_{\beta_1, \gamma_1}\phi_{\beta_2+1, \gamma_2} 
      \psi_{\beta_3+ 1, \gamma_3}\phi_{\beta_4, \gamma_4}.
\end{align*}
 Summing this inequality over $k$ and $l$ then yields
\begin{align}\nonumber
 \sum_{1\leq k + l \leq N}\frac{\mc{W}^2_{k, l}}{[k-1]^2} 
    &\leq \sum_{\stackrel{1\leq k + l \leq N}{|\beta|= k, |\gamma|=l-2}}
	 C_8\theta^3 \chi^{(3)}_{\beta_1, \gamma_1}\phi_{\beta_2+1, \gamma_2} 
      \psi_{\beta_3+ 1, \gamma_3}\phi_{\beta_4, \gamma_4}\\
\begin{split}\nonumber  
  &\leq C_8\theta^3 \left(\sum_{m=0}^{N-2}\sum_{k=0}^{m}\chi^{(3)}_{k, m-k}\right)
		   \left(\sum_{m=1}^{N-1}\sum_{k=1}^{m}\phi_{k, m-k}\right)
		   \left(\sum_{m=1}^{N-1}\sum_{k=1}^{m}\psi_{k, m-k}\right)\\
  &\phantom{\leq}\quad\qquad\times\left(\sum_{m=0}^{N-2}\sum_{k=0}^{m}\phi_{k, m-k}\right)
\end{split}\\
\label{eq:mcwkl2}
  &\leq C_8\theta^3 \Phi_{N-1}\Phi_{N-2}\Psi_{N-2}\left(\sum_{m=0}^{N-2}\sum_{k=0}^{m}\chi^{(3)}_{k, m-k}\right).
\end{align}
\subsubsection{Estimate of $\mc{W}_{k, l}^3$}
We proceed as for the previous two terms.  Equation \eqref{eq:wkl3} implies 
\begin{align*}
\begin{split}
  \sqrt{\mc{W}_{k, l}^3} &\leq \sum_{\stackrel{|\beta|= k, |\gamma| = l}{\gamma_2\geq 1}}
      \frac{[k]_2[l]_2[\beta_2]_2}{[\beta]_2[\gamma]_2[\gamma_2-1]}
      \sqrt{\chi^{(2)}_{\beta_1, \gamma_1}\phi_{\beta_2+2, \gamma_2-1}\phi_{\beta_3, \gamma_3}}
\end{split}\\
  &\leq \sqrt{\theta}\sum_{\stackrel{|\beta|= k, |\gamma| = l}{\gamma_2\geq 1}}
      \frac{[k]_2[l]_2}{[\beta]_2[\gamma]_2[\beta}
      \sqrt{\chi^{(2)}_{\beta_1, \gamma_1}\psi_{\beta_2+2, \gamma_2-1}\phi_{\beta_3, \gamma_3}}\\
  &\leq [k-1]\sqrt{\theta}\sum_{\stackrel{|\beta|= k, |\gamma| = l}{\gamma_2\geq 1}}
      \frac{[k]_2[l]_2}{[\beta]_2[\gamma]_2}
      \sqrt{\chi^{(2)}_{\beta_1, \gamma_1}\psi_{\beta_2+2, \gamma_2-1}\phi_{\beta_3, \gamma_3}},
\end{align*}
so, as above, we obtain that
\begin{align*}
 \mc{W}_{k, l}^3 &\leq C_9[k-1]^2\theta  \sum_{|\beta|= k, |\gamma| = l-1}
      \chi^{(2)}_{\beta_1, \gamma_1}\psi_{\beta_2+2, \gamma_2}\phi_{\beta_3, \gamma_3},
\end{align*}
for some universal constant $C_9$.  Summing, we then find that
\begin{align}\nonumber
 \sum_{1\leq k + l \leq N}\frac{\mc{W}^3_{k, l}}{[k-1]^2}
  &\leq C_9\theta\sum_{\stackrel{1\leq k + l \leq N}{|\beta|= k, |\gamma| = l-1}}
      \chi^{(2)}_{\beta_1, \gamma_1}\psi_{\beta_2+2, \gamma_2}\phi_{\beta_3, \gamma_3}\\
\nonumber
  &\leq   C_9\theta\left(\sum_{m=0}^{N-1}\sum_{k=0}^m  \chi^{(2)}_{k, m-k}\right)
	  \left(\sum_{m=2}^{N+1}\sum_{k=2}^m \psi_{k, m-k}\right)
	  \left(\sum_{m=0}^{N-1}\sum_{k=0}^m\phi_{k, m-k}\right)\\
\label{eq:mcwkl3}
  &\leq C_9\theta\Phi_{N-1}\Psi_{N}\left(\sum_{m=0}^{N-1}\sum_{k=0}^{m}\chi^{(2)}_{k, m-k}\right).
\end{align}

\subsubsection{Estimate of $\mc{W}_{k, l}^4$} This term is potentially the most troublesome of the four,
but it can be handled in the manner of the previous three with a minor adjustment. Starting from \eqref{eq:wkl4}, 
we have, for $0< t \leq \Omega$,
\begin{align*}
\begin{split}
  \sqrt{\mc{W}_{k, l}^4} &\leq  \theta^{-1}\sum_{\stackrel{|\beta|= k, |\gamma| = l}{\gamma_1\geq 1}}
      \frac{[k]_2[l]_2[\beta_2]_2}{[\beta]_2[\gamma]_2}
      \sqrt{\chi_{\beta_1, \gamma_1}\phi_{\beta_2+2, \gamma_2}}
\end{split}\\
  &\leq \theta^{-1/2}\sum_{\stackrel{|\beta|= k, |\gamma| = l}{\gamma_1\geq 1}}
      \frac{[k]_2[l]_2[\beta_2 - 1]}{[\beta]_2[\gamma]_2}
      \sqrt{\chi_{\beta_1, \gamma_1}\psi_{\beta_2+2, \gamma_2}}\\
 &\leq [k-1]\theta^{-1/2}\sum_{\stackrel{|\beta|= k, |\gamma| = l}{\gamma_1\geq 1}}
      \frac{[k]_2[l]_2}{[\beta]_2[\gamma]_2}
      \sqrt{\chi_{\beta_1, \gamma_1}\psi_{\beta_2+2, \gamma_2}},
\end{align*}
so, by Lemma \ref{lem:comb1} and Cauchy-Schwarz,
\begin{align*}
  \mc{W}^4_{k, l} \leq \frac{C_{10}[k-1]^2}{\theta}\sum_{|\beta|= k, |\gamma| = l-1}
    \chi_{\beta_1, \gamma_1+1}\psi_{\beta_2+2, \gamma_2}
\end{align*}
for some universal constant $C_{10}$.  Summing, we then obtain that 
\begin{align}\nonumber
 \sum_{1\leq k + l \leq N}\frac{\mc{W}^4_{k, l}}{[k-1]^2} &\leq \frac{C_{10}}{\theta} 
    \left(\sum_{m=0}^{N}\sum_{k=0}^{m-1}\chi_{k, m-k}\right)\left(\sum_{m=1}^{N+1}\sum_{k=2}^m\psi_{k, m-k}\right)\\
\label{eq:mcwkl4}
&\leq C_{10}\theta^{-1}(X_N - n) \Psi_N,
\end{align}
where we have used that
\[
    X_N = \sum_{m=0}^{N}\sum_{k=0}^{m}\chi_{k, m-k} = \chi_{0, 0} + \sum_{m=1}^{N}\sum_{k=0}^{m-1}\chi_{k, m-k} = n + \sum_{m=1}^{N}\sum_{k=0}^{m-1}\chi_{k, m-k}.
\]
These latter equalities are valid since, as noted before, $\chi_{0, 0} \equiv n$, and $\chi_{k, 0} \equiv 0$ if $k > 0$.

\subsubsection{Combined estimate for $\mc{W}_{k, l}$}

Now we bring estimates \eqref{eq:mcwkl1}, \eqref{eq:mcwkl2}, \eqref{eq:mcwkl3}, and \eqref{eq:mcwkl4} together.
\begin{proposition}\label{prop:wklest}
  For any $\Omega$ and $M_0 > 0$, there exist constants $C_{11} = C_{11}(n)$ and $L_6 = L_6(n, M_0, \Omega^{*})$
such that if $g(t)$ satisfies the assumptions of Theorem \ref{thm:timeanalyticity}, then, for any $L \geq L_6$
and $N\in \mathbb{N}$, if $\Phi_{N-1} = \Phi_{N-1; L}$ satisfies
\[
  \sup_{M\times[0, \Omega]}\Phi_{N-1} \leq 2M_0^2,
\]
the quantity $\mc{W}_{k, l}$ formed from $\Phi_{N}$ as in \eqref{eq:mcuvwdef} with $\theta = \theta_L$ 
satisfies
\begin{equation}\label{eq:wklest}
 \sum_{1\leq k+l\leq N} \frac{\theta\mc{W}_{k, l}}{[k-1]^2} \leq \left(C_{11}\theta(\Phi^2 + 1) + \frac{1}{2C_2}\right)\Psi_N 
\end{equation}
for any $N\in \mathbb{N}$ on $M\times [0, \Omega]$, where $C_2 = C_2(n)$ is the constant from Proposition 
\ref{prop:prelimphibound}.
\end{proposition}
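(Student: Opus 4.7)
The plan is to assemble the four partial bounds \eqref{eq:mcwkl1}--\eqref{eq:mcwkl4} already in hand and show that each resulting contribution either carries a strictly positive power of $\theta$ (so it can be absorbed into $C_{11}\theta(\Phi_N^2+1)\Psi_N$) or takes the form $c(X_N - n)\Psi_N$ (and can be absorbed into $\Psi_N/(2C_2)$ once $X_N - n$ is made small). The starting point is the bound $\mc{W}_{k, l}\leq 4C_6^2\sum_{i=1}^4 \mc{W}^i_{k, l}$ coming from Lemma \ref{lem:wkl} and Cauchy--Schwarz, which reduces matters to estimating each of the four sums separately.

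To dispense with the $\chi^{(i)}_{k, l}$ factors appearing in \eqref{eq:mcwkl1}--\eqref{eq:mcwkl3}, I would apply the remark following Lemma \ref{lem:prodsum} to the $i$-fold self-contraction $h^i$ to obtain
\[
\sum_{0\leq k+l\leq N}\chi^{(i)}_{k, l}\leq C(n)^i X_N^i, \qquad i = 2,\,3.
\]
The hypothesis $\Phi_{N-1}\leq 2M_0^2$ together with Lemma \ref{lem:invmetbounds} (applied with $2M_0^2$ in place of $M$ and $N-1$ in place of $N$) then permits the enlargement of $L$ beyond a threshold depending only on $n$, $M_0$, $\Omega^*$, and an as-yet-undetermined small parameter $\delta > 0$, to guarantee $X_N\leq (1+\delta)n\leq 2n$; in particular, the sums above are controlled by universal constants depending only on $n$.

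Substituting these uniform bounds into \eqref{eq:mcwkl1}--\eqref{eq:mcwkl3}, multiplying through by $\theta$, and using the monotonicity $\Phi_{j}\leq \Phi_N$ along with $\Phi_N\leq \Phi_N^2 + 1$, each of the three contributions $\sum \theta\mc{W}^i_{k, l}/[k-1]^2$ for $i=1, 2, 3$ is bounded by $C(n)\theta^{p_i}(\Phi_N^2+1)\Psi_N$ with $p_i\geq 2$. Since $\theta = t/L\leq \Omega^*/L\leq 1$ for $L$ sufficiently large, these are collectively dominated by $C(n)\theta(\Phi_N^2 + 1)\Psi_N$.

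The main obstacle is $\mc{W}^4_{k, l}$: its estimate \eqref{eq:mcwkl4} uniquely carries a factor of $\theta^{-1}$, so after multiplication by $\theta$ it yields merely $C_{10}(X_N - n)\Psi_N$, with no $\theta$-power to spare. Here the factorial lag built into the coefficients $a_{k, l}$ earns its keep: Lemma \ref{lem:invmetbounds} permits $\delta$ to be made arbitrarily small at the cost only of enlarging $L$, so choosing $\delta$ so that $C_{10}\delta n\leq 1/(2C_2)$ and then taking $L_6$ to be the maximum of the corresponding threshold in Lemma \ref{lem:invmetbounds} and all earlier lower bounds on $L$ delivers precisely the $\Psi_N/(2C_2)$ term on the right-hand side of \eqref{eq:wklest}. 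Summing the four contributions gives the claimed inequality with $C_{11} = C_{11}(n)$.
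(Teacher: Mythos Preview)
Your proposal is correct and follows essentially the same route as the paper: invoke Lemma~\ref{lem:invmetbounds} to make $X_N$ close to $n$, use the remark after Lemma~\ref{lem:prodsum} to bound the $\chi^{(i)}$ sums by dimensional constants, absorb the $\mc{W}^1$, $\mc{W}^2$, $\mc{W}^3$ contributions (each carrying $\theta^2$ or higher) into $C_{11}\theta(\Phi_N^2+1)\Psi_N$, and handle the $\theta$-free $\mc{W}^4$ contribution by tuning $\delta$. One minor bookkeeping slip: since you begin with $\mc{W}_{k,l}\leq 4C_6^2\sum_i\mc{W}^i_{k,l}$, the $\mc{W}^4$ contribution is actually $4C_6^2C_{10}(X_N-n)\Psi_N$, so your threshold for $\delta$ should read $4C_6^2C_{10}\delta n\leq 1/(2C_2)$ rather than $C_{10}\delta n\leq 1/(2C_2)$; this does not affect the argument, only the numerical value of $L_6$.
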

\begin{proof}
Let the constant $C_6 = C_6(10)$ be as in Lemma \ref{lem:wkl} and $C_{10}= C_{10}(n)$ be as in \eqref{eq:mcwkl4}.
We first apply Lemma \ref{lem:invmetbounds} to obtain $L_5 = L_5(n, 2M_0^2, \Omega^*)$ 
sufficiently large to ensure that
$X_{N} = X_{N; L}$ satisfies
\begin{equation}\label{eq:xnbound}
    X_N\leq n\left(1 + \frac{1}{2nC_2C_{6}C_{10}}\right) 
\end{equation}
on $M\times [0, \Omega]$,
and take our $L_6 \geq \max\{L_5, \Omega^*\}$, henceforth assuming $L \geq L_6$. Note that, according to \eqref{eq:mcwkl4}, we then have
\begin{equation}\label{eq:wkl4new}
 \sum_{1\leq k + l \leq N}\frac{\theta\mc{W}^4_{k, l}}{[k-1]^2} \leq C_{10}(X_N - n) \Psi_N \leq \frac{1}{2C_2C_6}\Psi_N. 
\end{equation}
The bound \eqref{eq:xnbound}, together with Lemma \ref{lem:prodsum}, implies that the sums of $\chi^{(i)}_{k, l}$,
$i = 2, 3$, namely
\[
 \sum_{m=0}^{N}\sum_{k=0}^{m}\chi^{(i)}_{k, m-k},
\]
which appear as factors in equations  \eqref{eq:mcwkl1}, \eqref{eq:mcwkl2}, and \eqref{eq:mcwkl3},
are also bounded above by a constant, $C^{\prime}$, depending only on $n$.

Returning, then, to the decomposition of $\mc{W}_{k, l}$ defined in Lemma  \ref{lem:wkl},
and summing the inequalities \eqref{eq:mcwkl1}, \eqref{eq:mcwkl2}, \eqref{eq:mcwkl3}, and \eqref{eq:wkl4new},
we obtain
\begin{align*}
 &\sum_{0\leq k+l\leq N}  \frac{\theta\mc{W}_{k, l}}{[k-1]^2} \leq \sum_{0\leq k+l\leq N}\frac{C_6\theta}{[k-1]^2}
	  \left(\mc{W}^1_{k, l}+ \mc{W}^2_{k, l}+ \mc{W}^3_{k, l} + \mc{W}^4_{k, l}\right)\\
    &\qquad\leq C_6C^{\prime}\theta^2\left(C_7\Phi_N\Psi_{N-1} + C_8\theta^2 \Phi_{N-1}\Phi_{N-2}\Psi_{N-2} + 
    C_9\Phi_{N-1}\Psi_{N}\right) + \frac{1}{2C_2}\Psi_N.\\
    &\qquad\leq C_{11}\theta^2(1+ \Phi^2_N)\Psi_{N} + \frac{1}{2C_2}\Psi_N.
\end{align*}
provided $C_{11} = C_{11}(n)$ is taken sufficiently large.
Here we have used that $\theta \leq 1$ if $L\geq L_6$ and that $\Phi_{N^{\prime}} \leq \Phi_{N^{\prime\prime}}$ and $\Psi_{N^{\prime}}\leq \Psi_{N^{\prime\prime}}$
if $N^{\prime}\leq N^{\prime\prime}$.
\end{proof}

\section{Proof of Proposition \ref{prop:phiev}}

Now we are ready to prove Proposition \ref{prop:phiev}, and hence Theorem \ref{thm:phibound}, in view of the argument
in Section \ref{sec:proofthmphibd}.
\begin{proof}
  Let $C_2$, $C_{11}$, $L_3$ and $L_6$ be the constants guaranteed by Propositions \ref{prop:prelimphibound} and \ref{prop:wklest}, and assume 
  $L \geq L_2 \dfn \max\{L_3, L_6\}$ initially.
  Then, by \eqref{eq:prelimphibound}, we have
\begin{equation}\label{eq:prelim}
 \left(\pdt-\Delta\right)\Phi_N \leq -\frac{3}{2}\Psi_{N} + 
    \sum_{1\leq k + l \leq N}\frac{C_2\theta\mc{S}_{k, l}}{[k-1]^2}
      + C_2M_0\phi_{0, 0}.
\end{equation}
on $M\times [0, \Omega]$. 
Now, since $\mc{S}_{k, l} \dfn \mc{U}_{k, l} + \mc{V}_{k, l} + \mc{W}_{k, l}$, from equations \eqref{eq:uklest},
\eqref{eq:vklest} and \eqref{eq:wklest}, it follows that
\begin{align*}
  &\sum_{m=1}^N\sum_{k=0}^{m}\frac{C_2\theta\mc{S}_{k, m-k}}{[k-1]^2} 
    = \sum_{m=1}^N\sum_{k=0}^{m}\frac{C_2\theta}{[k-1]^2}\left(\mc{U}_{k, m-k} + \mc{V}_{k, m-k} + \mc{W}_{k, m-k}\right)\\
    &\qquad\qquad\leq C_2 C_4 \theta X_N \Phi_N^2 + C_2C_5\theta^2\Phi_{N}\Psi_N + C_2\left(C_{11}\theta^2(\Phi^2 + 1) 
      + \frac{1}{2C_2}\right)\Psi_N. 
\end{align*}
In view of our choice of $L_6$ from Proposition \ref{prop:wklest} (made via Lemma \ref{lem:invmetbounds}), 
we have $X_N \leq C$ for some $C =C(n)$,
Thus, for sufficiently large $C^{\prime}$, we'll have
\[
 \sum_{m=1}^N\sum_{k=0}^{m}\frac{ C_2\theta\mc{S}_{k, m-k}}{[k-1]^2} \leq C^{\prime}\theta\Phi_N^2 + C^{\prime}\theta^2(\Phi_N^2 + 1)\Psi_N 
+ \frac{1}{2}\Psi_N.
\]
Combined with \eqref{eq:prelim}, this yields
\begin{align*}
 \left(\pdt-\Delta\right)\Phi_N &\leq -\Psi_N + C^{\prime}\theta \Phi_N^2 + C^{\prime}\theta^2(\Phi_N^2 + 1)\Psi_N  + C_2M_0\phi_{0, 0}\\
	&\leq -\Psi_N(1 - C^{\prime}\theta^2(\Phi^2_N + 1)) +C^{\prime\prime}(\theta + 1)(M_0^2+1)(\Phi_N^2 + 1).
\end{align*}
Since $L \geq L_2$, we have $\theta \leq 1$ in particular, so 
\[
 \left(\pdt-\Delta\right)\Phi_N \leq -\Psi_N(1 - C_1\theta^2(\Phi^2_N + 1)) +C_1(M_0^2+1)(\Phi_N^2 + 1),
\]
for a sufficiently large $C_1 = C_1(n)$, for all $L \geq L_2$.
\end{proof}

\section{Local space-time analyticity of the metric in normal coordinates.}

We now adopt the setting of Theorem \ref{thm:spacetimeanalyticity}.  Our aim is to show that, on a sufficiently small
neighborhood of $(x_0, t_0) \in M\times (0, \Omega)$, there are constants $P$ and $Q$ such that 
 the representation $g_{ij}$ of the metric in the coordinates $x$
from the statement of that theorem satisfies
\begin{equation*}\label{eq:metcoordest}
  \left|\frac{\partial^{|\alpha|}\partial^{l}}{\partial x^{\alpha}\partial t^l}g_{ij}\right|\leq PQ^{|\alpha|/2+l}(|\alpha|+l)!
\end{equation*}
for all $l \in \NN_0$ and multi-indices $\alpha$.  On the coordinate neighborhood $U$ where we will be restricting
our attention, we have a natural reference euclidean metric $(g_E)_{ij} = \delta_{ij}$ and
we may 
regard partial differentiation as covariant differentiation relative to the flat connection
associated to $g_{E}$.
With this understanding, we may regard $\partial^{(k)}g$ on this neighborhood as a \emph{tensor} and seek to establish the analogous bounds
\begin{equation}\label{eq:metcoordest2}
 \left|\dt{l}\partial^{(k)}g\right|_{g_E} \leq PQ^{k/2 + l}(k + l)!
\end{equation}
on its time derivatives. To obtain these bounds we will need to convert between estimates involving the three connections
relevant to our problem: $\nabla = \nabla_{g(t)}$, $\delb = \nabla_{g(t_0)}$, and $\partial$, and we will prove
two general propositions in Section \ref{sec:diffconn} to aid us in this effort before embarking on the proof of 
Theorem \ref{thm:spacetimeanalyticity} in Section \ref{sec:spacetimeproof}.

 Before doing so, we point out that, since the conclusion of Theorem \ref{thm:timeanalyticity} is tensorial, 
 we already have the necessary estimates on the pure time derivatives of the metric.  In fact, for some $K_0$ and $L_0$,
\begin{equation}\label{eq:puretimederivs}
      \left|\dt{l} g_{ij}(x, t)\right|_{g_{E}} \leq K_0 \left(\frac{L_0}{t_0 - \eta}\right)^{l}l!
\end{equation}
on $W\times (t_0 -\eta, t_0+\eta)$ for \emph{any} fixed  coordinates on a precompact open set $W$ with $\overline{W}\subset U$
and sufficiently small $\eta > 0$.  (The restriction to $W$ is made so that $g(t)$ and $g_E$
will be uniformly equivalent on $W\times [t_0 - \eta, t_0 + \eta]$.) 
Accordingly, we could appeal now to a classical theorem of Browder \cite{Browder} 
(cf. Theorem 4.33, \cite{KrantzParks}) which provides a condition under which a smooth function
that is separately real-analytic in each of its variables will be fully analytic on its domain.
On account of the uniformity of the estimate \eqref{eq:puretimederivs} in $x$ and $t$, to prove 
Theorem \ref{thm:spacetimeanalyticity},
it would be enough, by Browder's theorem, to obtain a further uniform estimate on the pure spatial (coordinate) derivatives 
$\partial^{(k)}g$ of $g$ over $W\times [t_0-\eta, t_0+\eta]$ (in fact, only on the pure iterated derivatives
$\partial^k_{x^1} g$, $\partial^k_{x^2}g$, etc.).  Given the estimates we have already proven
in Theorem \ref{thm:mainest}, however, such a reduction will spare us little additional effort and instead we continue
to estimate all of the derivatives directly in order that our proof might be fully self-contained.

\subsection{Comparing the difference of connections}\label{sec:diffconn}

In this section we temporarily depart from the setting of the Ricci flow
to establish two general quantitative results that measure the effect that a change
of connection has on derivative estimates of the form \eqref{eq:mainestvar}.  The first
compares two fixed connections whose difference is known to satisfy
some derivative estimates of the same general form as the tensor being estimated.

\begin{proposition}\label{prop:connchange} Suppose that $g$ and $\bar{g}$ are two Riemannian metrics on a manifold $M$ with
Levi-Civita connections $\nabla$ and $\delb$, respectively.  Let $G \dfn \nabla - \delb$, i.e., 
$G_{ij}^k = \Gamma_{ij}^k - \Gamb_{ij}^k$. Let  $U\subset M$ be a precompact open set and $T$ a smooth tensor
of rank $\rho$ on $U$.  Given constants $P$, $\tilde{P}$, and $Q$ such that
\begin{equation}\label{eq:test}
      \sup_{x\in U}|\nabla^{(k)}T(x)| \leq PQ^k(k-2)!
\end{equation}
for all $0 \leq k \leq m$, and \emph{either}
\begin{equation}\label{eq:connest}
  \sup_{x\in U}|\nabla^{(k)}G(x)| \leq \tilde{P}Q^k(k-2)!\quad\mbox{ or }\quad  \sup_{x\in U}|\delb^{(k)}G(x)| \leq \tilde{P}Q^k(k-2)!
\end{equation}
for all $0\leq k \leq m$, there exists a constant $S$ depending only on $n$, $\rho$, $\tilde{P}$, and $Q$
such that
\begin{equation}\label{eq:newtest}
      \sup_{x\in U}|\delb^{(k)}T(x)| \leq PS^{k}(k-2)!.
\end{equation}
for all $0\leq k \leq m$.  Here $|\cdot| = |\cdot|_{g}$, however, a similar estimate holds instead with
$|\cdot|_{\bar{g}}$ in view of the uniform equivalence of $g$ and $\bar{g}$ on $U$.
\end{proposition}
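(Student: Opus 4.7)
The proof proceeds by first establishing an algebraic expansion of $\bar{\nabla}^{(k)} T$ in terms of $\nabla^{(j)} T$ and iterated derivatives of $G$, and then bounding the expansion termwise using the factorial-lag machinery of Section \ref{sec:combin}. The basic relation is that for any tensor $X$ of rank $\sigma$, the difference $(\bar{\nabla} - \nabla)X$ is a contraction of at most $\sigma$ terms of $G$ with $X$, i.e., $(\bar{\nabla} - \nabla) X \lt \sigma\, G \ast X$. Iterating this identity together with the Leibniz rule for $\bar{\nabla}$ yields, by induction on $k$, an expansion of the form
\begin{equation*}
\bar{\nabla}^{(k)} T \lt \sum_{\substack{s,\, p \geq 0 \\ i_1+\cdots+i_s+s+p = k}} c_k(s,p,\vec{i})\;\bar{\nabla}^{(i_1)} G \ast \cdots \ast \bar{\nabla}^{(i_s)} G \ast \nabla^{(p)} T,
\end{equation*}
where the coefficients $c_k(s, p, \vec{i})$ are read off from the recursion: at each step, applying $\bar{\nabla}$ either (a) advances some $\bar{\nabla}^{(i_r)} G$ to $\bar{\nabla}^{(i_r+1)} G$, (b) advances $\nabla^{(p)} T$ to $\nabla^{(p+1)} T$, or (c) spawns a fresh $G$-factor from the correction $\bar{\nabla}\nabla^{(p)} T = \nabla^{(p+1)} T - G\ast \nabla^{(p)} T$. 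If instead one has the $\nabla^{(i)} G$ hypothesis, one first applies the same expansion argument to $T = G$ to convert it into a $\bar{\nabla}^{(i)} G$ bound of the same form (with enlarged constants depending on $n$, $\tilde{P}$, $Q$), or one uses the analogous expansion with $\nabla^{(i_r)} G$ in place of $\bar{\nabla}^{(i_r)} G$.

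Substituting the hypothesized bounds $|\nabla^{(p)} T|\leq P Q^p (p-2)!$ and $|\bar{\nabla}^{(i_r)} G| \leq \tilde{P} Q^{i_r}(i_r-2)!$ reduces the task to estimating, for each $s$, the multi-index sum
\begin{equation*}
\sum_{i_1+\cdots+i_s+p = k-s} c_k(s,p,\vec{i})\, (i_1-2)!\cdots(i_s-2)!(p-2)!.
\end{equation*}
An iterated application of Lemma \ref{lem:comb1} (or, equivalently, the product extension of Lemma \ref{lem:prodsum}) bounds the bare factorial-lag products by $C_0^{s+1}(k-s-2)!$, and a careful examination of the coefficients $c_k(s,p,\vec{i})$ shows that they remain absorbable into a single geometric factor per $G$-insertion. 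Summing over $s$ then yields a geometric series in $\tilde{P}/Q$ times a constant, producing the bound $|\bar{\nabla}^{(k)} T|\leq P S^k (k-2)!$ for $S = S(n,\rho,\tilde{P},Q)$ sufficiently large.

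The main technical obstacle lies in controlling the coefficients $c_k(s,p,\vec{i})$, which are \emph{a priori} in danger of blowing up factorially with $k$: each time $\bar{\nabla}$ is applied to the $\nabla^{(p)} T$ factor, the accompanying $-G\ast$ correction carries a combinatorial weight equal to the rank of the tensor, $\rho + p$, and this rank grows at every step. The two-slot factorial lag in the hypothesis and conclusion is precisely what absorbs this growth, in the same way the lag made Lemmas \ref{lem:comb1} and \ref{lem:prodsum} work. Finally, the uniform equivalence of $g$ and $\bar{g}$ on the precompact set $U$ transfers all norm estimates between $|\cdot|_g$ and $|\cdot|_{\bar{g}}$ up to a bounded multiplicative constant, which is absorbed into the final constant $S$.
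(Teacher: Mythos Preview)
Your strategy of fully expanding $\bar\nabla^{(k)}T$ into monomials in $\bar\nabla^{(i_r)}G$ and $\nabla^{(p)}T$, then bounding termwise, is a plausible alternative to what the paper does, but the proof as written has a genuine gap at precisely the point you flag as ``the main technical obstacle.'' You assert that ``a careful examination of the coefficients $c_k(s,p,\vec{i})$ shows that they remain absorbable into a single geometric factor per $G$-insertion,'' but you never carry this out. The danger is real: each spawning event (c) contributes a rank factor $\rho+p$, and since $p$ can be as large as $k-1$, a crude count of the total mass of coefficients gives something on the order of $(\rho+1)(\rho+2)\cdots(\rho+k)\sim k!$, which is too large by a full factorial. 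One needs a mechanism that matches each such rank factor against the corresponding gain from the factorial lag, and you have not exhibited one; the appeal to Lemma~\ref{lem:comb1} handles the \emph{unweighted} multi-index factorial sums but says nothing about the growth of the $c_k$. There is also a circularity in how you treat the two alternatives in \eqref{eq:connest}: your expansion naturally uses $\bar\nabla^{(i)}G$, and your proposal to reduce the $\nabla^{(i)}G$ case to this one ``by applying the same expansion argument to $T=G$'' would already require the proposition for $G$ under the $\bar\nabla$-hypothesis, which is exactly what is unavailable.

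The paper avoids the coefficient problem entirely by a double induction on the mixed quantities $\nabla^{(k-l)}\bar\nabla^{(l)}T$ (for the first alternative in \eqref{eq:connest}) and $\bar\nabla^{(l)}\nabla^{(k-l)}T$ (for the second). The point is that peeling off a \emph{single} $\bar\nabla$ from $\nabla^{(r-s)}\bar\nabla^{(s)}T$ produces only one rank factor $(s-1+\rho)$, together with a single binomial sum over $\nabla^{(q)}G\ast\nabla^{(r-s-q)}\bar\nabla^{(s-1)}T$, both of which are already controlled by the inductive hypothesis. The explicit combinatorial check that $(s-1+\rho)\binom{r-s}{q}(q-2)!(r-q-3)!/(r-2)!$ is summable in $q$ uniformly in $r,s$ is the content of the paper's estimate on the quantity $F(q,r,s;\rho)$, and this is where the factorial lag is actually used. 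Organizing the argument as a step-by-step induction, rather than a full expansion, is what keeps the rank factors from accumulating. If you want to salvage your approach, you would need to supply an honest majorant-series or generating-function argument showing that the weighted sum $\sum c_k(s,p,\vec{i})(i_1-2)!\cdots(p-2)!$ really is bounded by $C^k(k-2)!$; this is likely possible but is not a consequence of the lemmas you cite.
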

\begin{remark}
 A key requirement for our specific application of this result in the proof of Theorem \ref{thm:spacetimeanalyticity} is that
 \eqref{eq:test} and \eqref{eq:newtest} hold with the \emph{same} $P$ and, especially, that $Q$ is independent of $P$.
 We will be taking as our $T$ various time-derivatives $\dt{l}R$, for which $P$ will depend on $l$, and if this requirement is met,
 the estimates
 for $\delb^{(k)}\dt{l}R$ supplied by this proposition will retain dependencies on $k$ and $l$ of the appropriate form.
\end{remark}

\begin{proof} We begin by assuming that the first alternative in \eqref{eq:connest} is satisfied. 
We will use a double induction argument to prove the following statement.

\noindent{\it Claim: There exists $S = S(n, \rho, \tilde{P}, Q)$ such that
\begin{equation}\label{eq:mixedconnest}
   \sup_{x\in U}|\nabla^{(k-l)}\delb^{(l)} T(x)| \leq P\hat{Q}^{k-l}S^{l}(k-2)!
\end{equation}
 for all $0\leq l\leq k\leq m$, where $\hat{Q} = 3Q$.}  

We will wait to specify $S$ until the end of the argument; for now we just regard it as a large parameter satisfying
 $S\geq Q$.
First note that \eqref{eq:mixedconnest} follows trivially in the case $k=0$ since we have $|T|\leq P$ by \eqref{eq:test}.
Suppose now that $S$ has been found so that, for some $r\leq m$, \eqref{eq:mixedconnest} holds for all $0 \leq l \leq k < r$.
We argue by induction on $l$ to establish the case $k=r$.  Again, the case $l = 0$ is an immediate consequence of \eqref{eq:test}
since $\hat{Q}\geq Q$.
So suppose that \eqref{eq:mixedconnest} holds for all $0\leq l < s$ for some $s\leq k= r$.  We consider the case $l =s\geq 1$.
Observe that
\begin{align}\begin{split}\label{eq:diffschem}
 \nabla^{(r-s)}\delb^{(s)}T &= \nabla^{(r-s+1)}\delb^{(s-1)}T + \nabla^{(r-s)}\left((\delb -\nabla)\delb^{(s-1)}T\right)\\
			   &\lt \nabla^{(r-s+1)}\delb^{(s-1)}T + (s-1 +\rho)\nabla^{(r-s)}\left(G\ast\delb^{(s-1)}T\right)\\
			   &\lt \nabla^{(r-s+1)}\delb^{(s-1)}T\\
      &\phantom{\lt}
      + (s - 1 +\rho)\sum_{q=0}^{r-s}\binom{r-s}{q}\left(\nabla^{(q)}G\ast\nabla^{(r-s- q)}\delb^{(s-1)}T\right),
\end{split}
\end{align}
and so, using the induction hypothesis and \eqref{eq:connest}, we have
\begin{align}
\begin{split}\nonumber
  \left|\nabla^{(r-s)}\delb^{(s)}T \right| &\leq P\hat{Q}^{r-s+1} S^{s-1}(r-2)!\\
    &\phantom{\leq}+ (s -1 + \rho)\sum_{q=0}^{r-s}\binom{r-s}{q}P\tilde{P}Q^q\hat{Q}^{r-s-q}S^{s-1}
      (q-2)!(r-q -3)!
\end{split}\\
\begin{split}\label{eq:indhypprelim}
  &\leq P\hat{Q}^{r-s} S^{s}(r-2)!\left\{\frac{\hat{Q}}{S} + \frac{\tilde{P}}{S}\sum_{q=0}^{r-s} \frac{F(q, r, s; \rho)}{3^{q}}\right\},
\end{split}
\end{align}
where
\[
  F(q, r, s; \rho)\dfn \frac{s-1+\rho}{(r-2)!}\binom{r-s}{q}(q-2)!(r- q -3)! = \frac{(s-1+\rho)[r-s]_q}{[q]_2[r-2][r-3]_q}.  
\]
Now, since $1 \leq s \leq r$, we have $(s-1 +\rho)\leq \rho[r]$, and also
\[
      \frac{[r-s]_q}{[r-3]_q} \leq \frac{[r-1]_q}{[r-3]_q} 
= \frac{[r-1]}{[r-3]}\cdot \frac{[r-2]}{[r-4]} \cdots \frac{[r-q]}{[r-2-q]}
\leq 3^q,
\]
and hence
\begin{align*}
    F(q, L, N; \rho) \leq \frac{\rho [r]3^q}{[q]_2[r-2]}\leq \frac{2\rho3^q}{[q]_2}.
\end{align*}
Thus, we may return to \eqref{eq:indhypprelim} to obtain
\begin{align*}
 \frac{\left|\nabla^{(r-s)}\delb^{(s)}T \right|}{ P\hat{Q}^{r-s} S^{s}(r-2)!}
  &\leq \left(\frac{\hat{Q}}{S} + \frac{2\rho \tilde{P}}{S}\sum_{q=0}^{r-s}\frac{1}{[q]_2}\right)\\
  &<  \frac{3Q}{S} + \frac{C\tilde{P}\rho}{S}.
\end{align*}
for some universal constant $C$. Provided $S$ is taken larger still to ensure that $S > \max\{6Q, 4C\tilde{P}\rho\}$, the right-hand side
of the inequality
will be less than $1$.  This implies the desired estimate for the case $l=s$,
and, by induction on $l$, proves the desired estimate for the case $k=r$.  Thus, by induction on $k$, the claim is proven
for all $0\leq l \leq k \leq m$.

The argument for the case of the second alternative in \eqref{eq:connest} is very similar to the one we have given
above, and we will only give a sketch.
The idea is to work ``inside out'' relative to the above argument and prove instead the following statement.

\noindent{\it Claim: There exists $S = S(n, \rho, \tilde{P}, Q)$ such that
\begin{equation*}
   \sup_{x\in U}|\delb^{(l)}\nabla^{(k-l)} T(x)| \leq PS^{l}Q^{k-l}(k-2)!
\end{equation*}
 for all $0\leq l\leq k\leq m$.} 

This can be proven by another double induction argument on $k$ and $l$;
the analog of \eqref{eq:diffschem} in this case is
\[
    \delb^{(s)}\nabla^{(r-s)} T \lt \delb^{(s-1)}\nabla^{(r-s+1)}T + (r-s + \rho) \delb^{(s-1)}(G\ast\nabla^{(r-s)}T).
\]
Since the covariant derivatives landing on the difference of connections $G$ will now be taken with respect to the connection
$\delb$, one can use the rightmost assumption from \eqref{eq:connest} together with \eqref{eq:test}
and proceed as before.
\end{proof}

The second result provides an estimate on the derivatives of the 
difference of the Levi-Civita connections of a smooth family of metrics
$g(t)$ at two different times, when the time-derivative of this family is known
to satisfy uniform bounds on its spatial derivatives akin to \eqref{eq:mainestvar}.

\begin{proposition}\label{prop:timeconndiff}
Suppose that $g(t)$ is a smooth family of metrics defined on some open set $U\subset M$ for $t\in [0, \Omega]$ and $V\subset U$ is any precompact open set
with closure contained in $U$. Define $\bar{g} \dfn g(0)$, let $\nabla$ and $\delb$ denote the Levi-Civita connections of $g$ and $\bar{g}$, respectively, and 
define $G \dfn \nabla -\delb$.
 If there are constants $P$ and $Q$ such that
\begin{equation}\label{eq:best}
      \sup_{U\times [0, \Omega]}|\nabla^{(k)}b|\leq P Q^{k/2}(k-2)!
\end{equation}
for all $k \in \NN_0$, where $b(t) \in C^{\infty}(T_2(M))$ is defined by $\dt{} g = -2b$, then there exist constants 
$\epsilon\in (0, \Omega)$, $\tilde{P}$, and $\tilde{Q}$ depending only on $n$, $P$, $Q$, and $\Omega$ such that
\begin{equation}\label{eq:conder}
  \sup_{V\times [0, \epsilon]}\left|\nabla^{(k)}G\right|\leq \tilde{P}\tilde{Q}^{k/2}(k-2)!
\end{equation}
for all $k\in \NN_0$.
\end{proposition}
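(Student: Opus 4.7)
The strategy is to induct on the order of derivatives while exploiting two structural facts. First, $G(\cdot,0) \equiv 0$, since $\bar{g} = g(0)$ and hence $\bar{\Gamma} = \Gamma(0)$. Second, under the variation $\partial_t g = -2b$ the Christoffel symbols evolve by
\[
\partial_t \Gamma_{ij}^k = -g^{kp}\bigl(\nabla_i b_{jp} + \nabla_j b_{ip} - \nabla_p b_{ij}\bigr),
\]
and since $\bar{\Gamma}$ is $t$-independent this gives $\partial_t G \preceq h \ast \nabla b$. Integrating in time yields $G(x,t) = \int_0^t \partial_s G(x,s)\,ds$, and because $\delb$ is $t$-independent it commutes with the integral, producing
\[
\delb^{(k)} G(x,t) \preceq \int_0^t \delb^{(k)}\bigl(h \ast \nabla b\bigr)(x,s)\,ds.
\]
Thus $\delb^{(k)}G$ is of order $t$, controlled by $\delb$-derivatives of $h$ and of $\nabla b$ of orders at most $k+1$; the factor of $t$ obtained here is what will absorb the circular terms arising from the bootstrap.

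The induction hypothesis at stage $k$ is that $|\nabla^{(j)} G| \leq \tilde P \tilde Q^{j/2}(j-2)!$ on $V \times [0,\epsilon]$ for all $j \leq k-1$. To promote this to stage $k$, I would first apply Proposition \ref{prop:connchange} with $T = b$ to convert \eqref{eq:best} into $|\delb^{(j)} b| \leq P_1 Q_1^{j/2}(j-2)!$ for $j \leq k$; the bound on $\nabla^{(j)} G$ required as input is exactly the induction hypothesis (together with the uniform equivalence of $g$ and $\bar{g}$ on $V\times[0,\epsilon]$, which follows from $\partial_t g = -2b$ and $|b| \leq P$). Analogous bounds on $\delb^{(j)} h$ are then obtained from $h^{ik}g_{kj} = \delta^i_j$ exactly as in Lemma \ref{lem:invmetbounds}, combined with the identity $\delb g(t) = -2\int_0^t \delb b(s)\,ds$ (valid since $\delb g(0) = 0$ and $\delb$ is time-independent). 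With these in hand, I would expand $\delb^{(k)}(h \ast \nabla b)$ by Leibniz, use $\nabla b = \delb b + G \ast b$ to eliminate occurrences of $\nabla$, and estimate each resulting product sum by Lemma \ref{lem:prodsum}, with Lemma \ref{lem:comb1} controlling the combinatorial factors via factorial lag.

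The essential point is that every term in the estimate for $|\delb^{(k)} G|(x,t)$ carries the prefactor $\int_0^t ds = t$. Taking the supremum over $V \times [0,\epsilon]$ yields a bound of the form $|\delb^{(k)} G| \leq C_0\epsilon \cdot \tilde P \tilde Q^{k/2}(k-2)!$ where $C_0$ depends only on $n$, $P$, $Q$, and $\Omega$ and, crucially, not on $k$. Choosing $\epsilon$ small enough that $C_0\epsilon \leq 1/2$ closes the induction for $\delb$-derivatives of $G$. A single final application of Proposition \ref{prop:connchange} with $T = G$, now using the $\delb$-bounds just established, converts these to the $\nabla$-derivative bounds claimed in \eqref{eq:conder}, at the expense of enlarging $\tilde Q$ by a fixed factor.

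The main obstacle is the apparent circularity of the bootstrap: Proposition \ref{prop:connchange} requires pre-existing bounds on the difference of connections in order to interchange $\nabla$- and $\delb$-derivatives of other tensors, and yet $G$ is precisely what we are trying to estimate. What breaks the circle is the vanishing $G(\cdot,0) \equiv 0$, which forces the time-integral representation and so endows every step of the induction with an extra factor of $t$; this factor can be made arbitrarily small by restricting to $[0,\epsilon]$ and thereby absorbs the $k$-independent constants produced by the circular dependence. Threading the factorial-lag structure through the various Leibniz expansions and verifying that $\tilde P$ and $\tilde Q$ come out independent of $k$ is tedious bookkeeping, but, given the combinatorial machinery of Section \ref{sec:combin}, essentially routine.
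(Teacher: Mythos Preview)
Your strategy shares its starting point with the paper --- exploit $G(\cdot, 0) \equiv 0$ and the evolution $\partial_t G \preceq h \ast \nabla b$ --- but then diverges substantially. You pass to $\delb$-derivatives (so that differentiation commutes with $\int_0^t$) and rely on Proposition \ref{prop:connchange} to shuttle between $\nabla$ and $\delb$; the paper instead stays with $\nabla$-derivatives throughout and controls the weighted sum $F_N = \sum_k \eta^k|\nabla^{(k)}G|^2/((k-1)!)^2$, with $\eta = \epsilon/(2Q(t+\epsilon))$, by a direct ODE estimate on $\partial_t F_N$. The choice of $\nabla$ is not incidental: since $\nabla h = 0$, one has simply $\nabla^{(k)}(\partial_t G) \preceq 3\,h \ast \nabla^{(k+1)} b$, bounded \emph{directly} by \eqref{eq:best} with no conversion needed, while the commutator $[\partial_t, \nabla^{(k)}]G$ (via \eqref{eq:bcomm}) involves only $\nabla^{(j)}G$ for $j \leq k-1$. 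Thus no analogue of Proposition \ref{prop:connchange} is ever invoked, and the differentiation of $\eta^k$ supplies the good term $-k/(t+\epsilon)\,f_k$ that swallows everything else once $\epsilon$ is small.

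Your route, by contrast, has an off-by-one gap at the top order. After expanding $\delb^{(k)}(h\ast\nabla b)$ by Leibniz and writing $\nabla b = \delb b + G \ast b$, the leading term is $h \ast \delb^{(k+1)} b$. You state that Proposition \ref{prop:connchange} yields $|\delb^{(j)}b|$ for $j \leq k$ from the induction hypothesis on $\nabla^{(j)}G$, $j \leq k-1$; that is correct, but you actually need $j = k+1$, and Proposition \ref{prop:connchange} at that order requires $G$-bounds up to order $k$, precisely the quantity under construction. This term carries no factor of $G$, so it is not among the ``circular'' contributions your $t$-prefactor is supposed to absorb. The gap is probably repairable --- e.g.\ by substituting $\delb^{(k+1)}b = \nabla^{(k+1)}b + (\text{corrections in }\delb^{(j)}G,\ j\leq k)$ so that the only genuinely new unknown on the right is again $\delb^{(k)}G$, which \emph{does} pick up the factor $t$ --- but this is exactly the top-order bookkeeping the paper's choice of $\nabla$ over $\delb$ is engineered to sidestep, and your sketch does not carry it out.
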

\begin{proof} Observe first that $\nabla^{(k)}G(x, 0)\equiv 0$ for all $k$, and recall the identity
\begin{equation}\label{eq:gder}
  \pdt G_{ij}^k = g^{mk}\left\{\nabla_m b_{ij} -\nabla_ib_{jm} -\nabla_{j}b_{im}\right\}.
\end{equation}
For the time being, let $0<\epsilon< \Omega$ denote a small constant -- we will later specify
its dependency on the parameters $P$ and $Q$ --
and, for $k$, $N\in \NN_0$, define the functions 
\[
    \eta \dfn \frac{\epsilon}{2Q(t+\epsilon)},\quad f_k \dfn \frac{\eta^k}{((k-1)!)^2}|\nabla^{(k)}G|^2,\quad F_N \dfn \sum_{k=0}^N f_k,\quad\mbox{and}\quad E_N \dfn \sum_{k=1}^{N}[k]f_k.
\]
Then, as $\pdt h^{ij} = 2h^{ik}h^{jl}b_{kl}$, and since $\nabla^{(k)}G$ has rank $k+3$, we have
\begin{align}\label{eq:fkev}
  \pd{f_k}{t} &\leq -\left(\frac{k}{t+\epsilon}- 2(k+3)|b|\right)f_k + \frac{2\eta^k}{((k-1)!)^2}\left\langle\pdt\nabla^{(k)}G, \nabla^{(k)}G\right\rangle.
\end{align}
for each $k$. 

 For $k = 0$, using \eqref{eq:best} and \eqref{eq:gder} with \eqref{eq:fkev}, we have
\begin{align}\label{eq:f0ev}
  \pd{f_0}{t} &\leq (6P +1)f_0 + \left|\pd{G}{t}\right|^2 \leq (6P+1) f_0 + 9PQ^{1/2},
\end{align}
which, since $f_0(x, 0) = 0$, implies $f_0 \leq P^{\prime}$ on $V\times [0, \Omega]$ for some $P^{\prime}= P^{\prime}(P, Q, \Omega^*)$.

For $k > 0$, we start by estimating the inner product in \eqref{eq:fkev} by Cauchy-Schwarz:
\begin{align}\label{eq:innerprodest}
  \frac{2\eta^k}{((k-1)!)^2}\left\langle\pdt\nabla^{(k)}G, \nabla^{(k)}G\right\rangle
    &\leq [k]f_k + \frac{\eta^k}{k!(k-1)!}\left|\pdt\nabla^{(k)}G\right|^2.
\end{align}
Now, with a trivial adjustment to its proof, the formula given in Lemma 13.26 of \cite{RFV2P2} for the 
commutator, $\left[\pdt, \nabla^{(k)}\right]$, relative to a solution of Ricci flow, can be converted into a formula for the same 
commutator relative to our arbitrary smooth family of metrics $g(t)$.  From this formula we obtain that if $T = T(t)$ is a smooth family of tensors of rank $\rho$, then
\begin{equation}\label{eq:bcomm}
    \left|\left[\pdt, \nabla^{(k)}\right] T\right| \leq 3(\rho+1)\sum_{i=1}^k\binom{k+1}{i+1} \left|\nabla^{(i)}b\right|\left|\nabla^{(k-i)}T\right|. 
\end{equation}
Using \eqref{eq:best} and \eqref{eq:gder}, we can then  apply \eqref{eq:bcomm} to $G$ to obtain that 
\begin{align*}
   &\left|\pdt\nabla^{(k)} G\right| \leq \left|\nabla^{(k)}\pd{G}{t}\right| + 12\sum_{i=1}^k\binom{k+1}{i+1} \left|\nabla^{(i)}b\right|\left|\nabla^{(k-i)}G\right|\\
    &\quad\leq 3\left|\nabla^{(k+1)} b\right| + 12\sum_{i=1}^k\binom{k+1}{i+1} \frac{PQ^{i/2}(i-2)!(k-i -1)!}{\eta^{(k-i)/2}}f_{k-i}^{1/2}\\
    &\quad\leq 3PQ^{(k+1)/2}(k-1)! 
  + \frac{12P(k+1)!}{\eta^{k/2}}\sum_{i=1}^k\left(\frac{\epsilon^{i}[k-i] f_{k-i}}{2^i(t+\epsilon)^i[i+1]_3^2[k-i]^{3/2}}\right)^{1/2}.
\end{align*}
Then, using Lemma \ref{lem:comb1} and assuming $0 < t \leq \epsilon$, we can estimate the sum in the second term by
\begin{align*}
  &\sum_{i=1}^k\left(\frac{\epsilon^i[k-i] f_{k-i}}{2^i(t+\epsilon)^i[i+1]_3^2[k-i]^{3/2}}\right)^{1/2}\\
  &\quad\leq C\left(\sum_{i=1}^k\frac{1}{[i]_6[k-i]_3}\right)^{1/2}\left(\sum_{i=1}^k\frac{[k-i]f_{k-i}}{2^i}\right)^{1/2}\\
  &\quad\leq \frac{C}{[k]_3^{1/2}}\left(\sum_{i=1}^k \frac{[k-i]f_{k-i}}{2^i}\right)^{1/2},
\end{align*}
and it follows that
\begin{align*}
 \left|\pdt\nabla^{(k)}G\right|^2  &\leq CP^2Q^{k+1}((k-1)!)^2 + \frac{CP^2((k+1)!)^2}{[k]_3\eta^{k}}
      \sum_{i=1}^k\frac{[k-i]f_{k-i}}{2^i}\\
    &\leq CP^2Q^{k+1}((k-1)!)^2 + \frac{CP^2 k!(k-1)! }{\eta^k}
      \sum_{i=1}^k \frac{[k-i]f_{k-i}}{2^i}.
 \end{align*}
Above, and in the inequalities to follow, $C$ represents a sequence of universal constants. Hence,
\begin{align*}
 \frac{\eta^k}{k!(k-1)!}\left|\pdt\nabla^{(k)}G\right|^2 &\leq \frac{CP^2Q^{k+1}}{[k]}\left(\frac{\epsilon}{2Q(t+\epsilon)}\right)^k  + 
	  CP^2\sum_{i=1}^k \frac{[k-i]f_{k-i}}{2^i}\\
      &\leq \frac{CP^2Q}{2^k} +
	  CP^2\sum_{i=1}^k \frac{[k-i]f_{k-i}}{2^i}.
\end{align*}
Combining this with \eqref{eq:fkev} and \eqref{eq:innerprodest}, we obtain that
\begin{align*}
 \pd{f_k}{t} &\leq -\left(\frac{k}{t+\epsilon}- 2(k+3)P - k\right)f_k + \frac{CP^2Q}{2^k}  + 
	  CP^2\sum_{i=1}^k\frac{[k-i]f_{k-i}}{2^i}\\
	  &\leq -\frac{k}{\epsilon}\left(\frac{1}{2} -\epsilon(8P+1)\right)f_k + \frac{CP^2Q}{2^k}  + 
	  CP^2\sum_{i=1}^k\frac{[k-i]f_{k-i}}{2^i}
\end{align*}
for $k > 0$.  Now,
\[
      \sum_{k=1}^N\sum_{i=1}^k \frac{[k-i]f_{k-i}}{2^i} 
\leq \left(\sum_{i=1}^{N}\frac{1}{2^{i}}\right)\left(\sum_{k=0}^{N-1} [k]f_k\right) \leq E_N +f_0 ,
\]
so, summing over $k$ from $0$ to $N$, and using also \eqref{eq:f0ev},
we find that
\begin{align*}
 \pd{F_N}{t} &= \pd{f_0}{t} + \sum_{k=1}^N\pd{f_k}{t}\\
    &\leq (6P+ CP^2 + 1) f_0 + 9PQ^{1/2} + P^2Q - \epsilon^{-1}\left(\frac{1}{2} -\epsilon\left(8P+1 - CP^2\right)\right)E_N \\
      &\leq P^{\prime\prime} - \epsilon^{-1}\left(\frac{1}{2} -\epsilon P^{\prime\prime}\right)E_N
\end{align*}
for some sufficiently large constant $P^{\prime\prime}$ depending on $P$, $Q$ and $\Omega^*$ (the latter
through the bound $|f_0|\leq P^{\prime}$ found above).  Thus, if $0 < \epsilon < 1/(2P^{\prime\prime})$,
then, since $F_N(x, 0) = 0$, we have $F_N \leq \epsilon P^{\prime\prime}< \frac{1}{2}$ on $V\times [0, \epsilon]$,
and hence for each $0 \leq k \leq N$,
\[
    \left|\nabla^{(k)}G\right| \leq \left(\frac{2Q(t+\epsilon)}{\epsilon}\right)^{k/2}\frac{(k-1)!}{2}\leq 2^{k/2-1}Q^{k/2}(k-1)! \leq \tilde{P}\tilde{Q}^{k/2}(k-2)!
\]
on $V\times [0, \epsilon]$ for suitable $\tilde{P}$ and $\tilde{Q}$.
\end{proof}

\subsection{Proof of Theorem \ref{thm:spacetimeanalyticity}}\label{sec:spacetimeproof}
We now bring Propositions \ref{prop:connchange} and \ref{prop:timeconndiff} together to prove the main result of the section.

\begin{proof}[Proof of Theorem \ref{thm:spacetimeanalyticity}]
 Let $g(t)$ be a complete solution to \eqref{eq:rf} of uniformly bounded curvature on $M\times [0, \Omega]$ and
 fix $t_0\in  (0, \Omega)$. Define $\bar{g} \dfn g(t_0)$, $\epsilon_0 \dfn \min\{t_0, \Omega-t_0\}/2$, and let
$x^i: U\subset M\to \RR{n}$ be smooth coordinates for which the coordinate representation of $\bar{g}$, i.e., $\bar{g}_{ij}$, 
belongs to $C^{\omega}(U)$. In what follows we will regard partial differentiation in these variables 
as covariant differentiation relative
to the associated Euclidean metric $g_E$ on $U$, and view, e.g., $\partial^{(k)}g$  and $\partial^{(k)}\bar{g}$ as tensors on the open set
$U$.

Now fix an arbitrary $x_0\in U$ and select some precompact open set $\tilde{U}$ containing $x_0$ with 
$\tilde{U}\subset U$.  On $\tilde{U}\times [0, \Omega]$, the metrics $g_E$, $\bar{g}$, and $g(t)$
will all be uniformly equivalent. In particular, there exists a constant $\gamma$ such that the inequality
\[
	  \gamma^{-(a+b)}|T|^2_{\bar{g}} \leq |T|_{g(t)}^2 \leq \gamma^{(a+b)}|T|_{\bar{g}}^2, 
\]
holds for any permutation of $g_E$, $\bar{g}$, and $g(t)$ and any $(a, b)$-tensor $T$ on $\tilde{U}\times[0, \Omega]$.
 Since our estimates below will scale with the order
of the spatial derivative,  up to an increase in this scale factor, we may freely 
convert bounds given in terms of one norm to bounds given in terms of another.

From the estimates \eqref{eq:mainest} and the discussion at the beginning of Section \ref{sec:combin}
(in particular, the inequality \eqref{eq:reversefactineq}), it follows that on $\tilde{U}$,
there are constants $P_0$ and $Q_0$ such that
\begin{equation}\label{eq:nablametbounds}
     \sup_{\tilde{U}\times [t_0-\epsilon_0, t_0 + \epsilon_0]}\left|\nabla^{(k)}\dt{l}g\right|
  \leq P_0 Q_0^{k/2 + l}(k-2)!(l-2)!.
\end{equation}
If we define $G \dfn  \nabla - \delb$,
we can apply Proposition \ref{prop:timeconndiff} twice, first to $g_1(t) = g(t_0 - t)$, and then to
 $g_2(t) = g(t_0 + t)$ (both of which are defined on $\tilde{U}\times [0, \epsilon_0]$),  to obtain constants $P_1$ and $Q_1$ and an  
 $\epsilon_1 \in (0, \epsilon_0)$ such that
\begin{equation}\label{eq:gbound}
    \sup_{\tilde{U}\times [t_0-\epsilon_1, t_0 + \epsilon_1]}\left|\nabla^{(k)}G\right| \leq P_1 Q_1^{k/2}(k-2)!
\end{equation}
for any $k\in \NN_0$.
Then we can take \eqref{eq:nablametbounds} and \eqref{eq:gbound} together and apply Proposition \ref{prop:connchange} (with the first
alternative in \eqref{eq:connest}) on each time-slice of $\tilde{U}\times [t_0-\epsilon_1, t_0 + \epsilon_1]$ 
to the tensors $\dt{l}g$ for each $l$.  In these applications, we take the parameters
 $P = P(l)= P_0Q_0^l(l-2)!$, and $\tilde{P} = P_1$, $Q = \max\{Q_0, Q_1\}$, and obtain
$Q_2\geq Q_0$ depending only on $n$, $P_1$, $Q_0$, $Q_1$ (but not on $l$) such that
\[
     \sup_{\tilde{U}\times [t_0-\epsilon_1, t_0+\epsilon_1]} \left|\delb^{(k)}\dt{l} g\right| \leq P(l) Q_2^{k/2}(k-2)!
      \leq P_0 Q_2^{l+k/2}(k-2)!(l-2)!
\]
for all $k$, $l\in \NN_0$. 

Choosing $P_3\geq P_0$ and $Q_3\geq Q_2$ increased by a further factor depending on $\gamma$,
we then may replace the norm in this inequality by $|\cdot|_{\bar{g}}$, obtaining
\begin{equation}\label{eq:metdelbest}
\sup_{\tilde{U}\times [t_0-\epsilon_1, t_0+\epsilon_1]} \left|\delb^{(k)}\dt{l} g\right|_{g_{E}}  
\leq P_3Q_{3}^{k/2+l}(k-2)!(l-2)!
\end{equation}
for all $k$, $l\in \NN_0$.

It remains to translate these estimates into versions relative to the flat connection $\partial$, and it is here that we employ our assumptions on the coordinates $x$.
Note first that, since $\bar{g}_{ij}\in C^{\omega}(U)$, it follows that $\bar{g}^{ij}\in C^{\omega}(U)$, and that the tensor 
\[
    \overline{G}_{ij}^k = (\partial - \delb)_{ij}^k = -\overline{\Gamma}_{ij}^k = \frac{1}{2}\bar{g}^{mk}\left\{\partial_m \bar{g}_{ij} 
    - \partial_{i}\bar{g}_{jm} - \partial_{j}\bar{g}_{im}\right\}, 
\]
as a polynomial expression in $\bar{g}^{-1}$ and $\partial\bar{g}$, likewise belongs to $C^{\omega}(U)$. 
Thus, fixing a precompact neighborhood $V$
of $x_0$ with $\overline{V}\subset \tilde{U}$, we obtain constants $P_4$ and $Q_4$ such that
\begin{equation}\label{eq:gbarbounds}
      \sup_{x\in V}\left|\partial^{(k)}\overline{G}\right|_{\bar{g}} \leq P_4 Q_4^{k/2}(k-2)!
\end{equation}
for all $k\in \NN_0$.  We can then apply Proposition \ref{prop:connchange} again
to $\dt{l} g$ on each time slice of $V\times [t_0-\epsilon, t_0+\epsilon]$, this time with \eqref{eq:metdelbest} and the second
alternative in \eqref{eq:connest} (so $\nabla = \delb$ and $\delb =\partial$, in the notation of its statement).
It produces a constant $Q_5$ (which we may take to exceed $Q_3$), depending only on $n$, $P_4$, $Q_3$, and $Q_4$, such that
\[
    \sup_{V\times [t_0-\epsilon_1, t_0+ \epsilon_1]}
\left|\dt{l}\partial^{(k)} g\right|_{\bar{g}} \leq P_3Q_3^{l}Q_5^{k/2}(k-2)!(l-2)! \leq P_3 Q_5^{k/2+l}(k-2)!(l-2)!
\]
and we thus obtain, for $\epsilon = \epsilon_1$ and suitably larger $P$ and $Q$, that
\[
 \sup_{V\times [t_0-\epsilon, t_0+ \epsilon]}\left|\dt{l}\partial^{(k)}g\right|_{g_{E}} \leq PQ^{k/2+l}(k+l)!.
\]

So, at any $(x, t)\in V\times (t_0 -\epsilon, t_0 + \epsilon)$, for any multi-index $\alpha$ and $l\in \NN_0$,
the representation $g_{ij}$ of the metric $g$ satisfies
\[
 \left|\dt{l}\frac{\partial^{|\alpha|}}{\partial x^{\alpha}}g_{ij}\right| \leq
  \left|\dt{l}\partial^{(|\alpha|)}g\right|_{g_{E}} \leq PQ^{|\alpha|/2+l}(|\alpha|+l)!,
\]
and thus belongs to $C^{\omega}(V\times (t_0-\epsilon, t_0 +\epsilon))$.
\end{proof}

\begin{acknowledgement*}
	The author wishes to thank Professor Gerhard Huisken for useful discussions at the outset of this work.
\end{acknowledgement*}

\end{document}